\newcommand{\bbA}{{\mathbb A}}
\newcommand{\bbC}{{\mathbb C}}
\newcommand{\bbL}{{\mathbb L}}
\newcommand{\bbR}{{\mathbb R}}
\newcommand{\bbZ}{{\mathbb Z}}
\newcommand{\bbQ}{{\mathbb Q}}
\newtheorem{thm}{Theorem}[section]
\newtheorem{theorem}[thm]{Theorem}
\newtheorem{corollary}[thm]{Corollary}
\newtheorem{prop}[thm]{Proposition}
\newtheorem{lemma}[thm]{Lemma}
\newtheorem{remark}[thm]{Remark}
\newtheorem{defn}[thm]{Definition}
\newtheorem*{claim*}{Claim}
\numberwithin{equation}{section}
\begin{document}

\begin{abstract} Let $X$ be a smooth toric variety defined by the fan
$\Sigma$. We consider $\Sigma$ as a finite set with topology and define a natural sheaf of graded algebras $\mathcal{A}_\Sigma$ on $\Sigma$. The category of modules over $\mathcal{A}_\Sigma$ is studied (together with other related categories). This leads to a certain combinatorial Koszul duality equivalence.

We describe the equivariant category of coherent sheaves $coh_{X,T}$ and a related (slightly bigger) equivariant category $\mathcal{O}_{X,T}\text{-mod}$ in terms of sheaves of modules over the sheaf of algebras $\mathcal{A}_\Sigma$. Eventually (for a complete $X$) the combinatorial Koszul duality is interpreted in terms of the Serre functor on $D^b(coh_{X,T})$.
\end{abstract}

\title{Derived category of equivariant coherent sheaves on a smooth toric variety and Koszul duality}

\author{Valery A. Lunts}
\address{Department of Mathematics,
Indiana University,
Bloomington, IN, USA
47405,}

\address{National Research University Higher School of Economics, Moscow, Russia}

\thanks{The  author was supported by the Basic Research Program of the National Research University Higher School of Economics}

\maketitle

\section{Introduction}

Let $T$ be a complex torus, $M=Hom (T,\bbC ^*)$ its group of characters and $N=Hom(\bbC ^*,T)$ its dual group of 1-parameter subgroups. A normal toric variety $X$ can be described by a rational polyhedral fan $\Sigma $ in the vector space $N_{\bbR}$. One has several interesting (derived) categories of sheaves on the toric variety $X$ and it is tempting to try to describe those in terms of simple combinatorial structures on the fan $\Sigma$.
Several natural examples of such categories include:

(1) The equivariant derived category $D^b_{T,c}(X)$ of constructible sheaves on $X$ \cite{BeLu}.

(2) The (nonequivariant) derived category $D_{loc}^b(X)$ consisting of complexes of sheaves on $X$, which are locally constant (possibly of infinite rank) on each $T$-orbit.

(3) The equivariant derived category $D^b(coh_{X,T})$ of coherent sheaves on $X$.

Note that in the first two examples we consider the space $X$ with the analytic topology, and in the third one - with the Zariski topology.

The category in (1) was described in \cite{Lu} in the following way:
we consider $\Sigma$ as a topological space (with finitely many points) and the natural topology given by the identification $\Sigma =X/T$. It has a natural sheaf $\mathcal{C}_\Sigma$ of graded algebras, where the stalk $\mathcal{C}_{\Sigma ,\sigma}$ at the cone $\sigma \in \Sigma$ is the (evenly graded) algebra of polynomial functions on the linear span $\langle \sigma\rangle$ of $\sigma$. (This graded algebra $\mathcal{C}_{\Sigma ,\sigma}$ is canonically isomorphic to the cohomology ring of the classifying space of the subtorus $T_\sigma \subset T$ with the Lie algebra $\langle \sigma\rangle$). There is a natural equivalence of triangulated categories
\begin{equation} D^b_{T,c}(X)\simeq D^b_c(\mathcal{C}_\Sigma \text{-dgmod})
\end{equation}
where $D^b_c(\mathcal{C}_\Sigma \text{-dgmod})$ is a natural subcategory of the derived category of dg-modules over the sheaf of algebras $\mathcal{C}_\Sigma$.

The category (2) (or rather its version $D_{loc,unip}^b(X)$, when only local systems with pro-unipotent monodromy are allowed) was described in \cite{BrLu} in terms of {\it cosheaves} over the sheaf of algebras $\mathcal{C}_\Sigma$. These combinatorial descriptions of the categories (1) and (2) led to a Koszul duality
between certain closely related categories for dual affine toric varieties (see \cite{BrLu} for details).

In this paper we are concerned with the category (3) in case $X$ is smooth. The abelian category $coh_{X,T}$ is contained in a slightly bigger  category which we denote by $\mathcal{O}_{X,T}\text{-mod}$. It is the full abelian subcategory of $T$-equivariant sheaves of $\mathcal{O}_X$-modules, generated by extensions by zero $\mathcal{O}_{st({\bf o})}(m)$ from stars of orbits of the twisted structure sheaf (Definition \ref{defn of good subcat}). This category
$\mathcal{O}_{X,T}\text{-mod}$ is decsribed as follows: Define a sheaf of $M$-graded algebras $\mathcal{A}_{\Sigma}$ on $\Sigma$, with stalks
$$\mathcal{A}_{\Sigma ,\sigma}=A(st({\bf o}_\sigma))$$
where $A(st({\bf o}_\sigma))$ is the ring of regular functions on the star of the orbit corresponding to the cone $\sigma$. The category
$\mathcal{O}_{X,T}\text{-mod}$ is then equivalent to the category of finitely generated $\mathcal{A}_{\Sigma}$-modules (Proposition \ref{prop on comb equiv}):
\begin{equation}\label{intro first} \mathcal{O}_{X,T}\text{-mod}
\simeq \mathcal{A}_{\Sigma}\text{-mod}
\end{equation}
This equivalence induces the full and faithful functor (Theorem \ref{summary on der equiv and emb}):
$$\psi: D^b(coh_{X,T})\hookrightarrow D^b(\mathcal{A}_{\Sigma}\text{-mod})$$

Similarly one defines the abelian category $\text{co-}\mathcal{A}_{\Sigma}\text{-tcf}$ of {\it torsion cofinite} cosheaves of $\mathcal{A}_{\Sigma}$-modules. There exists a Koszul duality functor (Theorem \ref{koz dual thm}):
$${\bf K}:D^b(\mathcal{A}_{\Sigma}\text{-mod})\to D^b(\text{co-}\mathcal{A}_{\Sigma}\text{-tcf})$$
which is essentially taken from \cite{BrLu}. Finally we construct a natural functor
$$\phi :D^b(\text{co-}\mathcal{A}_{\Sigma}\text{-tcf})\to D^b(Qcoh_{X,T})$$
such that for a complete (and smooth) $X$ the composition $\phi \cdot {\bf K}\cdot \psi$ sends a locally free sheaf $\mathcal{F}\in coh_{X,T}$ to the complex
$Cousin(\omega _X[n]\otimes _{\mathcal{O}_X}\mathcal{F})$ - the equivariant Cousin complex, which is an equivariant resoluiton of the shifted sheaf $\omega _X[n]\otimes _{\mathcal{O}_X}\mathcal{F}$. In sum we obtain the commutative diagram of functors (Theorem \ref{main theorem nice formulation})
\[
\begin{CD}
D^b(coh _{X,T}) @>S>> D^b(coh _{X,T})\\
@VV\psi V  @AA\phi A \\
D^b(\mathcal{A}_{\Sigma}\text{-mod}) @>{\bf K} >> D^b(\text{co-}\mathcal{A}_{\Sigma}\text{-tcf})
\end{CD}
\]
where $S:D^b(coh_{X,T})\to D^b(coh_{X,T})$ is the Serre autoequivalence.

The sheaf $\mathcal{A}_{\Sigma}$ may be replaced with a "smaller" one
$\mathcal{B}_\Sigma$, which consists of certain functions on the cones $\sigma \in \Sigma$. Namely, the stalk $\mathcal{B}_{\Sigma ,\sigma}$ is the group algebra of the multiplicative semigroup $\{\exp{(l)}\}$, where $l:\sigma \to \bbC$ is a linear function which takes nonnegative integer values on all elements  $n\in N \cap \sigma$. The corresponding categories of modules are equivalent
$$\mathcal{A}_{\Sigma}\text{-mod}\simeq \mathcal{B}_{\Sigma}\text{-mod}$$

\begin{remark} It was shown in \cite{BrLu} that (a version of) the combinatorial Koszul duality functor $K$ has a geometrical meaning in terms of the categories (1) and (2) above. It is remarkable that the same combinatorial duality also has a geometrical meaning in the context of the category (3).
\end{remark}

\begin{remark} We hope that our results for smooth toric varieties can be extended to describe the category $D^b(coh _{Y,G})$, where $G$ is an algebraic group and $Y$ is an equivariant compactification of $G$ with finitely many orbits.
\end{remark}

\subsubsection{Organization of the paper} We recall some basic facts about toric varieties defined by a fan in Section \ref{term and notat}. The next Section
\ref{combinat sheaves} contains a discussion of the "combinatorial sheaves", i.e. sheaves of $\mathcal{A}_\Sigma$- and $\mathcal{B}_\Sigma$-modules. Section \ref{cosh and kos dual}  introduces combinatorial cosheaves of modules and contains a proof of  the Koszul duality theorem in this setting following \cite{BrLu}. In Section \ref{equv sheaves on toric var} we study the geometric categories of equivariant sheaves $coh_{X,T}$ and $\mathcal{O}_{X,T}\text{-mod}$ and establish their combinatorial description. Section \ref{kos dual and serre funct} relates combinatorial Koszul duality with the Serre functor on the category $D^b(coh_{X,T})$. Finally in the Appendix we recall the local cohomology and the equivariant Cousin complex.

\section{Preliminaries on toric varieties}\label{term and notat}
We recall the necessary facts about toric varieties defined by a fan \cite{Danilov}, \cite{Fulton}.

All varieties are defined over $\bbC$ and will be considered with their Zariski topology.

Let $T$ be a complex torus of dimension $n$, $M:=Hom(T,\bbC ^*)\simeq {\bbZ}^n$ the lattice of characters of $T$ and $N=Hom(\bbC ^*,T)=Hom (M,\bbZ)$ the dual lattice of 1-parameter subgroups. We have the natural isomorphism $N_{\bbR}=Lie_{\bbR}T$ - the real part of the Lie algebra of $T$.

Let $\Sigma $ be a rational polyhedral fan in $N_{\bbR}$ defining a normal toric variety $X=X_{\Sigma}$. The torus $T$ is equivariantly embedded in $X$ as the dense open orbit.
Cones in $\Sigma$ are in bijection with the $T$-orbits in $X$.
For a cone $\sigma \in \Sigma$ let ${\bf o}={\bf o}_\sigma \subset X$ be the corresponding orbit and $T_{\bf o}= T_\sigma \subset T$ its stabilizer. Since $X$ is normal, the stabilizer $T_\sigma$ is connected, i.e. a subtorus of $T$.   The Lie algebra of $T_\sigma$ is the span of $\sigma$
in $N_{\bbR}$. The orbit ${\bf o}_\sigma$ contains a unique point $x_\sigma \in {\bf o}_\sigma$ with the property that
$$\lim_{t\to 0}\lambda (t)=x_\sigma $$
for any 1-parameter subgroup $\lambda \in N$ in the interior of the cone  $\sigma$.

A cone $\tau \in \Sigma $ is a face of $\sigma $ if and only if ${\bf o}_\sigma \subset \overline{{\bf o}_\tau}$. We denote by $st({\bf o})\subset X$ the star of the orbit ${\bf o}$, which is the union of all orbits ${\bf o}'$ such that ${\bf o}\subset \overline{{\bf o}'}$.
Thus $st({\bf o})$ is the minimal $T$-invariant open subset of $X$ which contains ${\bf o}$. We have $st({\bf o}_\tau)\subset st({\bf o}_\sigma)$ if and only if $\tau \subset \sigma$. For example if $\sigma =0$ is the origin in $N _{\bbR}$, then  ${\bf o}_0=st({\bf o}_0)=T$ is the unique open orbit in $X$.

There exists a unique equivariant projection $p_\sigma :st({\bf o}_\sigma )\to {\bf o}_\sigma $ such that $p_\sigma (x_\tau)=x_\sigma$ for every $\tau \subset \sigma$. The restriction of the projection $p_\sigma $ to the torus $T$ identifies the orbit ${\bf o}_\sigma $ with the quotient torus $T/T_\sigma$.

The fiber $X_\sigma =X_{{\bf o}_\sigma } :=p_\sigma ^{-1}(x_\sigma)\subset st({\bf o}_\sigma)$ is an affine $T_\sigma$-toric variety with the (unique) fixed point $x_\sigma$. Note that $X_\sigma$ is the closure in $st({\bf o}_\sigma)$ of the torus $T_\sigma$.

Any splitting of the exact sequence of tori
\begin{equation}\label{seq to split} 1\to T_\sigma \to T\to T/T_\sigma \to 1
\end{equation}
induces an isomorphism $st({\bf o}_\sigma )\simeq {\bf o}_\sigma \times X_\sigma$.  Note that if $X$ is smooth, then $X_\sigma$ is isomorphic to the affine space ${\bbA}^{\dim T_\sigma}$.

For any orbit ${\bf o}\subset X$ the coordinate ring $A(st({\bf o}))$ is an $M$-graded algebra, which is the group algebra $\bbC [M\cap \sigma ^\vee]$ of the semigroup $M\cap \sigma ^\vee$. (Here $\sigma ^\vee \subset M_{\bbR}$ is the dual cone of $\sigma$). If $\tau \subset \sigma$, then the open embedding $st({\bf o}_\tau) \subset st({\bf o}_\sigma)$ gives the inclusion of $M$-graded algebras $A(st({\bf o}_\sigma))\to A(st({\bf o}_\tau))$.

Also, under the open embedding $st({\bf o}_\sigma )\subset st({\bf o}_\tau)$ the subvariety $X_\tau $ becomes a closed subvariety of $X_\sigma$. So we obtain the system of locally closed subvarieties $\{X_\sigma \subset X \}_{\sigma \in \Sigma}$ and closed embeddings $i_{\tau \sigma}:X_\tau \hookrightarrow X_\sigma$ for $\tau \subset \sigma$.

The coordinate ring $A(X_\sigma )$ is graded by the lattice of characters $M_\sigma$ of the torus $T_\sigma$ and the surjective ring homomorphism $A(X_\sigma )\to A(X_\tau)$ corresponding to the closed embedding $i_{\tau \sigma}$ is compatible with the surjective homomorphism of lattices $M_\sigma \to M_\tau$. More precisely: the torus $T_\tau $ acts on the $T_\sigma$-toric variety $X_\sigma$, which makes the coordinate ring $A(X_\sigma)$ also $M_\tau$-graded, and the ring homomorphism $A(X_\sigma )\to A(X_\tau)$ is a homomorphism of $M_\tau $-graded algebras.

\subsection{Some lemmas}\label{some lemmas} We prove some technical statements that will be used in Section \ref{equv sheaves on toric var}.

Fix an orbit ${\bf o}\subset X$; $T_{\bf o}\subset T$ - the stabilizer of ${\bf o}$. Choose a splitting
$T=T^{\bf o} \times T_{\bf o}$ of the sequence \eqref{seq to split}. This induces the isomorphism $st({\bf o})\simeq {\bf o} \times X_{\bf o}$ compatible with the isomorphism $T=T^{\bf o} \times T_{\bf o}$. Let $\dim {\bf o}=d$, so that $\dim X_{\bf o}=n-d$.

Assume that the toric variety $X$ is smooth. Then $X_{\bf o}={\bbA}^{\dim T_{\bf o}}$. The algebra of functions $A=A(st({\bf o}))=A({\bf o})\otimes
A(X_{\bf o})$ is $M$-graded and we can choose homogenous coordinates $x,y$ such that $A=\bbC [x_1^{\pm},...,x_d^{\pm}]\otimes \bbC [y_1,...,y_{n-d}]$.


Let $S\subset A$ be the multiplicative set consisting of functions that are invertible on ${\bf o}$; for $x\in {\bf o}$ let $S_x\subset A$ be the multiplicative set of functions that do not vanish at $x$; put
$\Gamma :=A[S^{-1}]$; Note that the $T$-action on the algebra $A$ extends to the $T$-action on $\Gamma$, because the set $S$ is $T$-invariant;

Let $i:{\bf o}\hookrightarrow X$ be the locally closed
inclusion. Consider the sheaf $\mathcal{O}_{X\vert {\bf o}}:=i_!i^{-1}\mathcal{O}_X$ - the extension by zero from ${\bf o}$ to $X$ of the restriction $i^{-1}\mathcal{O}_X$ (Section \ref{basics on restr of sheaves}). The $T$-action on $X$ induces the $T$-action on the space of global sections $\Gamma ({\bf o},\mathcal{O}_{X\vert {\bf o}})$.  This $T$-module  $\Gamma ({\bf o}, \mathcal{O}_{X\vert {\bf o}})$ coincides with the algebra $\Gamma$.

Clearly, the $\mathcal{O}_X$-module $\mathcal{O}_{X\vert {\bf o}}$ is generated by its global sections $\Gamma ({\bf o},\mathcal{O}_{X\vert {\bf o}})$.

\begin{lemma} \label{lemma kernel of loc} For any finitely generated ($M\text{-}$) graded $A$-module $N$ the natural map $N\to N\otimes _A\Gamma$ is injective.
\end{lemma}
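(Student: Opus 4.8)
The plan is to identify $N\otimes_A\Gamma$ with the localization $S^{-1}N$, so that the kernel of the natural map $N\to N\otimes_A\Gamma$ is precisely the $S$-torsion submodule $T_S(N)=\{m\in N : sm=0 \text{ for some }s\in S\}$; we must show $T_S(N)=0$. The essential difficulty is that, although $N$ is $M$-graded, the set $S$ is \emph{not} a set of homogeneous elements (for instance $1+y_1\in S$), so one cannot argue homogeneous-component by homogeneous-component. Instead I will reduce to the case $N=A/\mathfrak{p}$ for a homogeneous prime ideal $\mathfrak{p}\subset A$, where one can exploit that $A/\mathfrak{p}$ is a domain.

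First, as in the paragraph preceding the lemma, choose homogeneous coordinates so that $A=\bbC[x_1^{\pm},\dots,x_d^{\pm},y_1,\dots,y_{n-d}]$, where the degrees of $x_1,\dots,x_d,y_1,\dots,y_{n-d}$ together form a $\bbZ$-basis of $M$; in these coordinates the orbit $\mathbf{o}\subset st(\mathbf{o})$ is the closed subvariety $\{y_1=\dots=y_{n-d}=0\}$, so that $S=\{f\in A : f|_{y=0}\in\bbC[x^{\pm}]\text{ is a unit}\}$. Next I classify the homogeneous primes of $A$. Since every graded component of $A$ is spanned by a single monomial, each homogeneous ideal is a monomial ideal; since the $x_i$ are units, such an ideal is generated by monomials in the $y_j$, and a monomial ideal of $\bbC[y_1,\dots,y_{n-d}]$ is prime exactly when it is generated by a subset of the variables. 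Hence the homogeneous primes of $A$ are precisely $\mathfrak{p}_J=(y_j : j\in J)A$ for $J\subseteq\{1,\dots,n-d\}$, and each of them is disjoint from $S$: for $J\ne\emptyset$ every element of $\mathfrak{p}_J\subseteq(y_1,\dots,y_{n-d})A$ vanishes on $\mathbf{o}$ and so is not invertible there, while $\mathfrak{p}_\emptyset=0\notin S$.

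Now invoke the graded prime filtration: since $A$ is Noetherian and $\bbZ^n$-graded and $N$ is finitely generated and graded, $N$ admits a finite filtration $0=N_0\subset N_1\subset\dots\subset N_r=N$ by graded submodules with $N_i/N_{i-1}\cong(A/\mathfrak{p}_i)(\mu_i)$ for homogeneous primes $\mathfrak{p}_i$ and shifts $\mu_i\in M$. Because $\Gamma$ is flat over $A$ and the formation of $\ker\bigl(\,\cdot\to(\,\cdot\,)\otimes_A\Gamma\bigr)$ is left exact, an induction on $r$ via the snake lemma applied to $0\to N_{r-1}\to N\to N/N_{r-1}\to 0$ reduces the statement to the case $N=A/\mathfrak{p}$ with $\mathfrak{p}$ a homogeneous prime (a degree shift does not affect injectivity). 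For such $N$ one has $(A/\mathfrak{p})\otimes_A\Gamma=\bar{S}^{-1}(A/\mathfrak{p})$, where $\bar{S}$ is the image of $S$, and since $A/\mathfrak{p}$ is a domain this localization map is injective as soon as $0\notin\bar{S}$, i.e. as soon as $\mathfrak{p}\cap S=\emptyset$ — which holds because $\mathfrak{p}$ is one of the $\mathfrak{p}_J$.

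The only real obstacle is the non-homogeneity of $S$, which is exactly what rules out the naive homogeneous-component argument and forces the detour through prime quotients; granting the (standard) graded prime filtration, the rest is formal. One could also avoid the filtration altogether: if $T_S(N)\ne0$ it would possess an associated prime $\mathfrak{p}=\operatorname{Ann}(m)$ with $m\ne0$, and $\mathfrak{p}$ is homogeneous because $N$ is a graded module over a Noetherian graded ring; hence $\mathfrak{p}=\mathfrak{p}_J$ for some $J$, yet some $s\in S$ lies in $\operatorname{Ann}(m)=\mathfrak{p}_J$, contradicting $\mathfrak{p}_J\cap S=\emptyset$.
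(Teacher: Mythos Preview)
Your proof is correct. Both the prime-filtration route and the associated-prime route work, and your explicit classification of the homogeneous primes $\mathfrak{p}_J=(y_j:j\in J)$ together with $\mathfrak{p}_J\cap S=\emptyset$ is exactly the algebraic content needed.

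The paper's proof is shorter and phrased geometrically rather than in coordinates. It argues directly: for any nonzero $n\in N$ the closed set $\operatorname{supp}(n)=V(\operatorname{Ann}(n))$ is $T$-invariant, i.e.\ a union of $T$-orbits in $st(\mathbf{o})$; on the other hand, for $s\in S$ the zero locus $V(s)$ contains no $T$-orbit (since every orbit closure contains $\mathbf{o}$ and $s$ is nonvanishing on $\mathbf{o}$); hence $sn=0$ forces $\operatorname{supp}(n)\subseteq V(s)$ to be empty, so $n=0$. The unstated reason why $\operatorname{supp}(n)$ is $T$-invariant is precisely the fact you isolate at the end: the minimal primes over $\operatorname{Ann}(n)$ lie in $\operatorname{Ass}(An)\subseteq\operatorname{Ass}(N)$, and associated primes of the graded module $N$ are homogeneous. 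So your ``alternative'' argument via associated primes is essentially the paper's proof, translated from the language of orbits and supports into that of primes; your main argument via the graded prime filtration is a longer detour to the same destination. The coordinate classification of homogeneous primes as the $\mathfrak{p}_J$ is the algebraic counterpart of the paper's observation that orbits in $st(\mathbf{o})$ correspond to faces of the cone; what each approach buys is mostly stylistic --- the paper's version is brief and fits the toric-geometric context, while yours makes the commutative algebra fully explicit.
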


\begin{proof} Since $A\to \Gamma =A[S^{-1}]$ is a localization, the kernel of the map  $N\to N\otimes _A\Gamma$ consists of $n\in N$ such that $sn=0$ for some $s\in S$. Since $N$ is graded, the support of any nonzero $n\in N$ is a union of some orbits ${\bf o}'\subset st({\bf o})$. However every element $s\in S$ has no zeroes on ${\bf o}$, hence it does not vanish completely on any $T$-orbit in $st({\bf o})$. So $sn=0$ implies that $n=0$.
\end{proof}

Let $\hat{A}$ be the completion of $A$ along the orbit ${\bf o}$, i.e. $A$ is the completion with respect to the ideal $(y_1,...,y_{n-d})\subset A$. So
$\hat{A}=\bbC[x_1^{\pm},...,x_d^{\pm}][[y_1,...,y_{n-d}]]$ and $\hat{A}$ is a flat $A$-module (Proposition 10.14 in \cite{AM}). Notice that $A\subset \Gamma \subset \hat{A}$ which is also the completion of $\Gamma $
with respect to the ideal $(y_1,...,y_{n-d})$.

\begin{lemma} \label{lemma kernel of compl} Let $N$ be a finitely generated graded $A$-module. Then the natural map $N\to \hat{N}=N\otimes _A\hat{A}$ is injective.
\end{lemma}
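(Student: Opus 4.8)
The plan is to deduce Lemma \ref{lemma kernel of compl} from Lemma \ref{lemma kernel of loc} together with flatness of the completion. The key observation is that the map $N \to \hat N = N \otimes_A \hat A$ factors through the localization:
\[
N \longrightarrow N \otimes_A \Gamma \longrightarrow (N \otimes_A \Gamma) \otimes_\Gamma \hat A = N \otimes_A \hat A,
\]
using the fact noted just before the statement that $\hat A$ is simultaneously the completion of $\Gamma$ along the ideal $(y_1,\dots,y_{n-d})$. By Lemma \ref{lemma kernel of loc} the first arrow is injective, so it suffices to show the second arrow $N \otimes_A \Gamma \to (N\otimes_A\Gamma)\otimes_\Gamma \hat A$ is injective, i.e. to prove the analogous completion statement with $A$ replaced by $\Gamma$ and $N$ replaced by the finitely generated graded $\Gamma$-module $N_\Gamma := N\otimes_A\Gamma$.

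So I would reduce to the following: if $\Gamma = \bbC[x_1^\pm,\dots,x_d^\pm][y_1,\dots,y_{n-d}]$ and $P$ is a finitely generated (graded) $\Gamma$-module, then $P \to P\otimes_\Gamma \hat\Gamma$ is injective, where $\hat\Gamma$ is the $(y_1,\dots,y_{n-d})$-adic completion. Since $\hat\Gamma$ is a flat $\Gamma$-module (Proposition 10.14 in \cite{AM} again, as the ideal is finitely generated and $\Gamma$ is Noetherian), tensoring the inclusion $\bigcap_k \mathfrak{m}^k P \hookrightarrow P$ is harmless; the kernel of $P \to \hat P$ is exactly $\bigcap_{k\ge 0}(y_1,\dots,y_{n-d})^k P$ by the standard description of the kernel of adic completion for finitely generated modules over a Noetherian ring. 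Thus the statement becomes: $\bigcap_k (y_1,\dots,y_{n-d})^k P = 0$. By the Krull intersection theorem, this intersection $I$ satisfies $I = (y_1,\dots,y_{n-d})\cdot I$, so it is annihilated by an element of the form $1 - f$ with $f \in (y_1,\dots,y_{n-d})$.

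The remaining point — and I expect it to be the only real content — is to rule out $y$-torsion of the form $(1-f)P$, $f\in (y)$, being nonzero. Here I would again invoke the grading argument exactly as in the proof of Lemma \ref{lemma kernel of loc}: $P$ is $M$-graded, hence its support (as a subset of $\operatorname{Spec}\Gamma = {\bf o}\times X_{\bf o}$) is a union of $T$-orbit closures, equivalently a union of coordinate subspaces in the $y$-variables; the element $1-f$ is congruent to $1$ modulo $(y)$, so it is invertible in the local ring at every point of the subspace $\{y_1=\dots=y_{n-d}=0\} = {\bf o}$, and more importantly it does not vanish identically on any such coordinate subspace (its restriction to the origin of $X_{\bf o}$ is $1\neq 0$ in each homogeneous component). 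Hence $(1-f)p = 0$ forces $p=0$ for any homogeneous, hence any, $p\in P$. This gives $I = 0$, completing the proof. Alternatively, and perhaps more cleanly, one can avoid the Krull intersection machinery entirely by observing directly that $\hat A$ is flat over $A$ and that $\hat A = \Gamma \otimes_A \hat A$ is faithfully flat over the localization $\Gamma$ at all the relevant primes — but the grading argument above is the most self-contained, matching the style already established in Lemma \ref{lemma kernel of loc}.

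The main obstacle is really just organizing the reduction correctly: one must use that $\hat A$ is the completion of $\Gamma$ (not just of $A$) to split the map $N\to\hat N$ through $N\otimes_A\Gamma$, and then one must be careful that the Krull-type vanishing $\bigcap_k(y)^kP=0$ genuinely uses the graded structure (it is false for general finitely generated modules over $\Gamma$ localized away from $(y)$, e.g. $\Gamma[(1-y_1)^{-1}]$, so the hypothesis that $N$ is graded is essential and enters precisely through the support-is-a-union-of-orbits observation). Once that is in place the argument is short.
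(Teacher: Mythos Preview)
Your core idea --- Krull's intersection theorem (Theorem 10.17 in \cite{AM}) gives that the kernel consists of elements killed by some $1+a$ with $a\in(y_1,\dots,y_{n-d})$, and such elements don't vanish on any $T$-orbit --- is exactly the paper's proof. But the paper applies it \emph{directly} to $N$ over $A$: the kernel of $N\to\hat N$ consists of $n$ with $(1+a)n=0$ for some $a\in(y)$; since $1+a\in S$, the proof of Lemma~\ref{lemma kernel of loc} (applied verbatim to $N$, which \emph{is} graded) gives $n=0$. Two lines, no detour.

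Your factorization through $\Gamma$ is unnecessary and, as written, introduces two genuine errors. First, you write $\Gamma=\bbC[x_1^\pm,\dots,x_d^\pm][y_1,\dots,y_{n-d}]$ and $\operatorname{Spec}\Gamma={\bf o}\times X_{\bf o}$; but that ring is $A$, not $\Gamma=A[S^{-1}]$. Second, you claim $P=N\otimes_A\Gamma$ is $M$-graded and invoke the orbit-support argument for it; this fails because $\Gamma$ is not locally $T$-finite (indeed $\Gamma^{lf}=A$ by Lemma~\ref{equal of loc fin elts}), so $P$ carries a $T$-action but is not $M$-graded, and the support argument as stated does not apply. Your step (over $\Gamma$) is actually easier than you make it: for $f\in(y)\Gamma$ one checks $1-f$ is a \emph{unit} in $\Gamma$ (write $f=g/s$ with $g\in(y)A$, $s\in S$; then $s-g\equiv s$ on ${\bf o}$, so $s-g\in S$), whence $(1-f)p=0\Rightarrow p=0$ with no grading needed. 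But once you see that, you also see the detour buys nothing: applying Krull directly over $A$, the element $1+a$ already lies in $S$, and you are done by the graded argument on $N$ itself.
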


\begin{proof} Indeed, by Theorem 10.17 in \cite{AM} the kernel of the map $N\to \hat{N}$ consists of elements $n\in N$ such that $(1+a)n=0$ for some $a\in (y_1,...,y_{n-d})\subset A$. But then $1+a\in S$ and this implies that $n=0$ as shown in the proof of Lemma \ref{lemma kernel of loc}.
\end{proof}

Notice that the $T$-action on $A$ and $\Gamma$ extends to that on  $\hat{A}$. Hence for a finitely generated graded $A$-module $N$ the inclusions $N\hookrightarrow N\otimes _A\Gamma $ and $N\hookrightarrow \hat{N}$ are $T$-equivariant.

\begin{defn} For a $T$-module $R$ denote by $R^{lf}\subset R$ the subspace of $T$-finite elements. We call $R$ $T$-{\rm locally finite} if $R^{lf}=R$. If $R$ is a locally finite {\rm algebraic} $T$-module, then $R$ is $M$-graded and we write
$$R=\bigoplus _{m\in M}R^m$$
where $R^m$ is the $m$-th homogeneous component of $R$.
\end{defn}

For example, any graded $A$-module is a $T$-module which is locally finite.

\begin{lemma} \label{equal of loc fin elts} Let $N$ be a finitely generated graded $A$-module. Then

(a) $(N\otimes _A\Gamma )^{lf}=N$. (In particular $\Gamma ^{lf}=A$.)

(b) $(\hat{N})^{lf}=N$.
\end{lemma}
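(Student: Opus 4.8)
The inclusions $N\subseteq(N\otimes_A\Gamma)^{lf}$ and $N\subseteq(\hat N)^{lf}$ are immediate: $N$ is a finitely generated graded $A$-module, hence a locally finite $T$-module, and it embeds $T$-equivariantly and compatibly, $N\hookrightarrow N\otimes_A\Gamma\hookrightarrow\hat N$, the last map being injective because $\hat A$ is also the completion of $\Gamma$ along $(y_1,\dots,y_{n-d})$ and $N\otimes_A\Gamma$ is a finitely generated $\Gamma$-module (argue as in Lemma~\ref{lemma kernel of compl}). So the content is the reverse inclusions; I will prove (b) first and deduce (a). For any of the $T$-modules occurring here one has $R^{lf}=\bigoplus_{m\in M}R^m$, the sum of the $\chi_m$-eigenspaces --- these modules being algebraic, a finite-dimensional $T$-stable subspace is a rational representation and splits into weight spaces --- so it suffices to show that $N^m\to(\hat N)^m$ is an isomorphism for every $m$, surjectivity being the point. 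Using the homogeneous coordinates $A=\bbC[x_1^{\pm},\dots,x_d^{\pm}]\otimes\bbC[y_1,\dots,y_{n-d}]$ and writing $I=(y_1,\dots,y_{n-d})$, express a weight as $m=(a,b)$ with $a\in\bbZ^d$, $b\in\bbZ^{n-d}$, so that $A^{(a,b)}=\bbC\,x^ay^b$ if $b\ge0$ and $0$ otherwise, and call $b$ the $y$-degree of $m$. The combinatorial input is that for finitely generated graded $N$ there is a fixed $b_0\in\bbZ^{n-d}$ with $N^{(a,b)}=0$ whenever $b\not\ge b_0$ (take $b_0$ coordinatewise below the $y$-degrees of a finite homogeneous generating set).

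For (b): since $N$ is finitely generated over the Noetherian ring $A$, $\hat N=\varprojlim_k N/I^kN$, an inverse limit of graded $T$-modules with $T$-equivariant transition maps. Passing to the $\chi_{(a,b)}$-eigenspace commutes with this inverse limit (a filtered intersection of kernels), so $(\hat N)^{(a,b)}=\varprojlim_k N^{(a,b)}/(I^kN)^{(a,b)}$. Now $(I^kN)^{(a,b)}=\sum_{|c|=k}y^cN^{(a,\,b-c)}$, and each summand is zero unless $b-c\ge b_0$, hence unless $|c|=k\le\sum_j(b_j-b_{0,j})$; so $(I^kN)^{(a,b)}=0$ for all large $k$, the inverse system is eventually constant with identity transition maps, and $(\hat N)^{(a,b)}=N^{(a,b)}$. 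Summing over $m$ gives $(\hat N)^{lf}=N$.

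For (a): the compatible $T$-equivariant inclusion $N\otimes_A\Gamma\hookrightarrow\hat N$ gives $(N\otimes_A\Gamma)^{lf}\subseteq(\hat N)^{lf}=N$, which together with $N\subseteq(N\otimes_A\Gamma)^{lf}$ yields equality; the case $N=A$ is the parenthetical $\Gamma^{lf}=A$. (Alternatively one can argue (a) directly inside $S^{-1}N$ by the same bottom-degree mechanism: a $\chi_{(a,b)}$-eigenvector $\xi=n/s$ has $s=s_0(1+h')$, where $s_0$, the image of $s$ in $A/I=\bbC[x_1^{\pm},\dots,x_d^{\pm}]$, is a unit --- being nowhere zero on the orbit ${\bf o}$ --- hence lies in $A^{\times}$, and $h'\in I$; then $(1+h')\xi=n'\in N$, and since every eigencomponent of $h'\xi$ has $y$-degree $b+c$ for some $c\in\bbZ^{n-d}_{\ge0}\setminus\{0\}$, comparing $\chi_{(a,b)}$-eigencomponents in $\xi=n'-h'\xi$ forces $\xi=(n')^{(a,b)}\in N$.)

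The step that needs care is the eigenspace decomposition $R^{lf}=\bigoplus_mR^m$ for $R=\hat N$ (and likewise for $N\otimes_A\Gamma$): it holds because any finite-dimensional $T$-stable subspace $V\subseteq\hat N$ injects, for $k\gg0$, into the rational $T$-module $N/I^kN$ --- $V$ being finite-dimensional and $\bigcap_k I^k\hat N=0$ --- and is therefore itself rational and splits into weight spaces. This is the only place where the pro-algebraic nature of the completion is used, rather than formal properties of $\varprojlim$ and of graded modules, and I expect it to be the main (if minor) obstacle. Everything else --- the boundedness of the $y$-support of $N$, the exactness of $\chi_m$-eigenspace functors under inverse limits, and the elementary fact that a nowhere-vanishing regular function on the torus ${\bf o}$ is a unit of its coordinate ring --- is routine.
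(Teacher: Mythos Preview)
Your proof is correct. Both you and the paper reduce (a) to (b) via the $T$-equivariant inclusion $N\otimes_A\Gamma\hookrightarrow\hat N$, so the only substantive difference is in how (b) is established.

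The paper argues by d\'evissage: using the exactness of completion and left-exactness of $(-)^{lf}$, it reduces to cyclic modules $N=A/K$, where the nonzero monomial images $\overline{x^sy^t}$ form a $\bbC$-basis of eigenvectors with \emph{distinct} characters, so an element of $\hat N$ is $T$-finite iff it is a finite linear combination of them. Your argument bypasses the reduction to cyclic modules and works directly: you observe that a finitely generated graded $N$ has $y$-support bounded below by some $b_0$, compute $(\hat N)^{(a,b)}=\varprojlim_k N^{(a,b)}/(I^kN)^{(a,b)}$, and show the system stabilizes because $(I^kN)^{(a,b)}=\sum_{|c|=k}y^cN^{(a,b-c)}$ vanishes once $k>\sum_j(b_j-b_{0,j})$. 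This is a cleaner and more uniform argument; in exchange you must justify that $(\hat N)^{lf}=\bigoplus_m(\hat N)^m$, which you do correctly by injecting a finite-dimensional $T$-stable subspace into some $N/I^kN$ via separatedness of the completion. The paper's cyclic reduction makes this step implicit (distinct one-dimensional weight spaces), at the cost of the extra d\'evissage. Your alternative direct argument for (a) is also fine and is not in the paper.
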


\begin{proof} Since $N\otimes _A\Gamma \subset \hat{N}$ it suffices to prove (b).

Let $0\to N_1\to N\to N_2\to 0$  be a short exact sequence of finitely generated graded $A$-modules. Then we get the exact sequences

$$0\to \hat{N}_1\to \hat{N}\to \hat{N}_2\to 0$$
and
$$0\to \hat{N}_1^{lf}\to \hat{N}^{lf}\to \hat{N}_2^{lf}$$
Hence if the claim holds for $N_1,N_2$ it also holds for $N$. So we may assume that $N$ is cyclic, $N=A/K$ for a graded ideal $K\subset A$. The nonzero images $\overline{\bf x^sy^t}$ of different monomials ${\bf x^sy^t}\in A$ form a $\bbC$-basis of $N$. These are $T$-eigenvectors corresponding to distinct $T$-characters. An element of the completion $\hat{N}$ is a possibly infinite sum $f=\sum _{s,t}a_{s,t}\overline{\bf x^sy^t}$, with $a_{s,t}\in \bbC$. Then $f\in \hat{N}^{lf}$ if and only if $f$ is a finite linear combination of the monomials $\overline{\bf x^sy^t}$, i.e.
$f\in N$.
\end{proof}

\section{Sheaves of graded algebras $\mathcal{A}_\Sigma $ and $\mathcal{B}_\Sigma$ on a fan $\Sigma$}\label{combinat sheaves}

\subsection{} We will consider the fan $\Sigma =\{\sigma\}$ as a topological space with finitely many points, where $\sigma \in \overline{\tau}$ iff $\tau $ is a face of $\sigma$ (i.e. $\tau \subset \sigma$). Identifying $\Sigma$ with the set of orbits in $X$, the topology on $\Sigma$ coincides with the quotient topology on $\overline{X}=X/T$. The subsets
$$[\sigma] =\{\tau \subset \sigma\}$$
are the irreducible open subsets of $\Sigma$. If ${\bf o}_\sigma$ is the orbit corresponding to $\sigma$, then its star $st({\bf o}_\sigma)\subset X$ is the open subset which corresponds to $[\sigma]$.

To define a sheaf $F$ on $\Sigma$ it suffices to describe its sections over the open subsets $[\sigma ]$. Thus a sheaf $F$ is given by a collection of sets $\{F_\sigma \}_{\sigma \in \Sigma}$ and maps $\alpha _{\sigma \tau}:F_\sigma \to F_\tau$ for $\tau \subset \sigma$, such that $\alpha _{\tau \xi}\cdot \alpha _{\sigma \tau}=\alpha _{\sigma \xi}$.

\subsection{Sheaf of $M$-graded algebras $\mathcal{A}_{\Sigma}$}\label{sect on sheaf A_sigma}

\begin{defn} \label{defn of sheaf a} For each $\sigma \in \Sigma$ consider the corresponding orbit ${\bf o}_\sigma \subset X$ and its star $st({\bf o}_\sigma)$. Define the sheaf of $M$-graded algebras $\mathcal{A}_\Sigma$ on $\Sigma$ by putting
$$\mathcal{A}_{\Sigma ,\sigma}:=A(st({\bf o}_\sigma))$$
\end{defn}

If we identify $\Sigma$ with the quotient space $\overline{X}$, then the sheaf $\mathcal{A}_\Sigma$ is the direct image $q_*\mathcal{O}_X$ under the quotient map $q:X\to \overline{X}$.

\begin{defn} An $\mathcal{A}_\Sigma$-module is by definition a sheaf of $M$-graded modules over the sheaf of rings $\mathcal{A}_\Sigma$. The category of $\mathcal{A}_\Sigma$-modules is denoted by $\mathcal{A}_\Sigma \text{-Mod}$. It is an abelian category.

Let $\mathcal{A}_\Sigma \text{-mod} \subset \mathcal{A}_\Sigma \text{-Mod}$ be its full subcategory consisting of objects $F=\{F_\sigma\}$ such that every $F_\sigma$ is a finitely generated $\mathcal{A}_{\Sigma ,\sigma}$-module. It is an abelian subcategory.

Let $\mathcal{A}_\Sigma \text{-coh} \subset \mathcal{A}_\Sigma \text{-mod}$ be the full abelian subcategory consisting of sheaves $F$ such that for every $\tau \subset \sigma$ the structure morphism $\mathcal{A}_{\Sigma ,\tau}\otimes _{\mathcal{A}_{\Sigma ,\sigma}}F_\sigma \to F_\tau$ is an isomorphism.
\end{defn}

The group of characters $M$ acts on the category $\mathcal{A}_\Sigma \text{-Mod}$ by changing the grading of the module: every $m\in M$ shifts the grading of $F\in \mathcal{A}_\Sigma \text{-Mod}$ by $m$, $F\mapsto F(m)$.

Given $F\in \mathcal{A}_\Sigma \text{-Mod}$ and a locally closed $Z\subset \Sigma$ we denote by $F\vert _Z$ the restriction of $F$ to $Z$. Let $F_Z$ be the extension by zero of $F\vert _Z$ to $\Sigma$. Given an open subset $U\subset \Sigma$ and $Z:=\Sigma -U$ we have the usual short exact sequence
in $\mathcal{A}_\Sigma \text{-Mod}$
$$0\to F_U\to F\to F_Z\to 0$$

\begin{lemma}\label{lemma on proj res in a-mod} (1) For each $\sigma \in \Sigma$ and each $m\in M$ the object $\mathcal{A}_\Sigma (m)_{[\sigma]}\in \mathcal{A}_\Sigma \text{-Mod}$ is projective. More precisely, for any $G\in \mathcal{A}_\Sigma \text{-Mod}$ we have
$$Hom (\mathcal{A}_\Sigma (m)_{[\sigma]},G)=G_\sigma ^m$$
(2) Every object in $\mathcal{A}_\Sigma \text{-Mod}$ (resp. in $\mathcal{A}_\Sigma \text{-mod}$)  has a finite projective resolution by sheaves which are (resp. finite) direct sums $\bigoplus_{\sigma ,m}\mathcal{A}_\Sigma (m)_{[\sigma]}$.
\end{lemma}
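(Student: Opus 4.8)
The plan is to prove (1) first, then derive (2) from it. For (1), recall that an $\mathcal{A}_\Sigma$-module $G$ is the data of $M$-graded $\mathcal{A}_{\Sigma,\sigma}$-modules $G_\sigma$ together with compatible structure maps $\alpha_{\sigma\tau}\colon G_\sigma\to G_\tau$ (semilinear over $\mathcal{A}_{\Sigma,\sigma}\to\mathcal{A}_{\Sigma,\tau}$), and that $\mathcal{A}_\Sigma(m)_{[\sigma]}$ is the extension by zero to $\Sigma$ of the restriction of $\mathcal{A}_\Sigma(m)$ to the open set $[\sigma]=\{\tau\subset\sigma\}$. Concretely $\bigl(\mathcal{A}_\Sigma(m)_{[\sigma]}\bigr)_\tau = \mathcal{A}_{\Sigma,\tau}(m)$ if $\tau\subset\sigma$ and $0$ otherwise, with the natural restriction maps. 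First I would write down the evaluation map $\mathrm{Hom}(\mathcal{A}_\Sigma(m)_{[\sigma]},G)\to G_\sigma^m$ sending a morphism $f$ to $f_\sigma(1)$, where $1\in\mathcal{A}_{\Sigma,\sigma}^{\,0}$ sits in degree $-m$ after the twist, i.e. $f_\sigma(1)\in G_\sigma^m$. I claim this is a bijection. For surjectivity, given $g\in G_\sigma^m$, define for each $\tau\subset\sigma$ the graded $\mathcal{A}_{\Sigma,\tau}$-linear map $f_\tau\colon\mathcal{A}_{\Sigma,\tau}(m)\to G_\tau$ by $f_\tau(a)=a\cdot\alpha_{\sigma\tau}(g)$; compatibility of the $f_\tau$ with restriction maps is immediate from compatibility of the $\alpha$'s, so this defines a morphism of $\mathcal{A}_\Sigma$-modules with $f_\sigma(1)=g$. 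For injectivity, a morphism out of $\mathcal{A}_\Sigma(m)_{[\sigma]}$ is determined on each stalk $\mathcal{A}_{\Sigma,\tau}(m)$ by where it sends the generator $1$, and $\mathcal{A}_{\Sigma,\tau}$-linearity forces $f_\tau(1)=\alpha_{\sigma\tau}(f_\sigma(1))$; hence $f$ is determined by $f_\sigma(1)$. Since $G\mapsto G_\sigma^m$ is an exact functor on $\mathcal{A}_\Sigma\text{-Mod}$ (kernels, cokernels, and extension-by-zero are all computed stalkwise and the $m$-th graded piece is exact), $\mathcal{A}_\Sigma(m)_{[\sigma]}$ is projective.

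For (2), the key is to build, for any $F\in\mathcal{A}_\Sigma\text{-Mod}$, a surjection from a sum of the projectives $\mathcal{A}_\Sigma(m)_{[\sigma]}$, and then to control the length of the resolution. A surjection is easy: for each $\sigma$ choose homogeneous generators of $F_\sigma^{\,\bullet}$ over $\mathcal{A}_{\Sigma,\sigma}$ — finitely many if $F\in\mathcal{A}_\Sigma\text{-mod}$ — and use the corresponding maps $\mathcal{A}_\Sigma(m)_{[\sigma]}\to F$ from part (1); their sum over all $\sigma$ is surjective on every stalk because the open sets $[\sigma]$ cover each stalk index and $\bigl(\mathcal{A}_\Sigma(m)_{[\sigma]}\bigr)_\tau$ surjects onto the image of the chosen generator of $F_\sigma$ in $F_\tau$, while for $\tau$ itself the generators of $F_\tau$ are hit directly. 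To get \emph{finiteness} of the resolution, I would argue by induction on the number of cones in $\Sigma$, using the stratification. Pick a maximal cone $\sigma_0$ (a closed point of $\Sigma$), so $\{\sigma_0\}$ is closed and $U=\Sigma\smallsetminus\{\sigma_0\}$ is open. From the exact sequence $0\to F_U\to F\to F_{\sigma_0}\to 0$ it suffices to resolve $F_{\sigma_0}$ and $F_U$ separately. The module $F_{\sigma_0}$ is supported at a single point and equals $(F_{\sigma_0})_{\sigma_0}=F_{\sigma_0}$ there; since $\mathcal{A}_{\Sigma,\sigma_0}$ is a polynomial ring (here one uses smoothness of $X$, which makes $A(st({\bf o}_{\sigma_0}))$ a localized polynomial ring, hence of finite global dimension), a finite graded free resolution of $F_{\sigma_0}$ over $\mathcal{A}_{\Sigma,\sigma_0}$ yields, by extension by zero, a finite resolution by sheaves $\bigoplus\mathcal{A}_\Sigma(m)_{\{\sigma_0\}}=\bigoplus\mathcal{A}_\Sigma(m)_{[\sigma_0]}$ (note $[\sigma_0]=\{\sigma_0\}$ when $\sigma_0$ is maximal only if it has no proper faces in $\Sigma$ — in general one must be slightly more careful, see below). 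For $F_U$ one applies the inductive hypothesis on the smaller fan $U$ and then extends the resulting projectives by zero from $U$ to $\Sigma$, which sends $\mathcal{A}_U(m)_{[\tau]}$ to $\mathcal{A}_\Sigma(m)_{[\tau]}$.

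The main obstacle is the last point: $[\sigma_0]$ for a maximal cone $\sigma_0$ is generally not the single point $\{\sigma_0\}$ but all of its faces, so a free resolution of the stalk $F_{\sigma_0}$ does not directly give a resolution of the skyscraper-type module $F_{\{\sigma_0\}}$ by the $\mathcal{A}_\Sigma(m)_{[\sigma_0]}$'s. The fix is to handle the closed point differently: choose instead a \emph{minimal} cone with the property that $[\sigma_0]$ is as small as possible, or — cleaner — to induct so that at each stage one peels off the module $F_{[\sigma]}=F_U$ for $U=[\sigma]$ and the quotient $F_{Z}$ for $Z=\Sigma\smallsetminus[\sigma]$; since $\mathcal{A}_\Sigma(m)_{[\sigma]}$ is already projective and already of the required form, and any $F$ supported on the open set $[\sigma]$ admits a two-step presentation $\bigoplus\mathcal{A}_\Sigma(m')_{[\sigma]}\to\bigoplus\mathcal{A}_\Sigma(m)_{[\sigma]}\to F\to0$ coming from a presentation of the $\mathcal{A}_{\Sigma,\sigma}$-module $F_\sigma$ (using that $\mathcal{A}_{\Sigma,\sigma}$ is a localized polynomial ring of finite global dimension), one resolves $F$ by filtering it by the open sets $[\sigma]$ ordered by inclusion and splicing the resulting finite projective complexes via the horseshoe lemma. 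The finiteness of the total length follows because there are finitely many cones and each $\mathcal{A}_{\Sigma,\sigma}$ has finite (and uniformly bounded) global dimension; keeping track of whether the summands are \emph{finite} direct sums in the $\mathcal{A}_\Sigma\text{-mod}$ case is then automatic since finitely generated modules over a Noetherian polynomial ring have finite free resolutions of finite rank.
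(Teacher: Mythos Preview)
Your argument for part~(1) is correct and spells out in detail what the paper dismisses as ``clear.''

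For part~(2), however, there is a genuine gap in your proposed fix. You claim that ``any $F$ supported on the open set $[\sigma]$ admits a two-step presentation $\bigoplus\mathcal{A}_\Sigma(m')_{[\sigma]}\to\bigoplus\mathcal{A}_\Sigma(m)_{[\sigma]}\to F\to 0$ coming from a presentation of the $\mathcal{A}_{\Sigma,\sigma}$-module $F_\sigma$.'' This is false: a sheaf supported on $[\sigma]$ is \emph{not} determined by its stalk at $\sigma$. For instance, take $F$ with $F_\sigma=0$ but $F_\tau\neq 0$ for some proper face $\tau\subsetneq\sigma$. By part~(1), every morphism $\mathcal{A}_\Sigma(m)_{[\sigma]}\to F$ corresponds to an element of $F_\sigma^m=0$, hence is zero; so no direct sum of $\mathcal{A}_\Sigma(m)_{[\sigma]}$'s can surject onto $F$. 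Your final sentence about ``filtering by the open sets $[\sigma]$ ordered by inclusion'' does not repair this, because you never say how to resolve the graded pieces.

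The paper's induction is on the \emph{dimension} of cones in the support, and the point you are missing is this: to resolve a skyscraper $N_{\{\sigma\}}$, first replace $N$ by a finite free $\mathcal{A}_{\Sigma,\sigma}$-resolution (using that $\mathcal{A}_{\Sigma,\sigma}$ has finite global dimension), reducing to $N=\mathcal{A}_{\Sigma,\sigma}(m)$. Then the surjection $\mathcal{A}_\Sigma(m)_{[\sigma]}\twoheadrightarrow\mathcal{A}_\Sigma(m)_{\{\sigma\}}$ has kernel supported on $[\sigma]\smallsetminus\{\sigma\}$, i.e.\ on cones of dimension \emph{strictly less than} $\dim\sigma$. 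By induction on dimension (the base case $\sigma=0$ being immediate since $[0]=\{0\}$), this kernel already has a finite projective resolution of the required form, and the horseshoe lemma finishes the job for general $F$ via the filtration by skyscrapers.
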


\begin{proof} (1) The first assertion is clear.

(2) We use the fact that for each $\sigma \in \Sigma$ every (resp. finitely generated) graded module over the algebra $\mathcal{A}_{\Sigma ,\sigma}$ has a finite resolution by (resp. finitely generated) free $\mathcal{A}_{\Sigma ,\sigma}$-modules. And then induct on the dimension of cones $\sigma$  in the support of $F\in \mathcal{A}_\Sigma \text{-Mod}$.
\end{proof}

\subsection{Sheaf of graded algebras $\mathcal{B}_{\Sigma}$}

Recall that for every $\sigma \in \Sigma$ one has the affine $T_\sigma$-toric variety $X_\sigma$, which is a closed subvariety of $st({\bf o}_\sigma)$. If $\tau \subset \sigma$, then $X_\tau$ is a closed subvariety of $X_\sigma$, so we have the surjection $A(X_\sigma )\to A(X_\tau)$. The algebra $A(X_\sigma)$ is $M_\sigma$-graded and the homomorphism $A(X_\sigma )\to A(X_\tau)$ is compatible with the surjection of the groups $M_\sigma \to M_\tau$.

\begin{defn} \label{defn of sheaf b} Define the sheaf of algebras $\mathcal{B}_\Sigma$ on $\Sigma$ by putting
$$\mathcal{B}_{\Sigma ,\sigma}:=A(X_\sigma)$$
We will refer to $\mathcal{B}_\Sigma$ as a the sheaf of graded algebras
in the sense that each stalk $\mathcal{B}_{\Sigma ,\sigma}$ is graded by its own group $M_\sigma$.
\end{defn}

\begin{defn}\label{defn of b-mod} A graded $\mathcal{B}_\Sigma$-module is a
sheaf of $\mathcal{B}_\Sigma$-modules $\mathcal{N}=\{\mathcal{N}_\sigma \}_\sigma$, where each $\mathcal{N}_\sigma$ is a $M_\sigma$-graded module and for $\tau \subset \sigma$ the structure morphism $\mathcal{B}_{\Sigma ,\tau}\otimes _{\mathcal{B}_{\Sigma ,\sigma}}\mathcal{N}_\sigma \to \mathcal{N}_\tau$ is a morphism of $M_\tau$-graded modules.

As in the case of graded $\mathcal{A}_\Sigma$-modules, we have the analogous abelian categories
$$\mathcal{B}_\Sigma\text{-coh} \subset \mathcal{B}_\Sigma\text{-mod} \subset \mathcal{B}_\Sigma\text{-Mod}$$
\end{defn}

\subsubsection{Notation}\label{notation sub} Notice that the group of characters $M$ surjects onto each group $M_\sigma$. By abuse of notation we denote the image in $M_\sigma$ of $m\in M$ also by $m$. For example, we will denote the shift of grading action of the group $M$ of the category $\mathcal{B}_{\Sigma}\text{-Mod}$ as $F\mapsto F(m)$, which means that
the grading on the stalk $F_\sigma$ is shifted by the image of $m$ in $M_\sigma$. Also, given an $M_\sigma$ graded vector space $V$ and $m\in M$ we denote by $V^m$ the homogeneous component of $V$ corresponding to the image of $m$ in $M_\sigma$.

The whole discussion of the category of graded $\mathcal{A}_\Sigma$-modules can be repeated the similar category of $\mathcal{B}_\Sigma$-modules.  The next lemma is a direct analogue of Lemma
\ref{lemma on proj res in a-mod}.

\begin{lemma} \label{lemma on proj res in b-mod} For each $\sigma \in \Sigma$ and each $m\in M$ the object $\mathcal{B}(m)_{[\sigma]}\in \mathcal{B}_\Sigma \text{-Mod}$ is projective. More precisely, for any $F\in \mathcal{B}_\Sigma \text{-Mod}$ we have
$$Hom (\mathcal{B}_\Sigma (m)_{[\sigma]},F)=F_\sigma ^m$$
Every object in $\mathcal{B}_\Sigma \text{-Mod}$ (resp. in $\mathcal{B}_\Sigma \text{-mod}$) has a finite projective resolution by sheaves which are (resp. finite) direct sums $\bigoplus_{\sigma ,m}\mathcal{B}(m)_{[\sigma]}$.
\end{lemma}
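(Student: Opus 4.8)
The statement is a verbatim analogue of Lemma \ref{lemma on proj res in a-mod}, and the proof is the same; here is how I would organize it.

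\emph{Part (1).} This is formal, exactly as in the $\mathcal{A}_\Sigma$ case. Let $j_\sigma:[\sigma]\hookrightarrow\Sigma$ be the open embedding, so that $\mathcal{B}_\Sigma(m)_{[\sigma]}=j_{\sigma!}\big(\mathcal{B}_\Sigma(m)|_{[\sigma]}\big)$. By the adjunction $\operatorname{Hom}_\Sigma\big(j_{\sigma!}(-),F\big)=\operatorname{Hom}_{[\sigma]}\big(-,F|_{[\sigma]}\big)$ we are reduced to computing morphisms on $[\sigma]$. On $[\sigma]$ the cone $\sigma$ is the unique closed point and $\mathcal{B}_\Sigma(m)|_{[\sigma]}$ is the free rank-one module over the structure sheaf $\mathcal{B}_\Sigma|_{[\sigma]}$ (twisted by $m$); hence a morphism from it to a sheaf $G$ is determined, freely, by the image in the stalk $G_\sigma$ of the global section $1$, and that image must be homogeneous of degree (the image of) $m$ in $M_\sigma$. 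Thus $\operatorname{Hom}(\mathcal{B}_\Sigma(m)_{[\sigma]},F)=F_\sigma^m$. Since the functor $F\mapsto F_\sigma^m$ (take the stalk at $\sigma$, then its $m$-th graded component) is exact, $\mathcal{B}_\Sigma(m)_{[\sigma]}$ is projective.

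\emph{Part (2), existence of resolutions.} For each $\sigma$ pick a set of homogeneous $\mathcal{B}_{\Sigma,\sigma}$-generators of the stalk $F_\sigma$; by (1) these assemble into a morphism $P_0:=\bigoplus_{\sigma,m}\mathcal{B}_\Sigma(m)_{[\sigma]}\to F$, which is an epimorphism because already the summands with $\sigma=\tau$ surject onto the stalk $F_\tau$. Iterating yields a resolution of the required shape. When $F\in\mathcal{B}_\Sigma\text{-mod}$, the fan $\Sigma$ is finite and every $\mathcal{B}_{\Sigma,\sigma}=A(X_\sigma)$ is Noetherian, so finitely many generators suffice at each stage, the $P_i$ are finite direct sums, and all syzygies remain in $\mathcal{B}_\Sigma\text{-mod}$.

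\emph{Part (2), finiteness.} I would induct on the number of cones of $\Sigma$ (equivalently, on the dimensions of the cones occurring in the supports, as in Lemma \ref{lemma on proj res in a-mod}); the base case $\Sigma=\{0\}$ is the graded Hilbert syzygy theorem for $\mathcal{B}_{\Sigma,0}$. For the inductive step choose a maximal cone $\sigma_0$, put $U:=\Sigma\setminus\{\sigma_0\}$ (an open subfan, again smooth) and $Z:=\{\sigma_0\}$, and use the tautological short exact sequence $0\to F_U\to F\to F_Z\to 0$. The extension-by-zero functor along $U\hookrightarrow\Sigma$ is exact and sends $\mathcal{B}_U(m)_{[\tau]}$ to $\mathcal{B}_\Sigma(m)_{[\tau]}$, so the inductive hypothesis applied to $U$ gives a finite resolution of $F_U$ of the desired type. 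For $F_Z$, which is the skyscraper at $\sigma_0$ with stalk $M:=F_{\sigma_0}$, I use that $\mathcal{B}_{\Sigma,\sigma_0}=A(X_{\sigma_0})$ is a polynomial ring (this is where smoothness of $X$ enters), so $M$ has a finite graded free resolution; applying the exact skyscraper functor reduces us to resolving a single $\operatorname{Sky}_{\sigma_0}\big(\mathcal{B}_{\Sigma,\sigma_0}(m)\big)$, for which there is a short exact sequence $0\to G\to\mathcal{B}_\Sigma(m)_{[\sigma_0]}\to\operatorname{Sky}_{\sigma_0}\big(\mathcal{B}_{\Sigma,\sigma_0}(m)\big)\to 0$ with $G$ the subsheaf supported on the proper faces of $\sigma_0$; since $G$ is supported inside $U$ it is finitely resolvable by induction. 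Splicing all of these with the horseshoe lemma (and its mapping-cone variant for the "resolution of a resolution" steps) produces a finite resolution of $F$. The main obstacle is precisely this bookkeeping — checking that the several extension-by-zero and skyscraper functors are exact and carry the generators $\mathcal{B}_\Sigma(m)_{[\sigma]}$ to generators of the same kind, and that the length bounds combine correctly; the homological input itself (Hilbert syzygy plus horseshoe) is routine.
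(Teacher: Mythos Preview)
Your proof is correct and follows the same approach as the paper, which simply says ``The same as of Lemma \ref{lemma on proj res in a-mod}'' --- i.e., the formal Hom computation for part (1), and for part (2) an induction on the cones in the support using that each stalk $\mathcal{B}_{\Sigma,\sigma}$ is a polynomial ring (Hilbert syzygy). You have supplied considerably more detail than the paper does (the explicit horseshoe bookkeeping and the skyscraper short exact sequence), but the skeleton is identical.
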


\begin{proof} The same as of Lemma \ref{lemma on proj res in a-mod}.
\end{proof}

There is a natural surjection of sheaves of graded algebras $\delta :\mathcal{A}_{\Sigma}\to \mathcal{B}_{\Sigma}$. Namely, for any two cones $\tau \subset \sigma$ we have the commutative diagram of embeddings of varieties:
$$\begin{array}{ccc} st(o_\tau) & \hookrightarrow & st(o_\sigma) \\
\uparrow & & \uparrow \\
X_\tau & \hookrightarrow & X_\sigma
\end{array}
$$
whence the induced homomorphisms of graded algebras
\begin{equation} \label{comm diag for morph redo} \begin{array}{ccc} A(st(o_\tau)) & \leftarrow & A(st(o_\sigma))\\
 \downarrow \delta _\tau & & \downarrow \delta _\sigma \\
A(X_\tau) & \leftarrow & A(X_\sigma)
\end{array}
\end{equation}

\begin{prop}\label{equiv of cat of modules}
The functor of extension of scalars
$$\delta ^*:\mathcal{A}_{\Sigma }\text{-Mod}\to \mathcal{B}_{\Sigma }\text{-Mod}$$
is an equivalence of categories. It induces the equivalences of subcategories $\mathcal{A}_{\Sigma }\text{-mod}\simeq \mathcal{B}_{\Sigma }\text{-mod}$ and $\mathcal{A}_{\Sigma }\text{-coh}\to \mathcal{B}_{\Sigma }\text{-coh}$.
\end{prop}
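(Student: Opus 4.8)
The plan is to prove the statement stalk-by-stalk, reducing everything to a purely local algebraic fact: for a fixed cone $\sigma$ the ring homomorphism $\delta_\sigma: \mathcal{A}_{\Sigma,\sigma} = A(st({\bf o}_\sigma)) \to A(X_\sigma) = \mathcal{B}_{\Sigma,\sigma}$ is a quotient by an ideal generated by (images of) units in the bigger semigroup algebra, which becomes manifest once one writes $A(st({\bf o}_\sigma)) = \bbC[x_1^\pm,\dots,x_d^\pm]\otimes \bbC[y_1,\dots,y_{n-d}]$ — using the splitting and smoothness as in Section \ref{some lemmas} — so that $A(X_\sigma) = A(st({\bf o}_\sigma))/(x_1-1,\dots,x_d-1)$ when $X$ is smooth, and more generally $A(X_\sigma)$ is obtained from $A(st({\bf o}_\sigma))$ by passing to the subsemigroup $M_\sigma \cap \sigma^\vee$ of $M\cap\sigma^\vee$. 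Wait — I should be careful: in the general (normal, not necessarily smooth) toric setting the claim of the proposition is still being asserted, so I want an argument that does not invoke smoothness. The right framing is: $\delta_\sigma$ is a surjection of $M$-graded (resp. $M_\sigma$-graded) algebras, and the key local claim is that extension of scalars $-\otimes_{\mathcal{A}_{\Sigma,\sigma}} \mathcal{B}_{\Sigma,\sigma}$ is an equivalence from $M$-graded $\mathcal{A}_{\Sigma,\sigma}$-modules to $M_\sigma$-graded $\mathcal{B}_{\Sigma,\sigma}$-modules. First I would establish this local statement; then I would assemble the sheaf-theoretic equivalence from it.

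For the local statement, the essential point is that an $M$-graded module $V$ over the semigroup algebra $A := \bbC[M\cap\sigma^\vee]$ is the same data as an $M_\sigma$-graded module over $B := \bbC[M_\sigma \cap \sigma^\vee]$. Concretely: $M$ sits in the exact sequence $0 \to M^\sigma \to M \to M_\sigma \to 0$ where $M^\sigma = \sigma^\perp \cap M$ is the character group of $T/T_\sigma$, and $\sigma^\vee$ contains the subspace $M^\sigma_\bbR$ as a linear subspace (its lineality space), so $A = \bbC[M^\sigma] \otimes_{\bbC} \bbC[M_\sigma \cap \sigma^\vee] = \bbC[M^\sigma]\otimes_\bbC B$, i.e. $A$ is a Laurent-polynomial extension of $B$ in the $M^\sigma$-directions. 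Then the functor $V \mapsto V \otimes_A B = V/(\lambda - 1 : \lambda \in M^\sigma)V$ (equivalently, pick out the $M^\sigma$-invariant part of each $M_\sigma$-graded piece) and its inverse $W \mapsto \bbC[M^\sigma]\otimes_\bbC W$ are mutually inverse equivalences — this is just the statement that graded modules over a Laurent polynomial ring $R[t^\pm]$, graded compatibly so that $t$ has degree a generator being killed, are the same as graded modules over $R$. I would verify that these functors are exact, that the unit and counit are isomorphisms on homogeneous components, and that they are compatible with the restriction maps $\delta_\tau, \delta_\sigma$ via the commutative square \eqref{comm diag for morph redo} — the latter amounts to the compatibility $M^\sigma \subset M^\tau$ for $\tau \subset \sigma$ together with the fact that the square of semigroup algebras is a pushout in the appropriate sense.

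With the local equivalence in hand, the sheaf statement is formal: a sheaf on $\Sigma$ is a collection $\{F_\sigma\}$ with transition maps (as recalled right after the definition of the topology), $\delta^*$ acts stalkwise by the local equivalence and on transitions by the naturality just discussed, and one builds the quasi-inverse the same way. Fully-faithfulness and essential surjectivity follow stalkwise. Finiteness of generation is preserved in both directions since $A$ is a finite-type algebra over $B$ only in the "Laurent" directions — more precisely $B$ is a quotient of $A$, so $-\otimes_A B$ sends f.g. to f.g., and conversely a f.g. $B$-module $W$ gives $\bbC[M^\sigma]\otimes W$ which is f.g. over $A = \bbC[M^\sigma]\otimes B$ — giving $\mathcal{A}_\Sigma\text{-mod} \simeq \mathcal{B}_\Sigma\text{-mod}$. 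For the "coh" subcategories one checks the defining condition translates: $\mathcal{A}_{\Sigma,\tau}\otimes_{\mathcal{A}_{\Sigma,\sigma}} F_\sigma \xrightarrow{\sim} F_\tau$ holds iff the corresponding $\mathcal{B}$-statement holds, by applying $-\otimes B$ and using that $A(st({\bf o}_\tau)) \otimes_{A(st({\bf o}_\sigma))} - $ and $A(X_\tau)\otimes_{A(X_\sigma)} -$ correspond under the equivalence (again from the pushout square). The main obstacle I anticipate is purely bookkeeping: keeping the two grading groups $M$ and $M_\sigma$ straight and verifying that all the transition/structure maps match up under the equivalence — there is no deep point, but the compatibilities in \eqref{comm diag for morph redo} need to be checked with some care, especially the claim that the square of semigroup rings is co-Cartesian so that iterated extension of scalars behaves well.
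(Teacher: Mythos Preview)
Your local step --- that $\delta_\sigma^*: A(st({\bf o}_\sigma))\text{-Mod} \to A(X_\sigma)\text{-Mod}$ is an equivalence via the Laurent decomposition $A(st({\bf o}_\sigma)) = \bbC[M^\sigma]\otimes_\bbC A(X_\sigma)$ --- is exactly the paper's Lemma \ref{lemma on preserv of weight spaces}, proved the same way.

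Where you diverge is in the global assembly. The paper does not build the quasi-inverse stalkwise; instead it uses the projective generators $\mathcal{A}(m)_{[\sigma]}$ and $\mathcal{B}(m)_{[\sigma]}$, observes that $\delta^*$ matches them, and then checks directly (using part (1) of the local lemma) that $\delta^*$ induces a bijection on $\mathrm{Hom}(\bigoplus \mathcal{A}(m)_{[\sigma]},\mathcal{N})$. Full faithfulness and essential surjectivity then follow formally from two-term presentations by such projectives. Your approach --- defining the inverse on stalks and checking compatibility with the transition maps --- also works, and is arguably more transparent. The paper's route buys you freedom from tracking the transition maps explicitly.

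One correction: your anticipated ``main obstacle'' is a non-issue, and for a reason opposite to what you expect. The square \eqref{comm diag for morph redo} is \emph{not} co-Cartesian when $\tau\subsetneq\sigma$ (since $M^\sigma\subsetneq M^\tau$, the pushout $A(st({\bf o}_\tau))\otimes_{A(st({\bf o}_\sigma))}A(X_\sigma)$ only kills $M^\sigma$, not all of $M^\tau$). But you do not need it to be: what you actually use to assemble the inverse is the identity
\[
\delta_\tau^*\bigl(A(st({\bf o}_\tau))\otimes_{A(st({\bf o}_\sigma))}(-)\bigr)\;\cong\;A(X_\tau)\otimes_{A(X_\sigma)}\delta_\sigma^*(-),
\]
and this follows from plain associativity of tensor products, $B_\tau\otimes_{A_\tau}A_\tau\otimes_{A_\sigma} = B_\tau\otimes_{A_\sigma} = B_\tau\otimes_{B_\sigma}B_\sigma\otimes_{A_\sigma}$, with no pushout hypothesis. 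With that in hand, your stalkwise inverse glues, and the ``mod'' and ``coh'' conditions transfer exactly as you say.
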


\begin{proof} We start with a lemma.

\begin{lemma} \label{lemma on preserv of weight spaces} Fix a cone $\sigma \in \Sigma$ and consider the surjective  homomorphism of algebras $r:A(st({\bf o}_\sigma))\to A(X_\sigma)$. Let $K$ be a graded $A(st({\bf o}_\sigma))$-module and let $r^*K\in A(X_\sigma)\text{-Mod}$ be its extension of scalars. Then

(1) For any $m\in M$ the natural projection
$$\phi :K\to r^*K$$
induces an isomorphism $\phi  :K^m\to (r^*K)^m$ of the $m$-th homogeneous components.

(2) The functor $r^*:A(st({\bf o}_\sigma))\text{-Mod} \to A(X_\sigma)\text{-Mod}$ is an equivalence of categories. It induces the equivalence of subcategories $r^*:A(st({\bf o}_\sigma))\text{-mod} \to A(X_\sigma)\text{-mod}$.
\end{lemma}

\begin{proof} Choose a complementary torus $T^\sigma$, so that
\begin{equation}\label{dec of tori} T=T_\sigma \times T^\sigma
\end{equation}
(Then we have the canonical isomorphism $T^\sigma ={\bf o}_\sigma$). This
induces an isomorphism
$$st({\bf o}_\sigma)=X_\sigma \times {\bf o}_\sigma =X_\sigma \times T^\sigma$$
and hence
$$A(st({\bf o}_\sigma))=A(X_\sigma)\otimes A(T^\sigma)$$
Also \eqref{dec of tori} gives the decomposition of groups of characters $M=M_\sigma \times M^\sigma$, $m=(m_\sigma ,m^\sigma)$ ($M_\sigma$ (resp. $M^\sigma$) is the grading group of $A(X_\sigma)$ (resp. of $A(T^\sigma)$)).

Given a graded $A(st({\bf o}_\sigma))$-module $K$ we may view it as a double graded object $K=K^{\bullet ,\bullet}$ according to the decomposition of the grading groups.
If $\epsilon :A(T^\sigma)\to \bbC$ is the augmentation map (evaluation at the identity of $T^\sigma$), then the homomorphism $r$ is
$$r(a\otimes b)=a\epsilon (b)$$
This means that the canonical projection $\phi :K\to r^*K$ restricts to an isomorphism $\phi :K^{(m_\sigma ,m^\sigma)}\to (r^*K)^{m_\sigma}$
for any $m=(m_\sigma ,m^\sigma)\in M$. This proves (1).

The functor
$$\psi :A(X_\sigma)\text{-Mod}\to A(st({\bf o}_\sigma))\text{-Mod},\quad \psi (N)=A(st({\bf o}_\sigma))\otimes _{A(X_\sigma)}N$$
is the inverse of $\phi$, because every graded $A(T^\sigma)$-module is free. This proves (2).
\end{proof}

Now we can prove Proposition \ref{equiv of cat of modules}.
Note that for any $\sigma \in \Sigma$, $m\in M$ we have
\begin{equation}\label{proj go to proj}
\delta ^*(\mathcal{A}(m)_{[\sigma]})=\mathcal{B}(m)_{[\sigma]}
\end{equation}
Using part (1) of Lemma \ref{lemma on preserv of weight spaces} and the fact that $\delta ^*$ commutes with direct sums, we find that
for any $\mathcal{N}\in \mathcal{A}_\Sigma \text{-Mod}$ and any direct sum
$\bigoplus _{\sigma,m}\mathcal{A}(m)_{[\sigma]}$ we have
\begin{equation}\label{partial ff}
\begin{array}{rcl}Hom (\bigoplus _{\sigma ,m}\mathcal{A}(m)_{[\sigma]},\mathcal{N}) & = & \prod _{\sigma ,m} Hom (\mathcal{A}(m)_{[\sigma]},\mathcal{N})\\
 & = & \prod _{\sigma ,m} \mathcal{N}^m_\sigma\\
 & = & \prod _{\sigma ,m} (\delta ^*\mathcal{N}_\sigma)^m\\
 & = & \prod _{\sigma ,m} Hom (\mathcal{B}(m)_{[\sigma]},\delta ^*\mathcal{N})\\
 & = & Hom (\bigoplus _{\sigma ,m}\mathcal{B}(m)_{[\sigma]},\delta ^*\mathcal{N})\\
 & = &  Hom (\delta^*(\bigoplus _{\sigma ,m} \mathcal{A}(m)_{[\sigma]}),\delta ^*\mathcal{N})
\end{array}
\end{equation}
This implies that $\delta ^*$ is full and faithful. Indeed, let $\mathcal{M},\mathcal{N}\in \mathcal{A}_\Sigma \text{-Mod}$. Choose an exact sequence
$$\mathcal{P}_1\to \mathcal{P}_0\to \mathcal{M}\to 0$$
where each $\mathcal{P}_1,\mathcal{P}_0$ are direct sums of sheaves  $\mathcal{A}(m)_{[\sigma]}$. This gives an exact sequence
$$0\to Hom (\mathcal{M},\mathcal{N})\to Hom (\mathcal{P}_0,\mathcal{N})\to Hom (\mathcal{P}_1,\mathcal{N})$$

On the other hand, since $\delta ^*$ is right exact we have the exact sequence
$$\delta ^*\mathcal{P}_1\to \delta ^*\mathcal{P}_0\to \delta ^*\mathcal{M}\to 0$$
hence the exact sequence
$$0\to Hom (\delta ^*\mathcal{M},\delta ^*\mathcal{N})\to Hom (\delta ^*\mathcal{P}_0,\delta ^*\mathcal{N})\to Hom (\delta ^*\mathcal{P}_1,\delta ^*\mathcal{N})$$
Now use the equality \eqref{partial ff}.

It remains to show that $\delta ^*$ is essentially surjective. Every object in $\mathcal{B}_\Sigma \text{-Mod}$ is the cokernel of a morphism
\begin{equation}\label{short res}
\mathcal{Q}_1\to \mathcal{Q}_0
\end{equation}
where $\mathcal{Q}_1,\mathcal{Q}_0$ are sums of sheaves $\mathcal{B}(m)_{[\sigma]}$. The diagram \eqref{short res} lifts uniquely to a similar diagram
\begin{equation}\label{lift of short res}
\mathcal{P}_1\to \mathcal{P}_0
\end{equation}
in $\mathcal{A}_{\Sigma}\text{-Mod}$. Then $\delta ^*$ maps the cokernel of
\eqref{lift of short res} to cokernel of \eqref{short res}. This proves the equivalence
$$\delta ^*:\mathcal{A}_\Sigma \text{-Mod} \to \mathcal{B}_\Sigma \text{-Mod}$$
The last assertions of Proposition \ref{equiv of cat of modules} are clear.
\end{proof}

\subsection{Functoriality  with respect to morphisms of smooth toric varieties}

Assume that $X_1$ and $X_2$ are two $T$-toric varieties with fans $\Sigma _1$ and $\Sigma _2$. Let $f:X_1\to X_2$ is a morphism of $T$-toric varieties. Such a morphism exists (and then is unique) if and only if any cone $\tau \in \Sigma _1$ is contained in some cone $\sigma \in \Sigma _2$. Let $f:\Sigma _1\to \Sigma _2$ be the map such that  $f(\tau )$ is the smallest cone in $\Sigma _2$ which contains $\tau $. In particular $T_{\tau}\subset T_{f(\tau )}$.

The following holds
\begin{equation}\label{containment of orbits}
f(x_\tau)=x_{f(\tau)},\quad f({\bf o}_{\tau})={\bf o}_{f(\tau)},\quad
f(st({\bf o}_{\tau }))\subset st({\bf o}_{f(\tau)}),\quad
f(X_{\tau})\subset X_{f(\tau)}
\end{equation}
which implies that for any $\tau \in \Sigma _1$ we have the commutative diagram of graded algebras
\begin{equation}\label{comm diag of a and b}
\begin{array}{ccc}
A(st({\bf o}_{f(\tau)})) & \stackrel{f^*}{\to} & A(st({\bf o}_{\tau}))\\
\downarrow & & \downarrow \\
A(X_{f(\tau)}) & \stackrel{f^*}{\to} & A(X_{\tau})
\end{array}
\end{equation}
The morphisms in \eqref{comm diag of a and b} give us morphisms of
sheaves of graded algebras
$$f^{-1}\mathcal{A}_{\Sigma _2}\to \mathcal{A}_{\Sigma _1},\quad
f^{-1}\mathcal{B}_{\Sigma _2}\to \mathcal{B}_{\Sigma _1}$$
which are compatible with the surjections
$\delta _i: \mathcal{A}_{\Sigma _i}\to \mathcal{B}_{\Sigma _i}$.

In particular, we have the morphisms of ringed spaces
$$f:(\Sigma _1,\mathcal{A}_{\Sigma _1})\to (\Sigma _2,\mathcal{A}_{\Sigma _2})\quad \text{and} \quad
f:(\Sigma _1,\mathcal{B}_{\Sigma _1})\to (\Sigma _2,\mathcal{B}_{\Sigma _2})$$
which induce a pair of adjoint functors $(f^*,f_*)$ between the categories of modules $\mathcal{A}_{\Sigma _i}\text{-Mod}$ and
$\mathcal{B}_{\Sigma _i}\text{-Mod}$ respectively.
Clearly the functors $f^*$ preserves the subcategories
$\mathcal{A}_{\Sigma _i}\text{-mod}$ and
$\mathcal{B}_{\Sigma _i}\text{-mod}$.

Commutative diagram \eqref{comm diag of a and b} implies the isomorphisms of compositions
$$\delta _1^*\cdot f^*=f^*\cdot \delta _2^* \quad \text{and} \quad
\delta _2^*\cdot f_*=f_*\cdot \delta _1^*$$

\subsection{$\mathcal{B}_\Sigma$ as a sheaf of functions on the fan}

It is convenient to interpret the stalk $\mathcal{B}_{\Sigma ,\sigma}$ as the algebra of certain functions on the cone $\sigma$.
By definition, we have $\mathcal{B}_{\Sigma ,\sigma}=A(X_\sigma)$ where $X_\sigma \subset X$ is the affine $T_\sigma$-toric variety with the unique fixed point $x_\sigma \in {\bf o}_\sigma$. So $A(X_\sigma)$ is the group algebra $\bbC [M_\sigma ^+]$ of the semi-group of characters $M_\sigma ^+\subset M_\sigma$ of $T_\sigma$ which extend regularly to $X_\sigma$. ($M_\sigma$ is the group of characters $T_\sigma$).

The embedding of tori $T_\sigma \hookrightarrow T$ gives the surjection of groups of characters $M\to M_\sigma$. The preimage of the semigroup $M_{\sigma }^+$ is the semigroup $M\cap \sigma ^\vee$.
The projection $M\cap \sigma ^\vee \to M_\sigma$ is compatible with the surjection of algebras $A(st({\bf o}_\sigma ))\to A(X_\sigma)$ in the sense that the diagram
$$\begin{array}{ccc}
A(st({\bf o}_\sigma )) & = & \bbC [M\cap \sigma ^\vee]\\
\downarrow & & \downarrow \\
A(X_\sigma) & = & \bbC [M_{\sigma }^+]
\end{array}
$$
commutes.

Explicitly, one can realize the semigroup $M_{\sigma}^+$ as follows:
$M_{\sigma }^+$ consists of functions on the cone $\sigma$ of the form $exp(l)$ where $l$ is a linear function on $\sigma$ which takes nonnegative integer values on the set $\sigma \cap N$.

If $\tau \subset \sigma$ then the structure homomorphism  $\mathcal{B}_{\Sigma ,\sigma}\to \mathcal{B}_{\Sigma ,\tau}$ coincides with the natural restriction of functions $\bbC [M_{\sigma} ^+]\to \bbC [M_{\tau }^+]$.

Since we assume that the variety $X$ is smooth, each algebra $\mathcal{B}_{\Sigma ,\sigma}$ is isomorphic to a polynomial ring.

\section{Co-sheaves of modules on $\Sigma$ and combinatorial Koszul duality}\label{cosh and kos dual}

A cosheaf $G$ on the space $\Sigma$ is defined by a collection $\{G_\sigma \}_{\sigma \in \Sigma}$ and maps $\beta _{\tau \sigma}:G_\tau \to G_\sigma$ for $\tau \subset \sigma$, such that
$\beta _{\tau \sigma} \cdot \beta _{\xi \tau}=\beta _{\xi \sigma}$.

\subsubsection{Torsion co-finite modules}
When dealing with co-sheaves of modules we will be mainly interested in {\it torsion} modules which are in addition {\it co-finite}. Let us define these notions.

\begin{defn} \label{defn tcf} For $\sigma \in \Sigma$ consider the graded algebra $A(X_\sigma)$ and let $I_\sigma \subset A(X_\sigma)$ be its maximal graded ideal. We call a graded $A(X_\sigma)$-module $N$ {\rm torsion}, if it is torsion with respect to the ideal $I_\sigma$. A torsion module is called {\rm co-finite}, if its socle
$$soc (N)=\{n\in N\mid I_\sigma n=0\}$$
is finite dimensional.

A graded $A(st({\bf o}_\sigma))$-module $M$ is called {\rm torsion co-finite} if so is the corresponding graded $A(X_\sigma)$ module
$$r^*(M)=A(X_\sigma )\otimes _{A(st({\bf o}_\sigma))}M$$
where $r:A(st({\bf o}_\sigma))\to A(X_\sigma)$ is the natural surjection.

We denote the corresponding subcategories by $A(X_\sigma)\text{-tcf}\subset
A(X_\sigma)\text{-Mod}$ and $A(st({\bf o}_\sigma))\text{-tcf}\subset A(st({\bf o}_\sigma))\text{-Mod}$ respectively.
\end{defn}

\begin{remark} \label{cor on equiv of tcf} It follows from the Definition \ref{defn tcf} and Lemma \ref{lemma on preserv of weight spaces} that the functor
$$r^*:A(st({\bf o}_\sigma))\text{-tcf}\to A(X_\sigma)\text{-tcf}$$
is an equivalence.
\end{remark}

If $V$ is a graded vector space let $V^\vee$ denote its graded dual.
This gives contravariant endofunctors
$$A(X_\sigma)\text{-Mod} \stackrel{(-)^\vee}{\longrightarrow} A(X_\sigma)\text{-Mod}\quad \text{and}\quad
A(st({\bf o}_\sigma))\text{-Mod} \stackrel{(-)^\vee}{\longrightarrow} A(st({\bf o}_\sigma))\text{-Mod}$$

\begin{lemma} \label{lem sum for one mod} (1) The functorial diagram
\begin{equation}\label{comm diag}\begin{array}{ccc}
A(st({\bf o}_\sigma))\text{-Mod} & \stackrel{(-)^\vee}{\longrightarrow} & A(st({\bf o}_\sigma))\text{-Mod} \\
r^*\downarrow & & \downarrow r^*\\
A(X_\sigma )\text{-Mod} & \stackrel{(-)^\vee}{\longrightarrow} & A(X_\sigma)\text{-Mod}
\end{array}
\end{equation}
commutes.

(2) The commutative diagram \eqref{comm diag} induces  the commutative diagram of equivalences (and anti-equivalences)
\begin{equation}\label{comm diag of equiv}
\begin{array}{ccc}
A(st({\bf o}_\sigma))\text{-mod} & \stackrel{(-)^\vee}{\longrightarrow} & A(st({\bf o}_\sigma))\text{-tcf} \\
r^*\downarrow & & \downarrow r^*\\
A(X_\sigma )\text{-mod} & \stackrel{(-)^\vee}{\longrightarrow} & A(X_\sigma)\text{-tcf}
\end{array}
\end{equation}
\end{lemma}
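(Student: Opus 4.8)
The plan is as follows.

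\emph{Part (1).} I would reduce to the product picture used in the proof of Lemma~\ref{lemma on preserv of weight spaces}. Fix a splitting $T=T_\sigma\times T^\sigma$, so that $A(st({\bf o}_\sigma))=A(X_\sigma)\otimes_{\bbC}A(T^\sigma)$ with $r=\mathrm{id}\otimes\epsilon$, and let $\psi(N)=A(st({\bf o}_\sigma))\otimes_{A(X_\sigma)}N=A(T^\sigma)\otimes_{\bbC}N$ be the quasi-inverse of $r^*$ provided by Lemma~\ref{lemma on preserv of weight spaces}(2). Since $A(T^\sigma)$ is a Laurent polynomial ring, it has a canonical monomial basis and one-dimensional homogeneous components, which yields a natural identification $(A(T^\sigma)\otimes_{\bbC}N)^\vee\cong A(T^\sigma)\otimes_{\bbC}N^\vee$ obtained by matching bihomogeneous components; that is, $\psi(N^\vee)\cong\psi(N)^\vee$ naturally, as $A(st({\bf o}_\sigma))$-modules. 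Applying $r^*=\psi^{-1}$ with $N=r^*M$ gives a natural isomorphism $r^*(M^\vee)\cong(r^*M)^\vee$, i.e.\ the commutativity of \eqref{comm diag}. (Equivalently one checks this on homogeneous components: by Lemma~\ref{lemma on preserv of weight spaces}(1), choosing a lift $m$ of $\bar m\in M_\sigma$ via the splitting, both sides have $\bar m$-th component canonically $(M^{-m})^*$, compatibly with the $A(X_\sigma)$-action and functorially in $M$.)

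\emph{Part (2), reduction.} Granting part (1), the square \eqref{comm diag of equiv} is a restriction of \eqref{comm diag}; its vertical arrows are equivalences by Lemma~\ref{lemma on preserv of weight spaces}(2) and Remark~\ref{cor on equiv of tcf}, and by Definition~\ref{defn tcf} together with part~(1) the module $M^\vee$ is torsion co-finite iff $(r^*M)^\vee$ is. Hence it suffices to prove that the bottom row $(-)^\vee\colon A(X_\sigma)\text{-mod}\to A(X_\sigma)\text{-tcf}$ is a well-defined anti-equivalence with quasi-inverse again $(-)^\vee$; the top row is then automatically an anti-equivalence, and the square commutes.

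\emph{Part (2), the heart.} Write $A=A(X_\sigma)$, $\mathfrak m=I_\sigma$; since $X$ is smooth, $A$ is a graded polynomial ring whose variables have degrees forming a basis of $M_\sigma$, so $\mathfrak m$ is generated in positive degrees, $A$ has finite-dimensional graded components, and the grading semigroup is pointed. The proof rests on: (a) $(-)^\vee$ is exact on $A\text{-Mod}$, being $\mathrm{Hom}_{\bbC}(-,\bbC)$ degreewise; (b) a graded module with finite-dimensional components is canonically its own double graded dual; (c) \emph{every} $N\in A\text{-tcf}$ has finite-dimensional graded components and support bounded above. For (c) I would run the socle filtration $0\subset N_1\subset N_2\subset\cdots$, $N_j=\{n\in N:\mathfrak m^jn=0\}$, which exhausts $N$ by torsion: multiplication embeds $N_{j+1}/N_j\hookrightarrow\mathrm{Hom}_{\bbC}(\mathfrak m/\mathfrak m^2,\,N_j/N_{j-1})$, and since $\mathfrak m/\mathfrak m^2$ is finite-dimensional and sits in strictly positive degrees while $N_1=\mathrm{soc}(N)$ is finite-dimensional, the support of $N_j/N_{j-1}$ moves down in degree; as the grading semigroup is pointed, each fixed graded component of $N$ is reached at a finite stage and stays finite-dimensional, and $N$ vanishes outside the cone below the support of $\mathrm{soc}(N)$. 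Granting (a)--(c): if $N$ is finitely generated then $N^\vee$ is $\mathfrak m$-torsion (a homogeneous functional on $N^c$ is killed by $\mathfrak m^k$ for $k\gg0$ since $N$ is bounded below) and $\mathrm{soc}(N^\vee)\cong(N/\mathfrak m N)^\vee$ is finite-dimensional by graded Nakayama, so $N^\vee\in A\text{-tcf}$; conversely if $N\in A\text{-tcf}$ then by (c) $N^\vee$ has finite-dimensional components and is bounded below, $(N^\vee/\mathfrak m N^\vee)^\vee\cong\mathrm{soc}(N^{\vee\vee})=\mathrm{soc}(N)$ is finite-dimensional, hence $N^\vee/\mathfrak m N^\vee$ is finite-dimensional and $N^\vee$ is finitely generated by graded Nakayama. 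By (b), $N^{\vee\vee}\cong N$ in both cases, giving essential surjectivity and full faithfulness ($\phi\mapsto\phi^{\vee\vee}$ being the identity under biduality), so $(-)^\vee$ is the claimed anti-equivalence.

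\emph{Main obstacle.} The one genuinely non-formal point is ingredient (c): extracting finite-dimensionality of the graded pieces and boundedness of the support from the torsion co-finite hypothesis. Everything else --- the comparison in part (1), exactness of graded duality, biduality for gradings of finite type, and the graded Nakayama lemma --- is routine bookkeeping, after which the interchange of ``finitely generated'' and ``torsion co-finite'' under $(-)^\vee$ follows formally.
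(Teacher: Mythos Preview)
Your proof is correct, and for part~(1) it is essentially the paper's argument: both use the splitting $T=T_\sigma\times T^\sigma$ and the resulting identification of bihomogeneous components $K^{(m_\sigma,m^\sigma)}\cong (r^*K)^{m_\sigma}$ to see that graded duality commutes with $r^*$ componentwise.

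For part~(2) you follow the same underlying idea as the paper but supply considerably more detail. The paper's proof is a single sentence: after noting that the vertical arrows are equivalences (Lemma~\ref{lemma on preserv of weight spaces} and Remark~\ref{cor on equiv of tcf}), it simply asserts that the horizontal functors $(-)^\vee$ are anti-equivalences ``since all the modules in question have finite dimensional homogeneous components.'' You instead \emph{prove} this finite-dimensionality for torsion co-finite modules (your ingredient~(c), via the socle filtration and the pointedness of $M_\sigma^+$), and then spell out via graded Nakayama why $(-)^\vee$ exchanges $A(X_\sigma)\text{-mod}$ and $A(X_\sigma)\text{-tcf}$. So the key observation is the same---biduality on modules with finite-dimensional graded pieces---but you have filled in exactly the step the paper leaves implicit, and correctly identified it as the only non-formal point.
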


\begin{proof} We use notation of Lemma \ref{lemma on preserv of weight spaces} and its proof. Namely, choosing a splitting $T=T_\sigma \times T^\sigma$ we may consider each $K\in A(st({\bf o}_\sigma))\text{-Mod}$ as a bigraded object. Then for any $K\in A(st({\bf o}_\sigma))\text{-Mod}$ the natural map $\phi :K\to r^*K$ identifies the component $K^{(m_\sigma ,m^\sigma)}$ with the component $(r^*K)^{m_\sigma}$.
So we have the commutative diagram
$$\begin{array}{ccccc} K^{(m_\sigma ,m^\sigma)} & \stackrel{(-)^\vee}{\mapsto} & Hom (K^{(m_\sigma ,m^\sigma)},\bbC) & = & (K^\vee)^{(-m_\sigma, -m^\sigma)} \\
\phi \downarrow & & & & \downarrow \phi \\
(r^*K)^{m_\sigma } & \stackrel{(-)^\vee}{\mapsto} & Hom ((r^*K)^{m_\sigma },\bbC) & = &  (r^*K^\vee)^{-m_\sigma}
\end{array}
$$
which proves (1).

We know that the vertical arrows in the diagram \eqref{comm diag of equiv} are equivalences (Lemma \ref{lemma on preserv of weight spaces} are Remark \ref{cor on equiv of tcf}). So for (2) it suffices to notice that the functors $(-)^\vee$ in \eqref{comm diag of equiv} are anti-equivalences, since all the modules in question have finite dimensional homogeneous components.
\end{proof}

\subsubsection{Categories of co-sheaves of modules}

\begin{defn} A co-sheaf of $\mathcal{A}_\Sigma$-modules is a collection $\{C_\sigma \}_{\sigma \in \Sigma}$ where $C_\sigma $ is an $M$-graded $\mathcal{A}_{\Sigma,\sigma}$-module, and for each pair of cones $\tau \subset \sigma $ a morphism $\beta _{\tau \sigma}:C_\tau \to C_\sigma$ of graded $\mathcal{A}_{\Sigma ,\sigma}$-modules. These morphisms must satisfy
$$\beta_{\sigma \xi}\cdot \beta _{\tau \sigma}=\beta _{\tau \xi}$$
The abelian category of co-sheaves of $\mathcal{A}_\Sigma$-modules is denoted by $\text{co-}\mathcal{A}_{\Sigma}\text{-Mod}$.

We have the full abelian subcategory $\text{co-}\mathcal{A}_{\Sigma }\text{-tcf} \subset
\text{co-}\mathcal{A}_{\Sigma}\text{-Mod}$ consisting of objects $(C_\sigma )$ such that for each $\sigma \in \Sigma$, $C_\sigma \in A(st({\bf o}_\sigma))\text{-tcf}$.

In exactly the same way one defines the abelian category $\text{co-}\mathcal{B}_{\Sigma}\text{-Mod}$ of co-sheaves of $\mathcal{B}_\Sigma$-modules and its full abelian subcategory
$\text{co-}\mathcal{B}_{\Sigma}\text{-tcf}$
\end{defn}

We have the contravariant functors
\begin{equation}\label{def of vee for sheaves} \mathcal{A}_\Sigma \text{-Mod} \stackrel{(-)^\vee}{\longrightarrow} \text{co-}\mathcal{A}_\Sigma \text{-Mod}\quad \text{and}\quad
\mathcal{B}_\Sigma \text{-Mod} \stackrel{(-)^\vee}{\longrightarrow} \text{co-}\mathcal{B}_\Sigma \text{-Mod}
\end{equation}
which send a sheaf $F=\{F_\sigma\}$ to the co-sheaf $F^\vee :=\{F^\vee _{\sigma}\}$, where $F^\vee _\sigma$ is the graded dual of the module $F_\sigma$. The following proposition is a direct consequence of Lemma
\ref{lem sum for one mod}.

\begin{prop} (1) The functorial diagram
\begin{equation}\label{comm diag of sheaves}\begin{array}{ccc}
\mathcal{A}_\Sigma \text{-Mod} & \stackrel{(-)^\vee}{\longrightarrow} & \text{co-}\mathcal{A}_\Sigma \text{-Mod} \\
\delta ^*\downarrow & & \downarrow \delta ^*\\
\mathcal{B}_\Sigma \text{-Mod} & \stackrel{(-)^\vee}{\longrightarrow} &
\text{co-}\mathcal{B}_\Sigma \text{-Mod}
\end{array}
\end{equation}
commutes.

(2) The commutative diagram \eqref{comm diag of sheaves} induces the commutative diagram of equivalences (and anti-equivalences) of categories
\begin{equation}\label{comm diag of equiv of sheaves}\begin{array}{ccc}
\mathcal{A}_\Sigma \text{-mod} & \stackrel{(-)^\vee}{\longrightarrow} & \text{co-}\mathcal{A}_\Sigma \text{-tcf} \\
\delta ^*\downarrow & & \downarrow \delta ^*\\
\mathcal{B}_\Sigma \text{-mod} & \stackrel{(-)^\vee}{\longrightarrow} &
\text{co-}\mathcal{B}_\Sigma \text{-tcf}
\end{array}
\end{equation}
\end{prop}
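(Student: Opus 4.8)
The plan is to deduce both statements from Lemma~\ref{lem sum for one mod} by arguing one cone at a time, since all four functors in the square are computed stalkwise. First I would spell this out: for a sheaf $F=\{F_\sigma\}$ the dual cosheaf $F^\vee$ has $\sigma$-stalk $(F^\vee)_\sigma=F_\sigma^\vee$, with the co-restriction $F_\tau^\vee\to F_\sigma^\vee$ (for $\tau\subset\sigma$) the graded transpose of the restriction $\alpha_{\sigma\tau}\colon F_\sigma\to F_\tau$, and dually for cosheaves; while $\delta^*=\mathcal{B}_\Sigma\otimes_{\mathcal{A}_\Sigma}(-)$ has $\sigma$-stalk the extension of scalars $r_\sigma^*$ along $r_\sigma\colon A(st({\bf o}_\sigma))\to A(X_\sigma)$. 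Consequently, for $F\in\mathcal{A}_\Sigma\text{-Mod}$ the two composites around the square yield cosheaves of $\mathcal{B}_\Sigma$-modules with $\sigma$-stalks $r_\sigma^*(F_\sigma^\vee)$ and $(r_\sigma^*F_\sigma)^\vee$. Part~(1) of Lemma~\ref{lem sum for one mod} supplies a natural isomorphism between these for each $\sigma$; the remaining task is to check that these stalkwise isomorphisms commute with the transition maps of the two cosheaves, which is a straightforward naturality verification using the functoriality already present in Lemma~\ref{lem sum for one mod}. This will establish part~(1).

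For part~(2), I would first invoke Proposition~\ref{equiv of cat of modules} to know that the two vertical $\delta^*$ are equivalences. It then remains to show that $(-)^\vee$ restricts to an anti-equivalence $\mathcal{A}_\Sigma\text{-mod}\to\text{co-}\mathcal{A}_\Sigma\text{-tcf}$, and likewise for $\mathcal{B}_\Sigma$. If $F\in\mathcal{A}_\Sigma\text{-mod}$ then each $F_\sigma$ is a finitely generated graded $A(st({\bf o}_\sigma))$-module, so Lemma~\ref{lem sum for one mod}(2) puts $F_\sigma^\vee$ in $A(st({\bf o}_\sigma))\text{-tcf}$, i.e. $F^\vee\in\text{co-}\mathcal{A}_\Sigma\text{-tcf}$. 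Conversely, if $C\in\text{co-}\mathcal{A}_\Sigma\text{-tcf}$ then each $C_\sigma$ is torsion co-finite, hence, again by Lemma~\ref{lem sum for one mod}(2), is the graded dual of a finitely generated module; in particular its homogeneous components are finite dimensional, so the canonical biduality map $C\to(C^\vee)^\vee$ is an isomorphism and $C^\vee\in\mathcal{A}_\Sigma\text{-mod}$. Hence $(-)^\vee$ is an anti-equivalence that is its own quasi-inverse, verbatim for $\mathcal{B}_\Sigma$ as well; restricting the isomorphism of part~(1) to these subcategories then gives the commutativity of the square in \eqref{comm diag of equiv of sheaves}.

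I expect the only real obstacle to be the transition-map compatibility in part~(1) --- everything else is formal once Proposition~\ref{equiv of cat of modules} and the stalkwise content of Lemma~\ref{lem sum for one mod} are in hand. One auxiliary point I would flag explicitly is that objects of $\text{co-}\mathcal{A}_\Sigma\text{-tcf}$ and $\text{co-}\mathcal{B}_\Sigma\text{-tcf}$ automatically have finite-dimensional homogeneous components, which is exactly what makes the double-dual map invertible; this is read off from the identification in Lemma~\ref{lem sum for one mod}(2) of torsion co-finite modules with graded duals of finitely generated ones.
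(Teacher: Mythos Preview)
Your proposal is correct and follows the same approach as the paper, which simply says ``This follows from Lemma~\ref{lem sum for one mod}.'' You have merely unpacked what that one-line proof means: apply the lemma stalkwise, check the transition-map compatibility, and invoke Proposition~\ref{equiv of cat of modules} (or equivalently the stalkwise equivalences in Lemma~\ref{lem sum for one mod}(2) and Remark~\ref{cor on equiv of tcf}) for the vertical arrows.
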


\begin{proof} This follows from Lemma \ref{lem sum for one mod}.
\end{proof}

\begin{remark} The category $\mathcal{A}_\Sigma \text{-mod}$ has enough projectives. Namely the object $\mathcal{A}(m)_{[\sigma]}$ is projective for every $\sigma \in \Sigma$ and $m\in M$ (Lemma \ref{lemma on proj res in a-mod}). Hence the co-sheaves $(\mathcal{A}(m)_{[\sigma]})^\vee$ are injective objects in $\text{co-}\mathcal{A}_\Sigma \text{-tcf}$. Similarly for the categories  $\mathcal{B}_\Sigma \text{-mod}$ and
$\text{co-}\mathcal{B}_\Sigma \text{-tcf}$.
\end{remark}

Let $\sigma \in \Sigma$ and let $\mathcal{M}$ be a sheaf (resp. a co-sheaf) on $\Sigma$. We denote by $\mathcal{M}_{\{\sigma\}}$ the sheaf (resp. a co-sheaf) such that
$$(\mathcal{M}_{\{\sigma\}})_\tau =\begin{cases} \mathcal{M}_\sigma,\ \text{if $\sigma =\tau$};\\
0,\quad  \text{otherwise}
\end{cases}
$$

\subsection{Combinatorial Koszul duality}

The following theorem is a variant of the Koszul duality which was constructed in \cite{BrLu}.

\begin{thm} \label{koz dual thm} There exists a natural (covariant) equivalence of triangulated categories
$${\bf K}_A:D^b(\mathcal{A}_{\Sigma}\text{-mod})\to D^b(\text{co-}\mathcal{A}_{\Sigma}\text{-tcf})$$
It has the property that ${\bf K}_A(\mathcal{A}_{\{\sigma \}})\simeq \mathcal{A}_{[\sigma]}^\vee [\dim (\sigma)]$ and ${\bf K}_A(\mathcal{A}_{[\sigma ]})\simeq \mathcal{A}_{\{\sigma\}}^\vee [\dim (\sigma)]$ for any $\sigma \in \Sigma$.

There is a similar equivalence
$${\bf K}_B:D^b(\mathcal{B}_{\Sigma}\text{-mod})\to D^b(\text{co-}\mathcal{B}_{\Sigma}\text{-tcf})$$
with analogous properties, and the functorial diagram
$$\begin{array}{ccc}{\bf K}_A:D^b(\mathcal{A}_{\Sigma}\text{-mod}) & \to & D^b(\text{co-}\mathcal{A}_{\Sigma}\text{-tcf})\\
\delta ^*\downarrow & & \downarrow \delta ^*\\
{\bf K}_B:D^b(\mathcal{B}_{\Sigma}\text{-mod}) & \to &  D^b(\text{co-}\mathcal{B}_{\Sigma}\text{-tcf})
\end{array}
$$
commutes.
\end{thm}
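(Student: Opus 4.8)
The plan is to build ${\bf K}_A$ directly from the homological structure of $\mathcal{A}_\Sigma\text{-mod}$, and to obtain ${\bf K}_B$ together with the commutative square for free. By Proposition \ref{equiv of cat of modules} the functor $\delta^*$ is an equivalence $\mathcal{A}_\Sigma\text{-mod}\simeq\mathcal{B}_\Sigma\text{-mod}$; by the square \eqref{comm diag of equiv of sheaves} it intertwines the graded-dual anti-equivalences and identifies $\text{co-}\mathcal{A}_\Sigma\text{-tcf}$ with $\text{co-}\mathcal{B}_\Sigma\text{-tcf}$; and $\delta^*$ carries $\mathcal{A}(m)_{[\sigma]}$, $\mathcal{A}(m)_{\{\sigma\}}$ and $(\mathcal{A}(m)_{[\sigma]})^\vee$ to $\mathcal{B}(m)_{[\sigma]}$, $\mathcal{B}(m)_{\{\sigma\}}$ and $(\mathcal{B}(m)_{[\sigma]})^\vee$ (by \eqref{proj go to proj}, Lemma \ref{lemma on preserv of weight spaces} and the square above). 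Hence, once ${\bf K}_A$ is constructed with the stated object values, the functor ${\bf K}_B:=\delta^*\circ{\bf K}_A\circ(\delta^*)^{-1}$ has the stated object values as well and the square commutes tautologically. (One may equally well run the construction inside $\mathcal{B}_\Sigma\text{-mod}$; since $X$ is smooth every stalk $\mathcal{B}_{\Sigma,\sigma}$ is a polynomial ring, and the pair $(\Sigma,\mathcal{B}_\Sigma)$ with its sheaves and cosheaves of torsion-cofinite modules is essentially the combinatorial framework of \cite{BrLu}.)

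By Lemma \ref{lemma on proj res in a-mod} the category $\mathcal{A}_\Sigma\text{-mod}$ has enough projectives (the finite direct sums $\mathcal{P}$ of the $\mathcal{A}(m)_{[\sigma]}$) and finite projective dimension, so $K^b(\mathcal{P})\simeq D^b(\mathcal{A}_\Sigma\text{-mod})$; applying the anti-equivalence $(-)^\vee$ of \eqref{comm diag of equiv of sheaves}, dually $\text{co-}\mathcal{A}_\Sigma\text{-tcf}$ has enough injectives (the finite direct sums $\mathcal{I}$ of the $(\mathcal{A}(m)_{[\sigma]})^\vee$) and finite injective dimension, so $K^b(\mathcal{I})\simeq D^b(\text{co-}\mathcal{A}_\Sigma\text{-tcf})$. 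The key local input is a canonical Koszul resolution of the skyscrapers: for each $\sigma$ and $m$ the sheaf $\mathcal{A}(m)_{\{\sigma\}}$ has a canonical finite resolution $\mathcal{K}^\bullet_\sigma(m)\to\mathcal{A}(m)_{\{\sigma\}}$ in $\mathcal{P}$ with $\mathcal{K}^{-k}_\sigma(m)=\bigoplus_{\tau\subseteq\sigma,\ \dim\tau=\dim\sigma-k}\mathcal{A}(m)_{[\tau]}$ and differential built, with the usual simplicial signs, out of the canonical maps $\mathcal{A}(m)_{[\tau]}\to\mathcal{A}(m)_{[\tau']}$ of Lemma \ref{lemma on proj res in a-mod}(1) attached to the codimension-one face inclusions $\tau\subset\tau'\subseteq\sigma$; exactness is checked stalkwise, the stalk at a face $\xi\subseteq\sigma$ being $\mathcal{A}_{\Sigma,\xi}(m)$ tensored with the augmented simplicial chain complex of the simplex on the rays of $\sigma$ not lying in $\xi$, which is acyclic for $\xi\subsetneq\sigma$ and equals $[\mathcal{A}_{\Sigma,\sigma}(m)\xrightarrow{\sim}\mathcal{A}_{\Sigma,\sigma}(m)]$ for $\xi=\sigma$ (this is where we use that $\sigma$ is simplicial, i.e.\ $X$ smooth). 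In particular $\mathcal{A}(m)_{\{\sigma\}}$ has projective dimension exactly $\dim\sigma$, which is the origin of the shifts $[\dim\sigma]$, and $(\mathcal{K}^\bullet_\sigma(m))^\vee$ is a finite resolution in $\mathcal{I}$ of the skyscraper cosheaf $(\mathcal{A}(m)_{\{\sigma\}})^\vee$. The functor ${\bf K}_A$ is then the derived convolution $F\mapsto F\otimes^{\bf L}_{\mathcal{A}_\Sigma}\mathcal{K}$ with the complex of kernels $\mathcal{K}$ on $\Sigma\times\Sigma$ obtained by gluing the local resolutions $\mathcal{K}^\bullet_\sigma(m)$ (a sheaf of $\mathcal{A}_\Sigma$-modules in one variable, the graded dual of one in the other, with the internal-grading reversal typical of Koszul duality); being a derived tensor with a bounded complex of suitably (co)projective kernels, it descends to a triangulated functor $D^b(\mathcal{A}_\Sigma\text{-mod})\to D^b(\text{co-}\mathcal{A}_\Sigma\text{-tcf})$. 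Evaluating the convolution on the projective $\mathcal{A}_{[\sigma]}$ reads off the shifted dual resolution and gives ${\bf K}_A(\mathcal{A}_{[\sigma]})\simeq\mathcal{A}^\vee_{\{\sigma\}}[\dim\sigma]$; the second formula ${\bf K}_A(\mathcal{A}_{\{\sigma\}})\simeq\mathcal{A}^\vee_{[\sigma]}[\dim\sigma]$ follows by applying ${\bf K}_A$ to the resolution $\mathcal{K}^\bullet_\sigma(0)$ of $\mathcal{A}_{\{\sigma\}}$ and checking, by the dual Koszul computation, that the resulting total complex of injectives is quasi-isomorphic to $(\mathcal{A}_{[\sigma]})^\vee$ in cohomological degree $-\dim\sigma$.

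It remains to see ${\bf K}_A$ is an equivalence. First, the $\mathcal{A}(m)_{[\sigma]}$ generate $D^b(\mathcal{A}_\Sigma\text{-mod})$ as a triangulated category (Lemma \ref{lemma on proj res in a-mod}), so their images $\mathcal{A}^\vee_{\{\sigma\}}[\dim\sigma]$ (and their $m$-twists) generate the essential image of ${\bf K}_A$; and these skyscraper cosheaves generate the target, because each injective $(\mathcal{A}(m)_{[\sigma]})^\vee$ is a finite iterated extension of skyscraper cosheaves $(\mathcal{A}(m)_{\{\xi\}})^\vee$, $\xi\subseteq\sigma$ (dualize the filtration of $\mathcal{A}(m)_{[\sigma]}$ by support, whose subquotients are exactly the $\mathcal{A}(m)_{\{\xi\}}$). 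Hence ${\bf K}_A$ is essentially surjective. For full faithfulness it suffices to show ${\bf K}_A$ is bijective on graded $\operatorname{Hom}^\bullet$ between the generators $\mathcal{A}(m)_{[\sigma]}$. By projectivity of the source, $\operatorname{Hom}^\bullet_{D^b}(\mathcal{A}(m)_{[\sigma]},\mathcal{A}(m')_{[\tau]})$ is concentrated in degree $0$ and, by Lemma \ref{lemma on proj res in a-mod}(1), equals a homogeneous component of $\mathcal{A}_{\Sigma,\sigma}$ (zero- or one-dimensional) if $\sigma\subseteq\tau$ and $0$ otherwise. On the other side, $(-)^\vee$ of \eqref{comm diag of equiv of sheaves} and the resolution $\mathcal{K}^\bullet_\tau$ identify $\operatorname{Hom}^\bullet_{D^b}({\bf K}_A\mathcal{A}(m)_{[\sigma]},{\bf K}_A\mathcal{A}(m')_{[\tau]})$ with an $\operatorname{Ext}^{\dim\tau-\dim\sigma}_{\mathcal{A}_\Sigma\text{-mod}}$ between skyscrapers; feeding $\mathcal{K}^\bullet_\tau$ into this $\operatorname{Ext}$, the only summand mapping nontrivially to the skyscraper at $\sigma$ is $\mathcal{A}(m')_{[\sigma]}$ (present iff $\sigma\subseteq\tau$, in homological degree $-(\dim\tau-\dim\sigma)$), so the answer is again the corresponding homogeneous component of $\mathcal{A}_{\Sigma,\sigma}$ --- the weight-matching here being precisely the Koszul property of our combinatorial data. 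Unwinding the definition of ${\bf K}_A$ identifies the comparison map between the two sides with the identity, so ${\bf K}_A$ is fully faithful, hence an equivalence.

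The substantive difficulty is not in these verifications but in the construction of the kernel complex $\mathcal{K}$ itself: assembling the local Koszul resolutions into a single bounded complex with signs for which the total differential squares to zero, for which the convolution is a genuine triangulated functor and not merely an assignment on objects, and with the correct interplay between the cohomological shifts $[\dim\sigma]$ and the internal $M$-grading --- this is the technical core of \cite{BrLu}, which transfers to the present categories essentially verbatim (most transparently for $\mathcal{B}_\Sigma$, whose stalks are polynomial rings). A minor further point is to keep everything finitely generated, respectively torsion-cofinite, along the way, so that ${\bf K}_A$ really lands in $D^b(\mathcal{A}_\Sigma\text{-mod})$ and $D^b(\text{co-}\mathcal{A}_\Sigma\text{-tcf})$ rather than in the larger ambient categories; this is handled by Lemmas \ref{lemma on proj res in a-mod} and \ref{lem sum for one mod}.
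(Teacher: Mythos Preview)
Your outline has the right shape --- Koszul-type resolutions of the skyscrapers, check the functor on generators, deduce full faithfulness and essential surjectivity --- but there is a genuine gap: you never actually construct ${\bf K}_A$. You write ``${\bf K}_A$ is the derived convolution $F\mapsto F\otimes^{\bf L}_{\mathcal{A}_\Sigma}\mathcal{K}$ with the complex of kernels $\mathcal{K}$ on $\Sigma\times\Sigma$ obtained by gluing the local resolutions'' and then, correctly, flag this gluing as ``the substantive difficulty'' and defer it to \cite{BrLu}. But your subsequent verifications (object values, full faithfulness) all presuppose that this functor exists and is triangulated; without an explicit description you cannot, for instance, ``unwind the definition of ${\bf K}_A$'' to see that the comparison map on Hom's is the identity. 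So as written the argument is circular at its key step.

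The paper's point is that the construction is in fact elementary and explicit, with no need for a kernel on $\Sigma\times\Sigma$: one takes the single $\Sigma$-diagram $\mathcal{K}=\{\mathcal{A}_{[\sigma]}^\vee\}$ in $\text{co-}\mathcal{A}_\Sigma\text{-tcf}$ (with the obvious projections as structure maps), and for $\mathcal{N}$ with free stalks defines ${\bf K}(\mathcal{N})$ to be the cellular complex of the $\Sigma$-diagram $\{\mathcal{A}_{[\sigma]}^\vee\otimes_{\mathcal{A}_{\Sigma,\sigma}}\mathcal{N}_\sigma\}$; then extend by projective resolutions. Your local Koszul resolution $\mathcal{K}^\bullet_\sigma$ is visible inside this: it is exactly what the cellular complex produces when $\mathcal{N}=\mathcal{A}_{[\sigma]}$. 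The paper's full-faithfulness step is also slicker than yours: rather than computing $\operatorname{Hom}^\bullet$ between two projectives (which forces you into an $\operatorname{Ext}^{\dim\tau-\dim\sigma}$ between skyscraper cosheaves on the target side), one checks ${\bf K}$ on $\operatorname{Hom}(\mathcal{A}_{[\tau]}(m),\mathcal{A}_{\{\xi\}}(n))$. The source is projective and the image of the target is injective, so both sides are plain $\operatorname{Hom}$'s in degree $0$, each equal to $(\mathcal{A}_{\Sigma,\tau})^{n-m}$ when $\tau=\xi$ and zero otherwise.
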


We recall the proof of this theorem essentially following \cite{BrLu}.

\begin{defn} Let $\mathcal{C}$ be an additive category.

(a) A $\Sigma$-diagram in $\mathcal{C}$ is a collection $\{M_\sigma \} _{\sigma \in \Sigma}$ of objects of $\mathcal{C}$ together
with morphisms $p_{\sigma \tau} : M_\sigma \to M_\tau$ for $\tau \subset \sigma$, satisfying
$p_{\tau \xi}\cdot p_{\sigma \tau} =p_{\sigma \xi}$ whenever $\xi \subset \tau \subset \sigma$.

(b) Fix an orientation of each cone in $\Sigma$. Then every $\Sigma$-diagram $\mathcal{M}= \{M_\sigma \}$ gives rise to the corresponding cellular complex in $\mathcal{C}$:
\begin{equation}\label{cel complex}
C^\bullet (\mathcal{M}) = \bigoplus _{\dim (\sigma)=n}M_\sigma \to
\bigoplus _{\dim (\tau)=n-1}M_\tau  \to ...
\end{equation}
where the terms $M_\rho$ appear in degree $-\dim (\rho)$, and the differential
is the sum of the maps $p_{\sigma \tau}$ with $\pm $ sign depending on
whether the orientations of $\sigma $ and $\tau$ agree or not.
\end{defn}

\begin{lemma}\label{lemma-acycl-1} Let $\mathcal{M} = \{M_\sigma \}$ be a {\rm constant} $\Sigma$-diagram supported between cones $\sigma$ and $\xi$ in $\Sigma$. That is $M_\tau = M$ for a fixed $M$ if $\xi \subset \tau \subset \sigma$, and $M_\tau =0$ otherwise; for $\xi \subset \tau _1\subset \tau _2\subset \sigma$ the map $p_{\tau _2\tau _1}$ is the identity. If $\sigma \neq \xi$, then the cellular complex $C^\bullet(\mathcal{M})$ is acyclic.
\end{lemma}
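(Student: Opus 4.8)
The plan is to recognize the cellular complex $C^\bullet(\mathcal M)$ of a constant diagram supported between $\xi$ and $\sigma$ as (a shift of) the simplicial/cellular chain complex of a contractible cell complex, and deduce acyclicity from that topological fact. Concretely, the set of cones $\tau$ with $\xi\subset\tau\subset\sigma$ is in order-preserving bijection with the set of faces of the cone $\sigma$ that contain $\xi$ as a face; equivalently, after passing to the quotient $\langle\sigma\rangle/\langle\xi\rangle$, it is the set of \emph{all} faces of the image cone $\bar\sigma$, which is a (nonzero, since $\sigma\neq\xi$) pointed polyhedral cone. The chain complex \eqref{cel complex} with all maps $p_{\tau_2\tau_1}=\mathrm{id}_M$ is then, term by term, $M\otimes_{\bbZ}$ the cellular chain complex (with the chosen orientations as incidence signs) of the cell decomposition of the sphere $S^{k-1}$, $k=\dim\bar\sigma$, obtained by intersecting the proper faces of $\bar\sigma$ with a small sphere around the apex — together with one extra top cell corresponding to $\bar\sigma$ itself (the interior), which fills in the sphere to a ball $D^k$.

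The key steps, in order, are: \emph{(i)} Reduce to the quotient situation and fix terminology: replace $\Sigma$-data between $\xi$ and $\sigma$ by the poset of faces of a pointed cone $\bar\sigma$ of dimension $k\geq 1$, and identify $C^\bullet(\mathcal M)$ up to the overall shift by $\dim\xi$ with $M\otimes_{\bbZ} C_\bullet^{\mathrm{cell}}(D^k)$, where the cell structure on the closed ball $D^k$ has one $j$-cell for each $(j)$-dimensional face of $\bar\sigma$ (with the apex contributing the unique $0$-cell and $\bar\sigma$ itself the unique $k$-cell), and where the differential's signs are exactly the $\pm$ prescribed by the chosen orientations. \emph{(ii)} Observe that this is a legitimate regular CW (in fact polytopal) decomposition of $D^k$: the faces of a pointed cone, cut by a transverse sphere, form the face poset of a polytope, and adding the top cell is standard. \emph{(iii)} Invoke $\tilde H_*(D^k)=0$ together with the fact that cellular homology with the incidence-number differential computes reduced homology once one normalizes the augmentation — i.e. the complex $C_\bullet^{\mathrm{cell}}(D^k)$ in our indexing (degrees $0$ through $k$) is exact. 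Tensoring the exact complex of free $\bbZ$-modules with $M$ preserves exactness, giving acyclicity of $C^\bullet(\mathcal M)$.

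Alternatively, and perhaps cleaner for a self-contained write-up, I would give a direct combinatorial proof of exactness by exhibiting a contracting homotopy: choose a facet $F_0$ of $\bar\sigma$ (or, more symmetrically, a ray $\rho_0$), and send a face $\tau$ to $\tau$ together with the ``cone over $\tau$'' construction $\tau\mapsto$ the face spanned by $\tau$ and $\rho_0$ when $\rho_0\not\subset\tau$, extended by zero otherwise; with the right signs this is the standard proof that the order complex of the face lattice of a cone (a cone over a polytope boundary, hence a ball) is acyclic. The main obstacle, and the only place where care is genuinely required, is \emph{bookkeeping of signs}: one must check that the $\pm$'s coming from the arbitrary chosen orientations of the cones match the incidence numbers of a regular CW complex (they do, because any two orientation choices differ by a diagonal $\pm 1$ change of basis, which conjugates one differential to the other and does not affect acyclicity), and that the homotopy operator is compatible with those signs. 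Everything else — the bijection of posets, flatness of $-\otimes_{\bbZ}M$, the topology of the ball — is routine. In the actual text I expect the author simply cites the fact that the cellular chain complex of a cone (over the boundary complex of $\sigma$) is acyclic, as in the analogous lemma of \cite{BrLu}.
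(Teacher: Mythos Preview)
Your approach is the same as the paper's, which gives a one-line proof: the complex $C^\bullet(\mathcal{M})$ is isomorphic to the \emph{augmented} cellular chain complex of a closed ball of dimension $\dim(\sigma)-\dim(\xi)-1$, hence is acyclic.

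You do, however, have an off-by-one slip in the geometry. Set $k=\dim\sigma-\dim\xi$. Intersecting the nonzero faces of $\bar\sigma$ with a small sphere about the apex produces a regular CW structure on a ball of dimension $k-1$ (not on $S^{k-1}$, and not on $D^k$): a $j$-dimensional face of $\bar\sigma$ with $j\geq 1$ meets $S^{k-1}$ in a $(j-1)$-cell, and the top face $\bar\sigma$ contributes the $(k-1)$-cell filling in that ball. The remaining face, the apex $\{0\}$ (equivalently $\tau=\xi$), does not meet the sphere at all; it supplies the augmentation term in degree $-1$. Thus $C^\bullet(\mathcal{M})$ is, up to shift and $(-)\otimes M$, the \emph{augmented} chain complex of $D^{k-1}$, whose acyclicity is exactly $\tilde H_*(D^{k-1})=0$. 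Your step (iii) as written (``the complex $C_\bullet^{\mathrm{cell}}(D^k)$ \ldots\ is exact'') is false for the unaugmented complex, since any nonempty CW complex has $H_0\neq 0$.

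In the simplicial case relevant here the interval $[\xi,\sigma]$ is a Boolean lattice on $k$ elements, and the complex is literally the augmented chain complex of the simplex $\Delta^{k-1}$; your proposed contracting homotopy (cone over a chosen ray) then reduces to the standard one for a simplex and works cleanly. That alternative argument is fine and would also suffice.
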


\begin{proof} The complex $C^\bullet (\mathcal{M})$ is isomorphic to an augmented cellular chain complex of a closed ball of dimension $\dim(\sigma) - \dim(\xi) - 1$.
\end{proof}

We only construct the equivalence ${\bf K}_A ={\bf K}$ (the equivalence  ${\bf K}_B$ is completely analogous). Consider the $\Sigma$-diagram $\mathcal{K} = \{\mathcal{K}_\sigma\}$ in the category $\text{co-}\mathcal{A}_{\Sigma}\text{-mod}$, where
$\mathcal{K}_\sigma = \mathcal{A}_{[\sigma]}^\vee$ and the maps $p_{\sigma \tau}$ are the natural projections $\mathcal{A}_{[\sigma]}^\vee \to \mathcal{A}_{[\tau]}^\vee$. This diagram $\mathcal{K}$ defines
a covariant functor
$${\bf K}: D^b(\mathcal{A}_{\Sigma}\text{-mod}) \to D^b(\text{co-}\mathcal{A}_{\Sigma}\text{-tcf})$$
in the following way. If $\mathcal{N}=\{\mathcal{N}_\sigma\} \in \mathcal{A}_{\Sigma}\text{-mod}$ is such that each $\mathcal{N}_\sigma$ is a free $\mathcal{A}_{\Sigma ,\sigma}$, the collection
$$\mathcal{K}\otimes _{\mathcal{A}_{\Sigma}}\mathcal{N}=\{\mathcal{K}_\sigma \otimes _{\mathcal{A}_{\Sigma ,\sigma}}\mathcal{N}_\sigma\}$$
is naturally a $\Sigma$-diagram in  $\text{co-}\mathcal{A}_{\Sigma}\text{-tcf}$. Thus its cellular complex $C^\bullet (\mathcal{K}\otimes _{\mathcal{A}_\Sigma}\mathcal{N})$
is a complex in $\text{co-}\mathcal{A}_\Sigma \text{-tcf}$. Given any $\mathcal{N}^\bullet \in D^b(\mathcal{A}_{\Sigma}\text{-mod})$,  define ${\bf K}(\mathcal{N}^\bullet)\in D^b(\text{co-}\mathcal{A}_{\Sigma}\text{-tcf})$ as the totalization of the double complex $\mathcal{K}\otimes _{\mathcal{A}_\Sigma }\mathcal{P}^\bullet$, where $\mathcal{P}^\bullet \to \mathcal{N}^\bullet $ is a projective resolution.

Notice that the functor ${\bf K}$ commutes with the shift of grading action of the group $M$: ${\bf K}(\mathcal{N}^\bullet (m))={\bf K}(\mathcal{N}^\bullet )(m)$

\begin{lemma} \label{lemma half of koz d} For any cone $\sigma $, there are isomorphisms

(1) ${\bf K}(\mathcal{A}_{[\sigma ]})\simeq \mathcal{A}_{\{\sigma\}}^\vee [\dim (\sigma)]$,

(2) ${\bf K}(\mathcal{A}_{\{\sigma \}})\simeq \mathcal{A}_{[\sigma]}^\vee [\dim (\sigma)]$.
\end{lemma}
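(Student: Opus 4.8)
The plan is to compute $\mathbf{K}$ directly on the two objects $\mathcal{A}_{[\sigma]}$ and $\mathcal{A}_{\{\sigma\}}$ using the explicit cellular construction, together with the acyclicity Lemma \ref{lemma-acycl-1}.

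For part (1), note that $\mathcal{A}_{[\sigma]}$ is already projective (Lemma \ref{lemma on proj res in a-mod}), and each of its stalks $(\mathcal{A}_{[\sigma]})_\tau$ equals $\mathcal{A}_{\Sigma,\tau}$ if $\tau\subset\sigma$ and $0$ otherwise, so in particular all stalks are free over the respective $\mathcal{A}_{\Sigma,\tau}$. Hence $\mathbf{K}(\mathcal{A}_{[\sigma]})$ is simply the cellular complex $C^\bullet(\mathcal{K}\otimes_{\mathcal{A}_\Sigma}\mathcal{A}_{[\sigma]})$ with no resolution needed. Now for each cone $\tau$, the stalk of $\mathcal{K}_\rho\otimes_{\mathcal{A}_{\Sigma,\rho}}(\mathcal{A}_{[\sigma]})_\rho$ at $\tau$ is: it is nonzero only when $\tau\subset\rho$ (so that $(\mathcal{A}_{[\sigma]}^\vee)_\tau=\mathcal{A}_{\Sigma,\tau}^\vee$ lives) and $\rho\subset\sigma$ (so that $(\mathcal{A}_{[\sigma]})_\rho\neq 0$); and in that range it is $\mathcal{A}_{\Sigma,\tau}^\vee\otimes_{\mathcal{A}_{\Sigma,\rho}}\mathcal{A}_{\Sigma,\rho}$, together with the obvious identifications making the $\rho$ run over the interval $\tau\subset\rho\subset\sigma$ a constant $\Sigma$-diagram. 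By Lemma \ref{lemma-acycl-1}, this stalk-wise complex is acyclic unless $\tau=\sigma$, i.e. the whole complex $C^\bullet$ has cohomology concentrated at the cone $\sigma$; there the only surviving term is $\rho=\sigma$ in homological degree $-\dim(\sigma)$, contributing $\mathcal{A}_{\Sigma,\sigma}^\vee$. Thus $\mathbf{K}(\mathcal{A}_{[\sigma]})\simeq \mathcal{A}_{\{\sigma\}}^\vee[\dim(\sigma)]$.

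For part (2), one first needs a projective resolution of $\mathcal{A}_{\{\sigma\}}$ in $\mathcal{A}_\Sigma\text{-mod}$ by sums of $\mathcal{A}_{[\rho]}$'s (possible by Lemma \ref{lemma on proj res in a-mod}(2)); the natural choice is the ``cellular'' resolution whose $\rho$-th term (for $\rho\supset\sigma$, in degree $\dim(\sigma)-\dim(\rho)$) is $\mathcal{A}_{[\rho]}$, with differential given by the face maps and orientation signs — precisely the dual set-up to the cellular complex above, and exact off $\sigma$ by Lemma \ref{lemma-acycl-1} and exact at $\sigma$ by inspection. Applying the already-computed part (1) term by term, $\mathbf{K}$ sends this resolution to a complex whose $\rho$-th term is $\mathcal{A}_{\{\rho\}}^\vee[\dim(\rho)]$, shifted into the appropriate total degree; after totalizing one checks the result collapses to $\mathcal{A}_{[\sigma]}^\vee[\dim(\sigma)]$. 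Alternatively — and perhaps more cleanly — one argues directly from the double complex $C^\bullet(\mathcal{K}\otimes_{\mathcal{A}_\Sigma}\mathcal{P}^\bullet)$ and runs one of the two spectral sequences: computing $\mathcal{K}\otimes_{\mathcal{A}_\Sigma}\mathcal{P}^\bullet$ stalkwise first reconstructs $\mathcal{K}\otimes_{\mathcal{A}_\Sigma}\mathcal{A}_{\{\sigma\}}$ up to the fact that $\mathcal{A}_{\{\sigma\}}$ has nonfree stalks — so one has to be a little careful that $\mathcal{K}_\rho\otimes^{\mathbf{L}}_{\mathcal{A}_{\Sigma,\rho}}(\mathcal{A}_{\{\sigma\}})_\rho$ has no higher Tor; since $(\mathcal{A}_{\{\sigma\}})_\rho$ is $\mathcal{A}_{\Sigma,\sigma}$ as a module over $\mathcal{A}_{\Sigma,\rho}$ when $\rho=\sigma$ and $0$ otherwise, and $\mathcal{A}_{\Sigma,\sigma}$ is free over itself, this is immediate — leaving the single stalk $\mathcal{A}_{\Sigma,\sigma}^\vee$ at $\sigma$ in degree $-\dim(\sigma)$, which as a sheaf (via the $\mathcal{K}$-diagram structure, whose maps $\mathcal{A}_{[\sigma]}^\vee\to\mathcal{A}_{[\tau]}^\vee$ are the projections) is exactly $\mathcal{A}_{[\sigma]}^\vee$.

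The main obstacle I expect is bookkeeping rather than conceptual: keeping straight the two ``opposite'' cellular directions (the diagram $\mathcal{K}$ indexed by cones $\rho\supset\sigma$ producing the duals $\mathcal{A}_{[\rho]}^\vee$, versus the open-star indexing $\rho\subset\sigma$), getting the homological degree conventions and orientation signs consistent so that the single surviving term genuinely sits in degree $-\dim(\sigma)$, and — for part (2) — confirming that the co-sheaf structure maps on the surviving stalk $\mathcal{A}_{\Sigma,\sigma}^\vee$ assemble into $\mathcal{A}_{[\sigma]}^\vee$ and not some twist of it. None of these steps is deep; each reduces to the acyclicity of an augmented cellular chain complex of a ball (Lemma \ref{lemma-acycl-1}) plus a direct identification of the one nonzero term.
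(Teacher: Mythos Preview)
Your direct arguments are correct and coincide with the paper's proof: for (1) you compute the cellular complex stalkwise and invoke Lemma~\ref{lemma-acycl-1}, exactly as the paper does; for (2) your ``alternative'' is precisely the paper's argument --- since the only nonzero stalk of $\mathcal{A}_{\{\sigma\}}$ is $\mathcal{A}_{\Sigma,\sigma}$, which is free over itself, one may apply $\mathbf{K}$ directly without resolving, and the $\Sigma$-diagram $\mathcal{K}\otimes_{\mathcal{A}_\Sigma}\mathcal{A}_{\{\sigma\}}$ has the single nonzero entry $\mathcal{K}_\sigma=\mathcal{A}_{[\sigma]}^\vee$ in degree $-\dim(\sigma)$.

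One small slip in your first alternative for (2): the cellular projective resolution of $\mathcal{A}_{\{\sigma\}}$ is indexed by faces $\rho\subset\sigma$ (with $\mathcal{A}_{[\rho]}$ in degree $\dim(\rho)-\dim(\sigma)$), not by $\rho\supset\sigma$; the natural maps $\mathcal{A}_{[\tau]}\to\mathcal{A}_{[\rho]}$ exist for $\tau\subset\rho$, and one checks exactness stalkwise again via Lemma~\ref{lemma-acycl-1}. This route does work, but as you note it is less clean than the direct argument, and the paper does not take it.
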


\begin{proof} The first isomorphism follows from Lemma \ref{lemma-acycl-1}.

To see the second, notice that although
$\mathcal{A}_{\{\sigma \}}$ is not a projective object in $D^b(\mathcal{A}_\Sigma \text{-mod})$, still the functor ${\bf K}$ can be applied directly to $\mathcal{A}_{\{\sigma \}}$. (This is because the only nonzero stalk (at $\sigma$) of $\mathcal{A}_{\{\sigma \}}$ is a free $\mathcal{A}_{\Sigma ,\sigma}$-module.) Then the assertion (2) follows.
\end{proof}

Let us prove that the functor ${\bf K}$ is an equivalence of triangulated
categories.

The category $D^b(\mathcal{A}_{\Sigma }\text{-mod})$  is the triangulated envelope of either all objects of the form $\mathcal{A}_{[\sigma ]}(m)$ or all objects of the form $\mathcal{A}_{\{\sigma \}}(m)$ taken over all $\sigma \in \Sigma$ and $m\in M$. Similarly,
$D^b(\text{co-}\mathcal{A}_{\Sigma}\text{-mod})$
is the triangulated envelope of all objects of the form $\mathcal{A}_{[\sigma ]}^\vee(m)$ or all objects of the form $\mathcal{A}_{\{\sigma \}}^\vee(m)$.
So it suffices (using Lemma \ref{lemma half of koz d}) to show that for any $m,n,\tau ,\xi$ the functor ${\bf K}$  induces an isomorphism
\begin{equation} {\bf K}:{\bf R} Hom _{\mathcal{A}_{\Sigma }\text{-mod}}(\mathcal{A}_{[\tau]}(m),\mathcal{A}_{\{\xi\}}(n)) \to {\bf R}Hom _{\text{co-}\mathcal{A}_{\Sigma}\text{-fct}}(\mathcal{A}_{\{\tau\}}^\vee(m),
\mathcal{A}_{[\xi]}^\vee(n))
\end{equation}
But the objects $\mathcal{A}_{[\tau]}(m)$ and $\mathcal{A}_{[\xi]}^\vee(n)$ are projective and injective respectively. Hence we need to show that the map
\begin{equation}\label{koz fntr isom on hom} {\bf K}:Hom _{\mathcal{A}_{\Sigma }\text{-mod}}(\mathcal{A}_{[\tau]}(m),\mathcal{A}_{\{\xi\}}(n)) \to Hom _{\text{co-}\mathcal{A}_{\Sigma}\text{-fct}}(\mathcal{A}_{\{\tau\}}^\vee(m),
\mathcal{A}_{[\xi]}^\vee(n))
\end{equation}
is an isomorphism. Both spaces are equal to the $(n-m)$-graded part of $\mathcal{A}_{\tau}$ if
$\tau =\xi$ and vanish otherwise. This proves Theorem \ref{koz dual thm}.

The following property of the equivalence ${\bf K}$ will be used in Section \ref{kos dual and serre funct}. 

\begin{prop}\label{addition to comb in case of complete fan} Assume that the fan $\Sigma $ is complete. Then
$${\bf K}(\mathcal{A}_\Sigma)\simeq \mathcal{A}_\Sigma ^\vee [n]$$
\end{prop}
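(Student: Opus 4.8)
The plan is to compute $\mathbf{K}(\mathcal{A}_\Sigma)$ directly from the definition of the Koszul functor, using a projective resolution of $\mathcal{A}_\Sigma$ whose terms are the projective generators $\mathcal{A}_{[\sigma]}$. The starting point is the observation that for a \emph{complete} fan $\Sigma$ the structure sheaf $\mathcal{A}_\Sigma$ itself admits a resolution
\[
0\to \bigoplus_{\dim\sigma=n}\mathcal{A}_{[\sigma]}\to\cdots\to\bigoplus_{\dim\tau=1}\mathcal{A}_{[\tau]}\to\mathcal{A}_{[0]}=\mathcal{A}_\Sigma\to 0,
\]
i.e. the cellular complex $C^\bullet(\widetilde{\mathcal A})$ of the constant $\Sigma$-diagram $\widetilde{\mathcal A}=\{\mathcal{A}_{[\sigma]}\}_{\sigma\in\Sigma}$ with all transition maps the natural projections $\mathcal{A}_{[\sigma]}\to\mathcal{A}_{[\tau]}$ placed in degree $-\dim\sigma$. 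That this complex is a resolution of $\mathcal{A}_\Sigma$ is checked stalk by stalk: the stalk at a cone $\rho$ is the cellular complex of the constant diagram $M=\mathbb{C}$ supported on $\{\sigma:\rho\subset\sigma\}=[\rho]^{\mathrm{op}}$ inside $\Sigma$, which (since $\Sigma$ is complete) is the augmented cellular chain complex of the link/ball around $\rho$ and hence is acyclic except at $\rho$ itself, where it gives $\mathcal{A}_{\Sigma,\rho}$; this is exactly the kind of combinatorial-topology input already used in Lemma~\ref{lemma-acycl-1}.

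Next I would feed this projective resolution $\mathcal{P}^\bullet=C^\bullet(\widetilde{\mathcal A})\to\mathcal{A}_\Sigma$ into the construction of $\mathbf{K}$. Since each $\mathcal{P}^{-j}=\bigoplus_{\dim\sigma=j}\mathcal{A}_{[\sigma]}$ and, by Lemma~\ref{lemma half of koz d}(1), $\mathbf{K}(\mathcal{A}_{[\sigma]})\simeq\mathcal{A}_{\{\sigma\}}^\vee[\dim\sigma]$, applying $\mathbf{K}$ termwise yields a complex of co-sheaves whose $(-j)$-term, after accounting for the shift, contributes $\bigoplus_{\dim\sigma=j}\mathcal{A}_{\{\sigma\}}^\vee$. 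The upshot is that $\mathbf{K}(\mathcal{A}_\Sigma)$ is represented by the totalization of a double complex built from the $\mathcal{A}_{\{\sigma\}}^\vee$; one of the two differentials is the cellular differential of $\widetilde{\mathcal A}$ dualized, and because the objects $\mathcal{A}_{\{\sigma\}}^\vee$ are concentrated at distinct points $\sigma$, that differential vanishes. Hence the totalization collapses and $\mathbf{K}(\mathcal{A}_\Sigma)$ is, as a graded object, $\bigoplus_\sigma \mathcal{A}_{\{\sigma\}}^\vee$ sitting in a single cohomological degree; tracking the shifts $[\dim\sigma]$ against the homological degree $-\dim\sigma$ of $\mathcal{P}^{-\dim\sigma}$ shows everything lands in degree $-n$, i.e. the answer is $\big(\bigoplus_\sigma\mathcal{A}_{\{\sigma\}}^\vee\big)[n]$. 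Finally, $\bigoplus_\sigma\mathcal{A}_{\{\sigma\}}^\vee$ is canonically the co-sheaf $\mathcal{A}_\Sigma^\vee$, because $\mathcal{A}_\Sigma^\vee=\{\mathcal{A}_{\Sigma,\sigma}^\vee\}$ with the dual-of-restriction comaps, and one verifies these comaps are exactly those induced after passing to cohomology of the collapsed double complex.

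An alternative, cleaner route avoids choosing the resolution explicitly: since $\mathbf{K}$ is a triangulated equivalence and $\mathcal{A}_\Sigma=\mathcal{A}_{[0]}$, the exact triangles coming from the stratification short exact sequences $0\to F_U\to F\to F_Z\to 0$ let one express $[\mathcal{A}_{[0]}]$ in the Grothendieck group (and even build it by iterated cones) out of the $[\mathcal{A}_{\{\sigma\}}]$; applying Lemma~\ref{lemma half of koz d}(2) to each piece and checking the signs/shifts combinatorially gives the same identity. I would actually present the resolution argument, as it is more transparent and directly exhibits the isomorphism, but mention this second viewpoint as a sanity check on the degree bookkeeping.

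\textbf{Main obstacle.} The genuinely non-formal point is the first step: proving that the cellular complex $C^\bullet(\widetilde{\mathcal A})$ is a resolution of $\mathcal{A}_\Sigma$, which rests on the acyclicity of the augmented cellular chain complex attached to the star of each cone in a \emph{complete} fan (this is where completeness of $\Sigma$ is used — it is exactly the hypothesis of the proposition, and fails for general $\Sigma$). Everything after that — applying $\mathbf{K}$ termwise, seeing the dualized cellular differential vanish because the $\mathcal{A}_{\{\sigma\}}^\vee$ live at distinct points, and matching the residual comaps with those of $\mathcal{A}_\Sigma^\vee$ — is bookkeeping, the only subtlety being to keep the homological degree $-\dim\sigma$ and the internal shift $[\dim\sigma]$ from Lemma~\ref{lemma half of koz d} straight so that the final shift is uniformly $[n]$.
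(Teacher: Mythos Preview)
Your resolution does not exist. For $\tau\subsetneq\sigma$ there is \emph{no} nonzero morphism $\mathcal{A}_{[\sigma]}\to\mathcal{A}_{[\tau]}$: by Lemma~\ref{lemma on proj res in a-mod}(1) we have $\mathrm{Hom}(\mathcal{A}_{[\sigma]},\mathcal{A}_{[\tau]})=(\mathcal{A}_{[\tau]})_\sigma^{0}$, and since $\sigma\notin[\tau]$ this stalk is zero. So the ``natural projections'' you want as differentials are simply not sheaf maps, and the cellular complex $C^\bullet(\widetilde{\mathcal{A}})$ in $\mathcal{A}_\Sigma\text{-mod}$ is not defined. Relatedly, $\mathcal{A}_{[0]}\neq\mathcal{A}_\Sigma$: the open set $[0]$ is the single generic point $\{0\}$, so $\mathcal{A}_{[0]}$ is the skyscraper at $0$ with stalk $A(T)$, not the structure sheaf. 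What \emph{does} exist are the inclusions $\mathcal{A}_{[\tau]}\hookrightarrow\mathcal{A}_{[\sigma]}$ for $\tau\subset\sigma$, and one can build a \v{C}ech-type projective resolution of $\mathcal{A}_\Sigma$ from those (with $\mathcal{A}_\Sigma$ appearing as a quotient of $\bigoplus_{\dim\sigma=n}\mathcal{A}_{[\sigma]}$, not as $\mathcal{A}_{[0]}$); your stalkwise ball argument then applies to that complex. But this reversal changes all the bookkeeping in your later steps, and the identification of the total cohomology with $\mathcal{A}_\Sigma^\vee$ (rather than just its associated graded $\bigoplus_\sigma\mathcal{A}_{\{\sigma\}}^\vee$) still needs a genuine argument.

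The paper avoids all of this. Since every stalk $\mathcal{A}_{\Sigma,\sigma}$ is free over itself, one may apply $\mathbf{K}$ directly to $\mathcal{A}_\Sigma$ without resolving (the same remark as in Lemma~\ref{lemma half of koz d}(2)). This gives immediately the complex
\[
\mathbf{K}(\mathcal{A}_\Sigma):\quad \bigoplus_{\dim\sigma=n}\mathcal{A}_{[\sigma]}^\vee\xrightarrow{d^{-n}}\bigoplus_{\dim\tau=n-1}\mathcal{A}_{[\tau]}^\vee\to\cdots
\]
in $\text{co-}\mathcal{A}_\Sigma\text{-tcf}$. One then writes down the natural map $\epsilon:\mathcal{A}_\Sigma^\vee\to\bigoplus_{\dim\sigma=n}\mathcal{A}_{[\sigma]}^\vee$ (injective because $\Sigma$ is complete) and checks that the augmented complex is acyclic stalkwise: at each $\tau$ it is the shifted augmented homology of a ball. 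This is exactly the same topological input you invoke, but applied once, on the cosheaf side, with no double complex and no extension problem.
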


\begin{proof} As explained in the proof of part (2) in Lemma 
\ref{lemma half of koz d}, we don't need to take a projective resolution of the sheaf $\mathcal{A}_\Sigma$ to apply the functor ${\bf K}$. Therefore 
\begin{equation}\label{analyzing case on str sheaf new}
{\bf K}(\mathcal{A}_\Sigma):\ \bigoplus _{\dim (\sigma)=n}\mathcal{A}_{[\sigma]}^\vee \stackrel{d^{-n}}{\longrightarrow} \bigoplus _{\dim (\tau)=n-1}\mathcal{A}_{[\tau]}^\vee \to \cdots
\end{equation}
where the summand $\mathcal{A}_{[\sigma]}^\vee$ appears in cohomological degree $-\dim (\sigma)$. We may assume that the orientations of the $n$-dimensional cones in $\Sigma$ are induced by a global orientation of the space $N_{\bbR}$. Then the image of the natural morphism 
$$\epsilon :\mathcal{A}_\Sigma ^\vee \to \bigoplus _{\dim (\sigma)=n}\mathcal{A}_{[\sigma]}^\vee$$
is contained in the kernel of $d^{-n}$. (This morphism is injective, because the fan $\Sigma$ is complete). So we obtain the augmented complex
$$\tilde{{\bf K}}(\mathcal{A}_\Sigma):\ \mathcal{A}_\Sigma ^\vee \stackrel{\epsilon}{\longrightarrow} \bigoplus _{\dim (\sigma)=n}\mathcal{A}_{[\sigma]}^\vee \stackrel{d^{-n}}{\longrightarrow} \bigoplus _{\dim (\tau)=n-1}\mathcal{A}_{[\tau]}^\vee \to \cdots$$
We claim that this complex is acyclic. Indeed, choose a cone $\tau \in \Sigma$. Then because the fan $\Sigma$ is complete, the stalk 
$\tilde{{\bf K}}(\mathcal{A}_\Sigma)_\tau$ of this complex computes the (shifted) augmented homology of a ball (with $\mathcal{A}^\vee_{\Sigma ,\tau}$ as the group of coefficients). Hence it is acyclic.

\end{proof}

\section{Equivariant sheaves on toric varieties}\label{equv sheaves on toric var}

\subsection{Categories $\mathcal{O}_{X,T}\text{-Mod}$, $Qcoh_{X,T}$, and $coh_{X,T}$}

Consider the usual diagram
$$\begin{matrix}T\times X\\
m\downarrow \ \  \downarrow p\\
X
\end{matrix}
$$
where $p$ is the projection and $m$ is the action morphism.

Recall that a $T$-equivariant sheaf of $\mathcal{O}_X$-modules, is a pair $(\mathcal{F},\theta)$, where $\mathcal{F}\in \mathcal{O}_{X}\text{-Mod}$ and $\theta :m^*\mathcal{F}\stackrel{\sim}{\to} p^*\mathcal{F}$ is an isomorphism in $\mathcal{O}_{T\times X}\text{-Mod}$ that satisfies the usual cocycle condition. We denote the category of such sheaves by
$\mathcal{O}_{X,T}\text{-Mod}$. It is an abelian category. By abuse of notation we will denote the equivariant sheaf $(\mathcal{F},\theta)$ simply by $\mathcal{F}$.

The abelian category $\mathcal{O}_{X,T}\text{-Mod}$ contains the full abelian subcategory of quasi-coherent (resp. coherent) sheaves $Qcoh _{X,T}$ (resp. $coh_{X,T}$). We will also be interested in a natural enlargement $\mathcal{O}_{X,T}\text{-mod}\subset \mathcal{O}_{X,T}\text{-Mod}$ of the category $coh_{X,T}$ (Definition
\ref{defn of good subcat}).

\subsection{} \label{basics on restr of sheaves} For $\mathcal{F}\in \mathcal{O}_{X,T}\text{-Mod}$ the torus $T$ acts on the space of global sections $\Gamma (X,\mathcal{F})$. Moreover, for the inclusion $i:Z\hookrightarrow X$ of any $T$-invariant locally closed subset $Z\subset X$, the space of sections
$\Gamma (Z,\mathcal{F})$ is a $T$-module. Denote by $\mathcal{F} _Z:=i_!i^{-1}\mathcal{F}$ the extension by zero to $X$ of the sheaf  $i^{-1}\mathcal{F}$ on $Z$. More precisely, the sections of $\mathcal{F}_Z$ over an open set $W\subset X$ are the sections of $i^{-1}\mathcal{F}$ over $W\cap Z$ whose support is closed in $W$ \cite{Godement}, Ch 2, Thm.2.9.2. The stalks of $\mathcal{F}_Z$ are:
$$(\mathcal{F}_Z)_x=\begin{cases} \text{$\mathcal{F}_x,$ if $x\in Z$}\\
\text{$\mathcal{F}_x=0$, if $x\notin Z$}
\end{cases}
$$
Then $\mathcal{F}_Z\in \mathcal{O}_{X,T}\text{-Mod}$. Indeed, the locally closed embedding $i:Z\hookrightarrow X$ gives a locally closed embedding $j:T\times Z\hookrightarrow T\times X$ such that the two diagrams
$$\begin{array}{ccccccccccc}
T \times Z & \stackrel{j}{\to} & T\times X & & & & & T \times Z & \stackrel{j}{\to} & T\times X\\
p\downarrow & & p\downarrow & & & & & m\downarrow & & m\downarrow \\
Z & \stackrel{i}{\to} &  X & & & & & Z & \stackrel{i}{\to} &  X
\end{array}
$$
are fiber squares. And an isomorphism of $\mathcal{O}_{T\times X}$-modules
$\theta :m^*\mathcal{F}\stackrel{\sim}{\to} p^*\mathcal{F}$ restricts to the isomorphism of $\mathcal{O}_{T\times X}$-modules
$$m^*\mathcal{F_Z}=(m^*\mathcal{F})_{T\times Z}\stackrel{\theta _Z}{\to} (p^*\mathcal{F})_{T\times Z}=p^*\mathcal{F}_Z$$
So the operation $\mathcal{F}\mapsto \mathcal{F}_Z$ is an exact endofunctor of the category $\mathcal{O}_{X,T}\text{-Mod}$.

Choose an orbit ${\bf o}\subset X$ and let $V:=st({\bf o})\backslash {\bf o}$. Then for any $\mathcal{F}\in \mathcal{O}_{X,T}\text{-Mod}$ we have the exact sequence of sheaves in $\mathcal{O}_{X,T}\text{-Mod}$
$$0\to \mathcal{F}_V\to \mathcal{F}_{st({\bf o})}\to \mathcal{F}_{{\bf o}}\to 0$$
\noindent{\bf Notation.} In case $\mathcal{F}=\mathcal{O}_X$, we will write $\mathcal{O}_{X\vert {\bf o}}$ for $\mathcal{F}_{{\bf o}}$ to distinguish it from the structure sheaf of the orbit ${\bf o}$.




\subsection{}\label{sect on def of twisting} The group of characters $M$ acts by auto-equivalences of the category $\mathcal{O}_{X,T}\text{-Mod}$, $\mathcal{N}\mapsto \mathcal{N}(m)$
for $m\in M$. Namely, for any $T$-invariant locally closed subset $Z\subset X$ the $T$-action on $\Gamma (Z, \mathcal{F}(m))$ is the $T$-action on $\Gamma (Z, \mathcal{F})$ twisted by the character $m$.

Let $j:U\hookrightarrow X$ be the embedding of a $T$-invariant open subset. Then the restriction $j^*=j^{-1}$ is the functor from $\mathcal{O}_{X,T}\text{-Mod}$ to $\mathcal{O}_{U,T}\text{-Mod}$. Assume in addition that $U$ is affine with the algebra of functions $A(U)$. The $T$-action on $A$ makes it $M$-graded. Moreover the functor of global sections $\Gamma (U,(-))$ defines an equivalence between the category $Qcoh_{U,T}$ (resp. $coh_{U,T}$) and the category of (resp. finitely generated) graded $A(U)$-modules:
$$Qcoh_{U,T}\to A(U)\text{-Mod},\quad coh_{U,T}\to A(U)\text{-mod}$$
In particular, for any $\mathcal{F}\in Qcoh_{U,T}$ the $T$-module $\Gamma (U,\mathcal{F})$ is locally finite.

\subsection{}

\begin{defn} \label{defn of good subcat} Define the full subcategory $\mathcal{O}_{X,T}\text{-mod} \subset \mathcal{O}_{X,T}\text{-Mod}$ to consist of objects $\mathcal{F}$ which are quotients (in $\mathcal{O}_{X,T}\text{-Mod}$) of finite sums of
sheaves $\bigoplus  \mathcal{O}_{st({\bf o})}(m)$,
where ${\bf o}\subset X$ is an orbit and $m\in M$.
\end{defn}

\begin{remark}  Note that the category $coh _{X,T}$ is contained in
$\mathcal{O}_{X,T}\text{-mod}$. We will prove that the category $\mathcal{O}_{X,T}\text{-mod}$ is abelian and the induced functor
\begin{equation} D^b(coh_{X,T})\to D^b(\mathcal{O}_{X,T}\text{-mod})
\end{equation}
is full and faithful (Proposition \ref{prop on der eq}).
\end{remark}





\subsection{The category $(\mathcal{O}_{X,T}\text{-}mod)_{\bf o}$}

Fix an orbit ${\bf o}$ in $X$, and put $A=A(st({\bf o}))$.

\begin{defn} Let $(\mathcal{O}_{X,T}\text{-mod}) _{\bf o}$ be the full subcategory of $\mathcal{O}_{X,T}\text{-Mod}$ consisting of objects $\mathcal{F}$ which are quotients of finite direct sums $\bigoplus \mathcal{O}_{X\vert {\bf o}}(m)$with $m\in M$.
\end{defn}

In particular, the support of every nonzero object in $(\mathcal{O}_{X,T}\text{-mod}) _{\bf o}$ is equal to ${\bf o}$.

\begin{lemma} The correspondence $\mathcal{F}\mapsto \mathcal{F}_{{\bf o}}$ defines an exact functor from $\mathcal{O}_{X,T}\text{-mod}$ to $(\mathcal{O}_{X,T}\text{-mod}) _{\bf o}$.
\end{lemma}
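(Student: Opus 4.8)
The statement asserts two things: that $\mathcal{F}\mapsto\mathcal{F}_{\bf o}$ lands in $(\mathcal{O}_{X,T}\text{-mod})_{\bf o}$, and that it is exact. Exactness is already available, since $\mathcal{F}\mapsto\mathcal{F}_Z$ (extension by zero from a $T$-invariant locally closed $Z$) was shown in Section \ref{basics on restr of sheaves} to be an exact endofunctor of $\mathcal{O}_{X,T}\text{-Mod}$; restricting this endofunctor along the inclusion $\mathcal{O}_{X,T}\text{-mod}\subset\mathcal{O}_{X,T}\text{-Mod}$ preserves exactness, so the only real content is that the image lands in the prescribed subcategory. So the plan is: first reduce to checking the claim on the generators of $\mathcal{O}_{X,T}\text{-mod}$; then verify it on those generators; then propagate through quotients.

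First I would take $\mathcal{F}\in\mathcal{O}_{X,T}\text{-mod}$ and, by Definition \ref{defn of good subcat}, choose a surjection $\bigoplus_{i}\mathcal{O}_{st({\bf o}_i)}(m_i)\twoheadrightarrow\mathcal{F}$ in $\mathcal{O}_{X,T}\text{-Mod}$ with finitely many summands. Applying the exact functor $(-)_{\bf o}$ gives a surjection $\bigoplus_i\big(\mathcal{O}_{st({\bf o}_i)}(m_i)\big)_{\bf o}\twoheadrightarrow\mathcal{F}_{\bf o}$. So it suffices to show each $\big(\mathcal{O}_{st({\bf o}')}(m)\big)_{\bf o}$ is a quotient of a finite sum of copies of $\mathcal{O}_{X\vert{\bf o}}(m')$, i.e. lies in $(\mathcal{O}_{X,T}\text{-mod})_{\bf o}$ — and then $\mathcal{F}_{\bf o}$, being a quotient of a finite sum of such objects, will itself lie in $(\mathcal{O}_{X,T}\text{-mod})_{\bf o}$ (a quotient of a quotient of a finite sum is a quotient of a finite sum). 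Note twisting by $m$ commutes with $(-)_{\bf o}$, so we may ignore the twist and analyze $\big(\mathcal{O}_{st({\bf o}')}\big)_{\bf o}$.

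For the generator computation, observe that $\big(\mathcal{O}_{st({\bf o}')}\big)_{\bf o}=\big(\mathcal{O}_X\big)_{st({\bf o}')\cap{\bf o}}$ by the stalk description of extension by zero in Section \ref{basics on restr of sheaves} (the support is the intersection of supports). Now ${\bf o}\subset st({\bf o}')$ precisely when ${\bf o}'$ is a face-orbit of ${\bf o}$, i.e. $st({\bf o})\subset st({\bf o}')$; otherwise the intersection ${\bf o}\cap st({\bf o}')$ is empty and $\big(\mathcal{O}_{st({\bf o}')}\big)_{\bf o}=0\in(\mathcal{O}_{X,T}\text{-mod})_{\bf o}$ trivially. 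In the nontrivial case ${\bf o}\subset st({\bf o}')$ one has ${\bf o}\cap st({\bf o}')={\bf o}$, so $\big(\mathcal{O}_{st({\bf o}')}\big)_{\bf o}=\big(\mathcal{O}_X\big)_{\bf o}=\mathcal{O}_{X\vert{\bf o}}$, which is the generator of $(\mathcal{O}_{X,T}\text{-mod})_{\bf o}$ itself and hence certainly lies in that subcategory. This finishes the generator check; combined with the reduction above, $\mathcal{F}_{\bf o}\in(\mathcal{O}_{X,T}\text{-mod})_{\bf o}$, and exactness is inherited from the exactness of $(-)_{\bf o}$ already established.

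The only mildly delicate point — the part I would be most careful about — is the bookkeeping with supports and the identification ${\bf o}\cap st({\bf o}')$, i.e. making sure that "extension by zero, then restrict support to ${\bf o}$" really equals "extension by zero along ${\bf o}\cap st({\bf o}')$"; this is the transitivity $i_!i^{-1}\circ j_!j^{-1}=(i\cap j)_!(i\cap j)^{-1}$ for locally closed embeddings, together with the combinatorial fact (from Section \ref{term and notat}) that $st({\bf o})\subset st({\bf o}')\iff {\bf o}'\preceq{\bf o}$ and that in that case ${\bf o}\subseteq st({\bf o}')$. Everything else is formal. I would also remark explicitly that finiteness is preserved because only finitely many summands occur and each contributes either $0$ or a single copy of $\mathcal{O}_{X\vert{\bf o}}$ (up to twist), so no infinite sums are introduced.
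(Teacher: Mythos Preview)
Your proof is correct and follows essentially the same approach as the paper's: choose a surjection from a finite sum $\bigoplus \mathcal{O}_{st({\bf o}')}(m)$, apply the exact functor $(-)_{\bf o}$, and observe that each $(\mathcal{O}_{st({\bf o}')}(m))_{\bf o}$ is either $0$ or $\mathcal{O}_{X\vert{\bf o}}(m)$ depending on whether ${\bf o}\subset st({\bf o}')$. Your version is slightly more thorough in that you explicitly handle the empty-intersection case and address exactness (which the paper leaves implicit), but the argument is the same.
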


\begin{proof}
Let $\mathcal{F}\in \mathcal{O}_{X,T}\text{-mod}$ and
choose a surjection from a finite sum $f:\bigoplus \mathcal{O}_{st({\bf o}')}(m)\to \mathcal{F}$. Then for any orbit ${\bf o}$, we obtain the surjection
$$f_{\bf o}:\bigoplus (\mathcal{O}_{st({\bf o}')}(m))_{\bf o}\to \mathcal{F}_{\bf o}$$
It remains to notice that for any two orbits ${\bf o}_1\subset \overline{{\bf o}}_2$  we have
$$(\mathcal{O}_{st({\bf o}_1)}(m))_{{\bf o}_2}=\mathcal{O}_{X\vert {\bf o}_2}(m)$$
\end{proof}

\begin{prop} \label{main prop one orbit} Consider a short exact sequence in $\mathcal{O}_{X,T}\text{-Mod}$:\begin{equation}\label{short ex seq of sheaves} 0\to \mathcal{K}\to \mathcal{P}\to \mathcal{M}\to 0
\end{equation}
where $\mathcal{P}:=\bigoplus _{i}\mathcal{O}_{X\vert {\bf o}}(m_i)$. (In particular all three sheaves are supported on ${\bf o}$ and  $\mathcal{P}, \mathcal{M}\in  (\mathcal{O}_{X,T}\text{-mod} )_{\bf o}$). Let
\begin{equation}\label{half exact seq}
0\to \Gamma ({\bf o},\mathcal{K})\to \Gamma ({\bf o},\mathcal{P})\to \Gamma ({\bf o},\mathcal{M})
\end{equation}
be the corresponding exact sequence of sections (it is the sequence of $\Gamma =\Gamma ({\bf o},\mathcal{O}_{X\vert {\bf o}})$-modules with a $T$-action). Then the following holds.

(1) $\Gamma ({\bf o},\mathcal{P})^{lf}=\oplus _iA(m_i)$.

(2) $\Gamma ({\bf o},\mathcal{K})^{lf}=\Gamma ({\bf o},\mathcal{K})\cap \Gamma ({\bf o},\mathcal{P})^{lf}$ is a finitely generated graded $A$-module.

(3) The sheaves $\mathcal{K}, \mathcal{P}, \mathcal{M}$ are generated as $\mathcal{O}_{X\vert {\bf o}}$-modules by their global sections. They are even generated by the subspaces $\Gamma ({\bf o},-)^{lf}\subset \Gamma ({\bf o},-)$ of $T$-finite elements.

(4) The sequence
\begin{equation}\label{full exact seq} 0\to \Gamma ({\bf o},\mathcal{K})\to \Gamma ({\bf o},\mathcal{P})\to \Gamma ({\bf o},\mathcal{M})\to 0
\end{equation} is exact.

(5) The sequence
\begin{equation}\label{full exact seq of loc fin} 0\to \Gamma ({\bf o},\mathcal{K})^{lf}\to \Gamma ({\bf o},\mathcal{P})^{lf}\to \Gamma ({\bf o},\mathcal{M})^{lf}\to 0
\end{equation} is an exact sequence of finitely generated graded $A$-modules.

(6) The sequence \eqref{full exact seq} is the localization $(-)\otimes _A\Gamma$ applied to the sequence \eqref{full exact seq of loc fin}.
\end{prop}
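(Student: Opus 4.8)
The plan is to analyze the exact sequence \eqref{short ex seq of sheaves} by passing to global sections over ${\bf o}$ and systematically disentangling the $T$-locally finite part from the full module. Since everything is supported on the single orbit ${\bf o}$ with coordinate ring $\Gamma = \Gamma({\bf o}, \mathcal{O}_{X\vert{\bf o}}) = A[S^{-1}]$, the category $(\mathcal{O}_{X,T}\text{-mod})_{\bf o}$ is controlled by $\Gamma$-modules with compatible $T$-action. First I would establish (1): by definition $\mathcal{P} = \bigoplus_i \mathcal{O}_{X\vert{\bf o}}(m_i)$ so $\Gamma({\bf o},\mathcal{P}) = \bigoplus_i \Gamma(m_i)$, and by Lemma \ref{equal of loc fin elts}(a) we have $\Gamma^{lf} = A$, hence $\Gamma({\bf o},\mathcal{P})^{lf} = \bigoplus_i A(m_i)$ (the twist by $m_i$ just shifts the grading, and taking $T$-finite vectors commutes with finite direct sums).

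Next I would prove (2), the key finiteness statement. Set $K^{lf} := \Gamma({\bf o},\mathcal{K})^{lf}$. Since $\mathcal{K} \hookrightarrow \mathcal{P}$ is injective, $\Gamma({\bf o},\mathcal{K}) = \Gamma({\bf o},\mathcal{K}) \cap \Gamma({\bf o},\mathcal{P})$ inside $\Gamma({\bf o},\mathcal{P})$, and intersecting with the $T$-finite vectors gives $K^{lf} = \Gamma({\bf o},\mathcal{K}) \cap \bigoplus_i A(m_i)$, which is a graded $A$-submodule of the finitely generated (hence Noetherian, since $A$ is a polynomial-times-Laurent ring, in particular Noetherian) module $\bigoplus_i A(m_i)$; therefore $K^{lf}$ is a finitely generated graded $A$-module. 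Then for (3): the sheaf $\mathcal{O}_{X\vert{\bf o}}$ is generated by its global sections (stated in the preliminaries), hence so is any quotient of $\bigoplus \mathcal{O}_{X\vert{\bf o}}(m)$, covering $\mathcal{P}$ and $\mathcal{M}$; for $\mathcal{K}$ one argues that $\mathcal{K} = \mathcal{P}_V$-type kernels are again of this form, or more directly: since $\mathcal{K}_x = \ker(\mathcal{P}_x \to \mathcal{M}_x)$ for $x \in {\bf o}$ and $\mathcal{P}$ is a free $\mathcal{O}_{X\vert{\bf o}}$-module while the sheaves are all $T$-equivariant with the same stalk at every point of the orbit ${\bf o}$ (by transitivity of the action), the stalk at any $x$ is recovered from the global $T$-finite sections via the localization $S_x$. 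The strengthening to generation by $\Gamma({\bf o},-)^{lf}$ follows because the $T$-finite sections already span every stalk after localizing at $S_x \supset S^{lf}$: concretely, $\hat{A} \otimes_A K^{lf} = \widehat{\Gamma({\bf o},\mathcal{K})}$ using Lemma \ref{lemma kernel of compl} and flatness of $\hat{A}$, and the completion sees all stalks on ${\bf o}$.

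For (4), (5), (6) I would proceed in the reverse order from how they are listed, since (6) is really the structural heart and (4), (5) follow. Define the graded $A$-module map $K^{lf} \to \Gamma({\bf o},\mathcal{P})^{lf} = \bigoplus_i A(m_i) \to \Gamma({\bf o},\mathcal{M})^{lf}$ and let $\overline{M}$ denote its cokernel; because localization $(-)\otimes_A \Gamma$ is exact and by (1)--(2) applied after localizing (using that $\mathcal{K} \hookrightarrow \mathcal{P}$ localizes to the full-section sequence and $\mathcal{M}$ has no higher $S$-torsion since it is supported on ${\bf o}$), the sequence $0 \to K^{lf}\otimes_A\Gamma \to \Gamma({\bf o},\mathcal{P}) \to \overline{M}\otimes_A\Gamma \to 0$ is exact; comparing with \eqref{half exact seq} and using that $\Gamma({\bf o},\mathcal{M})$ is the image, one identifies $\overline{M}\otimes_A\Gamma = \Gamma({\bf o},\mathcal{M})$, giving (6) and exactness of \eqref{full exact seq}, i.e. (4). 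Then (5) follows by taking $(-)^{lf}$ of \eqref{full exact seq}: the left-exactness of $(-)^{lf}$ is automatic, surjectivity onto $\Gamma({\bf o},\mathcal{M})^{lf}$ follows because $\Gamma({\bf o},\mathcal{M})^{lf} = \overline{M}$ by Lemma \ref{equal of loc fin elts}(a) (applied to the finitely generated graded module $\overline{M}$), and then finite generation of all three terms follows from (2) together with (1). The main obstacle I anticipate is the careful bookkeeping in (6): one must verify that no spurious $S$-torsion appears when passing from \eqref{full exact seq of loc fin} to \eqref{full exact seq}, which hinges on the fact — already exploited in the proof of Lemma \ref{lemma kernel of loc} — that every element of $S$ is nonvanishing on all of ${\bf o}$, so that $\mathcal{M}$, being supported exactly on ${\bf o}$, is $S$-torsion-free; once that is in hand, exactness of localization does all the remaining work.
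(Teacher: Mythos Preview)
Your arguments for (1) and (2) match the paper's and are fine. The gaps appear in (3) for $\mathcal{K}$ and in your reverse-order strategy for (4)--(6).

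\textbf{The gap in (3).} For $\mathcal{K}$ your reasoning is circular: saying ``the stalk at any $x$ is recovered from the global $T$-finite sections via the localization $S_x$'' is exactly the statement to be proved. The completion sentence does not help either: Lemma~\ref{lemma kernel of compl} only gives injectivity of $N\to\hat N$ for finitely generated graded $N$, and you have no a priori reason that $\hat A\otimes_A K^{lf}$ agrees with the completion of $\Gamma({\bf o},\mathcal{K})$ (the latter is not known to be finitely generated over $A$; it is only a $\Gamma$-module). The paper's proof of (3) for $\mathcal{K}$ is the most delicate step of the whole proposition: it picks a germ $\xi_x\in\mathcal{K}_x$, clears denominators to get an element ${\bf a}\in\bigoplus A(m_i)=\Gamma({\bf o},\mathcal{P})^{lf}$ with ${\bf a}_x=\xi_x$, then shows ${\bf a}\in\Gamma({\bf o},\mathcal{K})$ by a $T$-averaging trick---decomposing ${\bf a}=\sum_m{\bf a}^m$ into $T$-eigenvectors, translating by finitely many $t_j\in T$ to isolate each ${\bf a}^m$ as a local section of $\mathcal{K}$, then (after twisting to make ${\bf a}^m$ invariant) using $T$-equivariance to glue the translates into a global section. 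This is a genuine idea, not bookkeeping, and your proposal does not supply it.

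\textbf{The circularity in (4)--(6).} Your plan is to localize $0\to K^{lf}\to\bigoplus A(m_i)\to\overline M\to 0$ and ``compare'' with \eqref{half exact seq}. But the comparison map on the left, $K^{lf}\otimes_A\Gamma\to\Gamma({\bf o},\mathcal{K})$, is only known to be injective; its surjectivity is part of (6), which you are trying to prove. Likewise, identifying $\overline M\otimes_A\Gamma$ with $\Gamma({\bf o},\mathcal{M})$ requires knowing that $\phi:\Gamma({\bf o},\mathcal{P})\to\Gamma({\bf o},\mathcal{M})$ is surjective, which is (4). The paper breaks the circle by proving (4) directly: it shows $C:=\mathrm{coker}(\phi)$ vanishes by a support argument (localizing at any $x\in{\bf o}$ kills $C$ since $\phi_x$ is surjective on stalks, while $C$ being a $\Gamma=A[S^{-1}]$-module forces its support to meet ${\bf o}$ if nonempty). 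Once (4) is in hand, (5) follows by a diagram chase showing $L:=P^{lf}/K^{lf}\to\Gamma({\bf o},\mathcal{M})^{lf}$ is an isomorphism via $L\otimes_A\Gamma\simeq\Gamma({\bf o},\mathcal{M})$ and Lemma~\ref{equal of loc fin elts}(a); then (6) is the five-lemma. Your order cannot be made to work without an independent argument for either (4) or for $K^{lf}\otimes_A\Gamma=\Gamma({\bf o},\mathcal{K})$.
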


\begin{proof} (1) Since $\Gamma ({\bf o},\mathcal{O}_{X\vert {\bf o}})=\Gamma$, the claim follows from part (a) of Lemma \ref{equal of loc fin elts}.

(2) Obvious from (1) and the fact that $A$ is noetherian.

(3) Since the sheaf $\mathcal{O}_{X\vert {\bf o}}$ is generated by the subspace $A\subset \Gamma$, it follows that $\mathcal{P}$ is generated by the subspace $\Gamma ({\bf o},\mathcal{P})^{lf}$. Hence the same is true for the sheaf $\mathcal{M}$. We only have to prove that $\mathcal{K}$ is generated by the space $\Gamma ({\bf o},\mathcal{K})^{lf}$.

Choose a point $x\in {\bf o}$ and a germ $\xi _x\in \mathcal{K}_{x}\subset \mathcal{P}_{x}$.
By part (1) it has the form $\xi _x=(a_i/s_i)$, where $a_i\in A(m_i)$ and $s_i\in S_x$ (Section \ref{some lemmas}). Multiplying $\xi _x$ by the product of $s_i$'s we may assume that $\xi _x={\bf a}_x$, where ${\bf a}=(a_i)\in \bigoplus A(m_i)= \Gamma ({\bf o},\mathcal{P})^{lf}$. It suffices to prove that ${\bf a}\in \Gamma ({\bf o},\mathcal{K})$.

We may find an open neighbourhood $x\in U\subset {\bf o}$, such that $\xi _x$ comes from a section $\xi _U\in \Gamma (U,\mathcal{K})$ and $\xi _U={\bf a}_U$. Write
\begin{equation} \label{decomp into eigenvectors}
{\bf a}=\sum_{m\in M}{\bf a}^m
\end{equation}
where ${\bf a}^m$ in the $T$-eivenvector with the character $m$, i.e. $t_*{\bf a}^m=t^m{\bf a}^m$ for any $t\in T$. For a finite collection $\{t_j\}\subset T$consider the corresponding translates $\{t_{j*}{\bf a}_U\in \Gamma (t_{j*}U,\mathcal{K})\}$ ($\mathcal{K}$ is a $T$-invariant subsheaf of $\mathcal{P}$), and let
$$V:=\cap _jt_{j*}U.$$
For a large enough collection $\{t_j\}\subset T$ the $\bbC$-span of the translates $t_{j*}{\bf a}$ will contain each summand ${\bf a}^m$ in \eqref{decomp into eigenvectors}. So for each $m\in M$ we have
${\bf a}^m_V\in \Gamma (V,\mathcal{K})$ and may assume that ${\bf a}={\bf a}^m$ for some $m\in M$.

Replacing the pair $\mathcal{K}\subset \mathcal{P}$ by the pair
$\mathcal{K}(-m)\subset \mathcal{P}(-m)$ we may assume that ${\bf a}={\bf a}^0$ is a $T$-invariant global section of $\mathcal{P}$.
It follows that the various translates $\{t_*({\bf a}_V)\}_{t\in T}$ glue together to give a global section of $\mathcal{K}$. Hence
also ${\bf a}\in \Gamma ({\bf o},\mathcal{K})$. This finishes the proof of (3).

(4) We need to prove that the map of $\Gamma $-modules
$$\phi :\Gamma ({\bf o},\mathcal{P})\to \Gamma ({\bf o},\mathcal{M})$$
is surjective. The $\mathcal{O}_{X\vert {\bf o}}$-modules  $\mathcal{P}$ and $\mathcal{M}$ are generated by global sections and for every point $x\in {\bf o}$ the surjective map of stalks
$\mathcal{P}_{x}\to \mathcal{M}_{x}$ coincides with the localized map
\begin{equation}\label{surj on stalks} \phi _x:\Gamma ({\bf o},\mathcal{P})[S_x^{-1}]\to \Gamma ({\bf o},\mathcal{M})[S_x^{-1}]
\end{equation}
Put $C:=coker(\phi)$. Consider $C$ as an $A$-module and fix $c\in C$. The surjectivity of the map $\phi _x$ implies that the localized module $C[S_x^{-1}]$ vanishes for all $x\in {\bf o}$. Therefore the support $supp(c)\subset st({\bf o})=Spec(A)$ is disjoint from ${\bf o}$. But $C$ is also an $\Gamma =A[S^{-1}]$-module and for every point $y\in st({\bf o})\backslash {\bf o}$ there exists $s\in S$ such that $s(y)=0$. Therefore $supp(c)=\emptyset$, i.e. $c=0$.

(5)  By (1) we have $P:=\oplus _iA(m_i) =\Gamma ({\bf o},\mathcal{P})^{lf}$.
Put $K:=P\cap \Gamma ({\bf o},\mathcal{K})=\Gamma ({\bf o},\mathcal{K})^{lf}$ and $L:=P/K$. We have the morphism of exact sequences
\[\begin{CD}
0 @>>> K @>>> P @>>> L @>>> 0\\
@. @| @| @ VV\alpha V @.\\
0 @>>> \Gamma ({\bf o},\mathcal{K})^{lf} @>>> \Gamma ({\bf o},\mathcal{P})^{lf}
@>>> \Gamma ({\bf o},\mathcal{M})^{lf} @.
\end{CD}\]
and need to prove that $\alpha $ is surjective (hence an isomorphism).

Clearly, $\alpha$ is injective. It follows that the composition
$$L\otimes _A\Gamma \to \Gamma ({\bf o},\mathcal{M})^{lf}\otimes _A\Gamma \to \Gamma ({\bf o},\mathcal{M})\otimes _A\Gamma =\Gamma ({\bf o},\mathcal{M})$$
is injective. This composition is also surjective because of the commutative diagram
\begin{equation}\label{aux comm diag}
\begin{array}{ccc}
 P\otimes _A\Gamma & \to & L\otimes _A\Gamma\\
 a\downarrow & & \downarrow \\
 \Gamma ({\bf o},\mathcal{P}) & \stackrel{\phi}{\to} & \Gamma ({\bf o},\mathcal{M})
\end{array}
\end{equation}
where the map $a$ is an isomorphism and the horizontal arrows are surjections. Now the assertion follows from part (a) of Lemma \ref{equal of loc fin elts}.

(6) We have the natural commutative diagram of $\Gamma$-modules
\[\begin{CD}
0 @>>> \Gamma ({\bf o},\mathcal{K})^{lf}\otimes _A\Gamma  @>>> \Gamma ({\bf o},\mathcal{P})^{lf}\otimes _A\Gamma  @>>> \Gamma ({\bf o},\mathcal{M})^{lf}\otimes _A\Gamma  @>>> 0\\
@. @VVbV @VVaV @ VVdV @.\\
0 @>>> \Gamma ({\bf o},\mathcal{K}) @>>> \Gamma ({\bf o},\mathcal{P})
@>>> \Gamma ({\bf o},\mathcal{M}) @>>> 0
\end{CD}\]
The map $a$ is clearly an isomorphism. The map $d$ is an isomorphism as proved in (5). Hence also $b$ is an isomorphism.
\end{proof}

\begin{corollary}\label{cor that small is ab cat} The category $(\mathcal{O}_{X,T}\text{-mod})_{\bf o}$ is an abelian subcategory of $\mathcal{O}_{X,T}\text{-mod}$. It consists of all subquotients of finite direct sums $\oplus \mathcal{O}_{X\vert {\bf o}}(m_i)$.
\end{corollary}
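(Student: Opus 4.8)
The plan is to bootstrap the Corollary from Proposition \ref{main prop one orbit}. The guiding observation is that a $T$-equivariant $\mathcal{O}_X$-module $\mathcal{F}$ supported on ${\bf o}$ lies in $(\mathcal{O}_{X,T}\text{-mod})_{\bf o}$ if and only if it is generated by finitely many $T$-eigenvector global sections: a weight-$m$ section $s\in\Gamma({\bf o},\mathcal{F})$ is the same as a morphism $\mathcal{O}_{X\vert{\bf o}}(m)\to\mathcal{F}$ carrying the canonical generator to $s$, and finitely many such sections generate $\mathcal{F}$ precisely when the sum of the corresponding morphisms $\bigoplus_j\mathcal{O}_{X\vert{\bf o}}(w_j)\to\mathcal{F}$ is an epimorphism. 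Two immediate consequences: each $\mathcal{O}_{X\vert{\bf o}}(m)$ already belongs to $\mathcal{O}_{X,T}\text{-mod}$, being a quotient of $\mathcal{O}_{st({\bf o})}(m)$ via the exact sequence of \ref{basics on restr of sheaves}, so $(\mathcal{O}_{X,T}\text{-mod})_{\bf o}\subseteq\mathcal{O}_{X,T}\text{-mod}$; and, directly from the definition (Definition \ref{defn of good subcat} and that of $(\mathcal{O}_{X,T}\text{-mod})_{\bf o}$), any quotient in $\mathcal{O}_{X,T}\text{-Mod}$ of an object of $(\mathcal{O}_{X,T}\text{-mod})_{\bf o}$ is again in $(\mathcal{O}_{X,T}\text{-mod})_{\bf o}$, which handles cokernels.

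The substantial point is closure under kernels. Given $f:\mathcal{M}_1\to\mathcal{M}_2$ in $(\mathcal{O}_{X,T}\text{-mod})_{\bf o}$, let $\mathcal{I}=\operatorname{im}f\subseteq\mathcal{M}_2$; as a quotient of $\mathcal{M}_1$ it lies in $(\mathcal{O}_{X,T}\text{-mod})_{\bf o}$. Choose an epimorphism $\pi:\mathcal{P}:=\bigoplus_i\mathcal{O}_{X\vert{\bf o}}(a_i)\twoheadrightarrow\mathcal{M}_1$ and set $\mathcal{K}:=\pi^{-1}(\ker f)=\ker(\mathcal{P}\twoheadrightarrow\mathcal{I})$, so that $0\to\mathcal{K}\to\mathcal{P}\to\mathcal{I}\to 0$ is a short exact sequence of exactly the shape treated in Proposition \ref{main prop one orbit}. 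By parts (2) and (3) of that proposition, $\mathcal{K}$ is generated as a sheaf by its subspace $\Gamma({\bf o},\mathcal{K})^{lf}$ of $T$-finite global sections, and the latter is a finitely generated graded $A$-module, $A=A(st({\bf o}))$. Since $\pi$ is a $T$-equivariant epimorphism and $\mathcal{K}=\pi^{-1}(\ker f)$, the induced map $\mathcal{K}\twoheadrightarrow\ker f$ is surjective and carries $\Gamma({\bf o},\mathcal{K})^{lf}$ into the $T$-finite global sections of $\ker f$. Hence the finitely many images of a homogeneous generating set of $\Gamma({\bf o},\mathcal{K})^{lf}$ generate $\ker f$ and are $T$-eigenvector global sections; by the observation above, $\ker f\in(\mathcal{O}_{X,T}\text{-mod})_{\bf o}$. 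Together with the remark on cokernels, this shows the inclusion $(\mathcal{O}_{X,T}\text{-mod})_{\bf o}\hookrightarrow\mathcal{O}_{X,T}\text{-Mod}$ is exact, so $(\mathcal{O}_{X,T}\text{-mod})_{\bf o}$ is an abelian subcategory of $\mathcal{O}_{X,T}\text{-mod}$.

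For the description as subquotients: objects of $(\mathcal{O}_{X,T}\text{-mod})_{\bf o}$ are quotients, hence subquotients, of finite sums $\bigoplus\mathcal{O}_{X\vert{\bf o}}(m_i)$ by definition; conversely, if $\mathcal{L}\subseteq\mathcal{M}'\subseteq\mathcal{P}=\bigoplus\mathcal{O}_{X\vert{\bf o}}(m_i)$, then $\mathcal{M}'=\ker(\mathcal{P}\twoheadrightarrow\mathcal{P}/\mathcal{M}')$ lies in $(\mathcal{O}_{X,T}\text{-mod})_{\bf o}$ by the kernel-closure just proved (as $\mathcal{P}/\mathcal{M}'$ does), and then so does its quotient $\mathcal{M}'/\mathcal{L}$.

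The main obstacle is the kernel step, whose only non-formal ingredient is Proposition \ref{main prop one orbit}(3): that a subsheaf $\mathcal{K}\subseteq\bigoplus\mathcal{O}_{X\vert{\bf o}}(a_i)$ occurring as the kernel of a surjection onto an object of $(\mathcal{O}_{X,T}\text{-mod})_{\bf o}$ is generated by its module of $T$-finite global sections (which is finitely generated over the noetherian ring $A$ by part (2)). The rest is the bookkeeping of replacing an arbitrary kernel $\ker f\subseteq\mathcal{M}_1$ by its preimage in a free cover $\bigoplus\mathcal{O}_{X\vert{\bf o}}(a_i)\twoheadrightarrow\mathcal{M}_1$ so as to land in the exact-sequence situation of Proposition \ref{main prop one orbit}. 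Alternatively one may observe that $\Gamma({\bf o},-)^{lf}$ induces an equivalence $(\mathcal{O}_{X,T}\text{-mod})_{\bf o}\simeq A(st({\bf o}))\text{-mod}$, from which both assertions are immediate; but the direct argument above is the shortest route.
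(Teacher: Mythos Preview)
Your proof is correct and follows essentially the same route as the paper: the key step is to show that any subobject $\mathcal{K}$ of a finite sum $\bigoplus\mathcal{O}_{X\vert{\bf o}}(m_i)$ lies in $(\mathcal{O}_{X,T}\text{-mod})_{\bf o}$, using parts (2) and (3) of Proposition~\ref{main prop one orbit} to see that $\mathcal{K}$ is generated by the finitely generated graded $A$-module $\Gamma({\bf o},\mathcal{K})^{lf}$, and then the rest is formal. Your extra bookkeeping of pulling an arbitrary kernel $\ker f\subseteq\mathcal{M}_1$ back to a free cover is a harmless reduction to this situation; just note that the alternative you mention via the equivalence $\Gamma({\bf o},-)^{lf}$ is the content of the \emph{next} corollary in the paper, so invoking it here would be circular.
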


\begin{proof} Let $\mathcal{K}\subset \bigoplus  \mathcal{O}_{X\vert {\bf o}}(m_i)$ be as in Proposition \ref{main prop one orbit}.
It suffices to show that $\mathcal{K}\in (\mathcal{O}_{X,T}\text{-mod})_{\bf o}$.
By (2) and (3) of this proposition the sheaf $\mathcal{K}$ is generated by the subspace $\Gamma ({\bf o},\mathcal{K})^{lf}$ of its global sections $\Gamma ({\bf o},\mathcal{K})$, and $\Gamma ({\bf o},\mathcal{K})^{lf}$ is a finitely generated graded $A$-module. In particular, the natural map
$$\Gamma ({\bf o},\mathcal{K})^{lf}\otimes _A\mathcal{O}_{X\vert{\bf o}}\to \mathcal{K}$$
is an isomorphism. Choose a finite sum $Q:=\oplus A(m)$ and a surjection of graded $A$-modules $Q\to \Gamma ({\bf o},\mathcal{K})^{lf}$. This induces a surjection of equivariant $\mathcal{O}_X$-modules
$$\bigoplus \mathcal{O}_{X\vert {\bf o}}(n) \simeq
Q\otimes _A\mathcal{O}_{X\vert {\bf o}}\to
\Gamma ({\bf o},\mathcal{K})^{lf}\otimes _A\mathcal{O}_{X\vert {\bf o}}\simeq \mathcal{K}$$
So $\mathcal{K}\in
(\mathcal{O}_{X,T}\text{-mod})\vert _{\bf o}$. Thus every subquotient of a finite sum
$\bigoplus \mathcal{O}_{X\vert {\bf o}}(m_i)$ is in $(\mathcal{O}_{X,T}\text{-mod})\vert _{\bf o}$, and these form an abelian subcategory of $\mathcal{O}_{X,T}\text{-Mod}$.
\end{proof}

\begin{corollary}\label{equiv of cat for one orbit} The abelian category $(\mathcal{O}_{X,T}\text{-mod}) _{\bf o}$ is equivalent to the category $A\text{-mod}$   of finitely generated graded $A$-modules. Namely the functors
$$\mathcal{L}: L\mapsto L\otimes _A\mathcal{O}_{X\vert {\bf o}}\quad \text{and}\quad \gamma :\mathcal{M}\mapsto \Gamma ({\bf o},\mathcal{M})^{lf}$$
are exact and are the mutually inverse equivalences of categories.
\end{corollary}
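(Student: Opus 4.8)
The plan is to assemble the statement from Proposition~\ref{main prop one orbit} and Corollary~\ref{cor that small is ab cat}, which already carry all the substantive input, together with one elementary remark on the local geometry of $st({\bf o})$. First I would check that the two functors are well defined and that $\mathcal{L}$ is exact. Given $L\in A\text{-mod}$, a finite presentation $F_1\to F_0\to L\to 0$ by finite free graded $A$-modules gives, after applying the right-exact functor $-\otimes_A\mathcal{O}_{X\vert{\bf o}}$, a surjection $\mathcal{L}(F_0)=\bigoplus\mathcal{O}_{X\vert{\bf o}}(m_i)\twoheadrightarrow\mathcal{L}(L)$, so $\mathcal{L}(L)\in(\mathcal{O}_{X,T}\text{-mod})_{\bf o}$; and $\mathcal{L}$ is exact because its stalk at $x\in{\bf o}$ is the localization $L\mapsto L\otimes_A\mathcal{O}_{X,x}$, while off ${\bf o}$ it is zero. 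That $\gamma$ takes values in $A\text{-mod}$ is parts (2) and (5) of Proposition~\ref{main prop one orbit}.

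Next I would show $\mathcal{L}\circ\gamma\cong\mathrm{id}$. Fix $\mathcal{M}\in(\mathcal{O}_{X,T}\text{-mod})_{\bf o}$ and choose $0\to\mathcal{K}\to\mathcal{P}\to\mathcal{M}\to 0$ with $\mathcal{P}=\bigoplus\mathcal{O}_{X\vert{\bf o}}(m_i)$; then $\mathcal{K}\in(\mathcal{O}_{X,T}\text{-mod})_{\bf o}$ by Corollary~\ref{cor that small is ab cat}. By Proposition~\ref{main prop one orbit}(1) the counit $\mathcal{L}\gamma(\mathcal{P})\to\mathcal{P}$ is an isomorphism, and by the proof of Corollary~\ref{cor that small is ab cat} so is $\mathcal{L}\gamma(\mathcal{K})\to\mathcal{K}$. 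Applying the exact functor $\mathcal{L}$ to the short exact sequence $0\to\gamma\mathcal{K}\to\gamma\mathcal{P}\to\gamma\mathcal{M}\to 0$ of Proposition~\ref{main prop one orbit}(5), comparing with $0\to\mathcal{K}\to\mathcal{P}\to\mathcal{M}\to 0$ via the counit, and invoking the five lemma, one gets $\mathcal{L}\gamma(\mathcal{M})\xrightarrow{\sim}\mathcal{M}$, naturally in $\mathcal{M}$.

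For $\gamma\circ\mathcal{L}\cong\mathrm{id}$, take $L\in A\text{-mod}$ with presentation $F_1\to F_0\to L\to 0$ and set $R=\ker(F_0\to L)$, a finitely generated graded submodule. Exactness of $\mathcal{L}$ gives $0\to\mathcal{L}(R)\to\mathcal{L}(F_0)\to\mathcal{L}(L)\to 0$ with $\mathcal{L}(R)\subset\mathcal{L}(F_0)=\bigoplus\mathcal{O}_{X\vert{\bf o}}(m_i)$, and Proposition~\ref{main prop one orbit}(1),(5) yield $\gamma\mathcal{L}(L)\cong F_0/\gamma\mathcal{L}(R)$, where by Proposition~\ref{main prop one orbit}(2) the module $\gamma\mathcal{L}(R)$ is a graded submodule of $F_0$. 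It suffices to prove $\gamma\mathcal{L}(R)=R$. The inclusion $R\subseteq\gamma\mathcal{L}(R)$ is clear; for the reverse, since $\gamma\mathcal{L}(R)$ is graded we may take a homogeneous $f\in\gamma\mathcal{L}(R)$. Its germ at each $x\in{\bf o}$ lies in $\mathcal{L}(R)_x=R\otimes_A\mathcal{O}_{X,x}\subseteq F_0\otimes_A\mathcal{O}_{X,x}$, so the image $\bar f\in L=F_0/R$ vanishes in $L\otimes_A\mathcal{O}_{X,x}$ for every $x\in{\bf o}$; as $\bar f$ is homogeneous, $V(\mathrm{Ann}_A(\bar f))$ is a $T$-invariant closed subset of $st({\bf o})=\mathrm{Spec}\,A$ disjoint from ${\bf o}$. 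But ${\bf o}={\bf o}_\sigma$ is the closed orbit of $st({\bf o})$ (it lies in the closure of every orbit contained in $st({\bf o})$), so any nonempty $T$-invariant closed subset of $st({\bf o})$ meets ${\bf o}$; hence $\bar f=0$, i.e. $f\in R$, and therefore $\gamma\mathcal{L}(L)\cong L$, naturally.

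Having produced mutually inverse natural isomorphisms, $\mathcal{L}$ and $\gamma$ are equivalences of abelian categories; in particular $\gamma$, as a quasi-inverse of the exact functor $\mathcal{L}$, is itself exact (one may also see right-exactness of $\gamma$ directly from Proposition~\ref{main prop one orbit}(5) after covering the middle term of a short exact sequence by a sum $\bigoplus\mathcal{O}_{X\vert{\bf o}}(m_i)$). The step I expect to be the main obstacle is the final identification $\gamma\mathcal{L}(R)=R$: the delicate facts --- that the relevant sequences of sections over ${\bf o}$ are exact, that $\Gamma({\bf o},\mathcal{K})^{lf}$ is finitely generated, that subsheaves are generated by their $T$-finite sections --- are already packaged in Proposition~\ref{main prop one orbit} and Corollary~\ref{cor that small is ab cat}, so what is really left is to see that $\gamma$ does not overshoot $R$, which comes down to ${\bf o}$ being the unique closed orbit in $st({\bf o})$.
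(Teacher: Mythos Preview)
Your proof is correct and follows essentially the same strategy as the paper: both directions are reduced to Proposition~\ref{main prop one orbit} and Corollary~\ref{cor that small is ab cat}, and the one nontrivial point is the support argument showing that a homogeneous element of a graded $A$-module which vanishes after localizing at every point of ${\bf o}$ must be zero. The only organizational difference is that for $\gamma\mathcal{L}\cong\mathrm{id}$ the paper works directly with the unit map $\tau:L\to\gamma\mathcal{L}(L)$, proving injectivity via Lemma~\ref{lemma kernel of loc} (which encodes exactly your ``${\bf o}$ is the unique closed orbit'' observation) and surjectivity via a free cover, whereas you pass through a presentation and reduce to the identity $\gamma\mathcal{L}(R)=R$ inside $F_0$; these are two packagings of the same computation.
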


\begin{proof} Let $L\in A\text{-mod}$ and $\mathcal{M}=L\otimes _A\mathcal{O}_{X\vert {\bf o}}$. We claim that the natural map of $A$-modules
$$\tau: L\to \Gamma ({\bf o},\mathcal{M})^{lf}$$
is an isomorphism.

First we claim that $\tau $ is injective. It suffices to show that the natural composition of maps
\begin{equation}\label{comp of maps}
L\to L\otimes _A\Gamma \to \Gamma ({\bf o},\mathcal{M})
\end{equation}
is injective. The first map in \eqref{comp of maps} is injective by Lemma \ref{lemma kernel of loc}. The vanishing of the kernel of the second one is established in the same way as the vanishing of the $\Gamma$-module $C$ in the proof of (4) of Proposition \ref{main prop one orbit}.

To prove that $\tau$ is surjective, choose a finite sum $Q:=\oplus A(m)$ and a surjection of graded $A$-modules $Q\to L$. This induces the surjection in $(\mathcal{O}_{X,T}\text{-mod})\vert _{\bf o}$:
$$Q\otimes \mathcal{O}_{X\vert {\bf o}}\to \mathcal{M}$$
We have the commutative diagram of $A$-modules
\[\begin{CD}
Q @>>> L \\
@VVV @ VV\tau V\\
\Gamma ({\bf o},Q\otimes \mathcal{O}_{X\vert {\bf o}})^{lf} @>>>
\Gamma ({\bf o},\mathcal{M})^{lf}
\end{CD}\]
where the left vertical arrow is an isomorphism by Part (1) in
Proposition \ref{main prop one orbit} and the lower horizontal arrow is surjective by part (5) of that Proposition. Hence $\tau$ is surjective.

Vice versa, if $\mathcal{M}\in
(\mathcal{O}_{X,T}\text{-mod})\vert _{\bf o}$, then by part (3) of Proposition \ref{main prop one orbit} the
subspace $\Gamma ({\bf o},\mathcal{M})^{lf}\subset \Gamma ({\bf o},\mathcal{M})$ is in $A\text{-mod}$ and generates the $\mathcal{O}_{X\vert {\bf o}}$-module
$\mathcal{M}$, i.e. the natural map
$$\Gamma ({\bf o},\mathcal{M})^{lf}\otimes _A\mathcal{O}_{X\vert {\bf o}}\to \mathcal{M}$$
is an isomorphism.
\end{proof}

As we noted in Section \ref{sect on def of twisting} the category $A\text{-mod}$ is also equivalent to the category $coh_{st({\bf o}),T}$ of coherent $T$-equivariant sheaves on $st({\bf o})$. This equivalence is given by the localization functor
$L\mapsto L\otimes _A\mathcal{O}_{st({\bf o})}$. The composition of this equivalence with the functor
$$coh_{st({\bf o}),T}\to (\mathcal{O}_{X,T}\text{-mod}) _{\bf o},\quad \mathcal{F}\mapsto \mathcal{F}_{\bf o}$$ is the equivalence in Corollary \ref{equiv of cat for one orbit}.

Thus we obtain the following corollary.

\begin{corollary} The functor
$$coh_{st({\bf o}),T}\to (\mathcal{O}_{X,T}\text{-mod}) _{\bf o},\quad \mathcal{F}\mapsto \mathcal{F}_{\bf o}$$
is an equivalence of categories.
\end{corollary}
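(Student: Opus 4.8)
The plan is to deduce this final corollary directly from the chain of equivalences already established, so that essentially no new work is required. The target statement asserts that the functor $coh_{st({\bf o}),T}\to (\mathcal{O}_{X,T}\text{-mod})_{\bf o}$, $\mathcal{F}\mapsto \mathcal{F}_{\bf o}$, is an equivalence. By the discussion in Section \ref{sect on def of twisting}, the global sections functor $\Gamma(st({\bf o}),-)$ identifies $coh_{st({\bf o}),T}$ with the category $A\text{-mod}$ of finitely generated graded $A$-modules, where $A=A(st({\bf o}))$; the quasi-inverse is the localization functor $L\mapsto L\otimes_A\mathcal{O}_{st({\bf o})}$. On the other hand, Corollary \ref{equiv of cat for one orbit} gives an equivalence $\mathcal{L}:A\text{-mod}\to(\mathcal{O}_{X,T}\text{-mod})_{\bf o}$, $L\mapsto L\otimes_A\mathcal{O}_{X\vert{\bf o}}$, with quasi-inverse $\gamma:\mathcal{M}\mapsto\Gamma({\bf o},\mathcal{M})^{lf}$. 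So the composite $coh_{st({\bf o}),T}\xrightarrow{\sim} A\text{-mod}\xrightarrow{\sim}(\mathcal{O}_{X,T}\text{-mod})_{\bf o}$ is an equivalence; it remains only to check that this composite is (isomorphic to) the functor $\mathcal{F}\mapsto\mathcal{F}_{\bf o}$.

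The key step, then, is the identification of the composite. Starting from $\mathcal{F}\in coh_{st({\bf o}),T}$, the first equivalence sends it to $L:=\Gamma(st({\bf o}),\mathcal{F})\in A\text{-mod}$, and then $\mathcal{L}$ sends $L$ to $L\otimes_A\mathcal{O}_{X\vert{\bf o}}$. So I must produce a natural isomorphism
\begin{equation*}
\Gamma(st({\bf o}),\mathcal{F})\otimes_A\mathcal{O}_{X\vert{\bf o}}\;\xrightarrow{\ \sim\ }\;\mathcal{F}_{\bf o}.
\end{equation*}
This follows because $\mathcal{F}$, being coherent and equivariant on the affine $st({\bf o})$, is the localization $\Gamma(st({\bf o}),\mathcal{F})\otimes_A\mathcal{O}_{st({\bf o})}$, and the functor $(-)_{\bf o}=i_!i^{-1}(-)$ (for $i:{\bf o}\hookrightarrow st({\bf o})$) commutes with $\otimes_A(-)$ because extension by zero is restriction to ${\bf o}$ on stalks (Section \ref{basics on restr of sheaves}) and tensoring is computed stalkwise; applying $(-)_{\bf o}$ to $\mathcal{O}_{st({\bf o})}$ gives exactly $\mathcal{O}_{X\vert{\bf o}}$ by the Notation at the end of Section \ref{basics on restr of sheaves}. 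Concretely: $\bigl(\Gamma(st({\bf o}),\mathcal{F})\otimes_A\mathcal{O}_{st({\bf o})}\bigr)_{\bf o}=\Gamma(st({\bf o}),\mathcal{F})\otimes_A\mathcal{O}_{st({\bf o})}{}_{\bf o}=\Gamma(st({\bf o}),\mathcal{F})\otimes_A\mathcal{O}_{X\vert{\bf o}}$, and the left side is $\mathcal{F}_{\bf o}$. Hence the composite equals the functor in the statement, and it is an equivalence.

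The step I expect to require the most care is the naturality and well-definedness of the comparison isomorphism, i.e. checking that $(-)_{\bf o}$ genuinely commutes with $(-)\otimes_A\mathcal{O}_{st({\bf o})}$ as functors on $A\text{-mod}$, not merely that the two sides agree on objects. The cleanest way to see this is to note that for a locally closed $T$-invariant $i:{\bf o}\hookrightarrow X$ the functor $\mathcal{G}\mapsto\mathcal{G}_{\bf o}$ is $i_!i^{-1}$, that $i^{-1}$ is monoidal and commutes with arbitrary colimits (in particular with $\otimes_A$, which on a quasi-coherent sheaf is a colimit built from a presentation of the $A$-module), and that $i_!$ is exact and sends the restricted structure sheaf $i^{-1}\mathcal{O}_{st({\bf o})}$ to $\mathcal{O}_{X\vert{\bf o}}$. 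Everything else in the argument is formal bookkeeping of the equivalences already proved, so once this commutation is in hand the corollary is immediate.
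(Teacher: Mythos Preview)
Your proof is correct and follows essentially the same approach as the paper: both arguments factor the functor $\mathcal{F}\mapsto\mathcal{F}_{\bf o}$ through the equivalences $coh_{st({\bf o}),T}\simeq A\text{-mod}$ (Section \ref{sect on def of twisting}) and $A\text{-mod}\simeq(\mathcal{O}_{X,T}\text{-mod})_{\bf o}$ (Corollary \ref{equiv of cat for one orbit}), reducing the claim to the identification $(L\otimes_A\mathcal{O}_{st({\bf o})})_{\bf o}\simeq L\otimes_A\mathcal{O}_{X\vert{\bf o}}$. The paper states this identification without comment, while you supply the extra justification via $i_!i^{-1}$ commuting with the tensor product; this is a reasonable elaboration of a step the paper leaves implicit.
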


\subsection{Study of the category $\mathcal{O}_{X,T}\text{-mod}$}

\begin{prop} \label{prop on general abelian subcat} The category $\mathcal{O}_{X,T}\text{-mod}$ is an abelian subcategory of $\mathcal{O}_{X,T}\text{-Mod}$.
\end{prop}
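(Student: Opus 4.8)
The plan is to verify that $\mathcal{O}_{X,T}\text{-mod}$ is closed, inside $\mathcal{O}_{X,T}\text{-Mod}$, under finite direct sums, cokernels and kernels. Closure under finite direct sums and under quotients is immediate from Definition \ref{defn of good subcat} (a direct sum of quotients of finite sums $\bigoplus\mathcal{O}_{st({\bf o})}(m)$ is again such a quotient, and so is a quotient of such a quotient). In particular images and cokernels, being quotients of the source resp.\ of the target of the given map, lie in $\mathcal{O}_{X,T}\text{-mod}$, so the whole point is closure under kernels. Given $f\colon\mathcal{F}\to\mathcal{G}$ in $\mathcal{O}_{X,T}\text{-mod}$, choose a finite surjection $\pi\colon\mathcal{P}=\bigoplus_j\mathcal{O}_{st({\bf o}_{\tau_j})}(m_j)\twoheadrightarrow\mathcal{F}$; then $\ker f$ is the quotient of $\pi^{-1}(\ker f)$ by $\ker\pi$, so it suffices to prove that \emph{every subsheaf $\mathcal{K}\subset\mathcal{P}=\bigoplus_j\mathcal{O}_{st({\bf o}_{\tau_j})}(m_j)$ (a finite sum) belongs to $\mathcal{O}_{X,T}\text{-mod}$}. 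For this I would construct, orbit by orbit, an explicit finite surjection onto $\mathcal{K}$ by sheaves of the form $\mathcal{O}_{st({\bf o}_\sigma)}(m)$.

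Fix $\sigma\in\Sigma$. Applying the exact functor $(-)_{{\bf o}_\sigma}$ to $\mathcal{K}\hookrightarrow\mathcal{P}$ and using that $(\mathcal{O}_{st({\bf o}_{\tau_j})})_{{\bf o}_\sigma}$ equals $\mathcal{O}_{X\vert{\bf o}_\sigma}$ when $\sigma$ is a face of $\tau_j$ and $0$ otherwise, one gets an inclusion $\mathcal{K}_{{\bf o}_\sigma}\hookrightarrow\bigoplus_{j:\,\sigma\subset\tau_j}\mathcal{O}_{X\vert{\bf o}_\sigma}(m_j)$. By Corollary \ref{cor that small is ab cat} this puts $\mathcal{K}_{{\bf o}_\sigma}$ in $(\mathcal{O}_{X,T}\text{-mod})_{{\bf o}_\sigma}$, and by Corollary \ref{equiv of cat for one orbit} the module $L_\sigma:=\Gamma({\bf o}_\sigma,\mathcal{K}_{{\bf o}_\sigma})^{lf}$ is a finitely generated graded $A(st({\bf o}_\sigma))$-module with $\mathcal{K}_{{\bf o}_\sigma}=L_\sigma\otimes_{A(st({\bf o}_\sigma))}\mathcal{O}_{X\vert{\bf o}_\sigma}$; in particular $\mathcal{K}_{{\bf o}_\sigma}$ is generated by $L_\sigma$. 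Fix a finite set $G_\sigma$ of homogeneous generators of $L_\sigma$.

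The crux of the argument — the step I expect to require the most care, and which is an analogue of part (3) of Proposition \ref{main prop one orbit} — is that each $g\in G_\sigma$ extends to a section $\hat g\in\Gamma(st({\bf o}_\sigma),\mathcal{K})$. Being $T$-finite, $g$ lies in $\Gamma({\bf o}_\sigma,\mathcal{P}_{{\bf o}_\sigma})^{lf}=\bigoplus_{j:\,\sigma\subset\tau_j}A(st({\bf o}_\sigma))(m_j)$, i.e.\ it is a tuple of regular functions on $st({\bf o}_\sigma)$ of the appropriate weights; since $st({\bf o}_\sigma)\subset st({\bf o}_{\tau_j})$ whenever $\sigma\subset\tau_j$, this tuple defines a section $\hat g$ of $\mathcal{P}$ over $st({\bf o}_\sigma)$ whose germ along ${\bf o}_\sigma$ is $g$, so $\hat g$ agrees over some open $U\supset{\bf o}_\sigma$ with a section of the subsheaf $\mathcal{K}$. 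Now $\hat g$ is a $T$-eigensection and $st({\bf o}_\sigma)$ is the minimal $T$-invariant open containing ${\bf o}_\sigma$, so the translates $\{tU\}_{t\in T}$ cover $st({\bf o}_\sigma)$ and, $\mathcal{K}$ being a $T$-equivariant subsheaf of $\mathcal{P}$, $\hat g$ restricts into $\mathcal{K}$ over each $tU$; these local statements glue to give $\hat g\in\Gamma(st({\bf o}_\sigma),\mathcal{K})$, as wanted.

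Each $\hat g$ then corresponds, under $\operatorname{Hom}_{\mathcal{O}_X,T}\bigl(\mathcal{O}_{st({\bf o}_\sigma)}(m_g),\mathcal{K}\bigr)=\Gamma(st({\bf o}_\sigma),\mathcal{K})^{m_g}$, to a morphism $\phi_g\colon\mathcal{O}_{st({\bf o}_\sigma)}(m_g)\to\mathcal{K}$, and I claim the map
\[
\Phi\colon\ \bigoplus_{\sigma\in\Sigma}\ \bigoplus_{g\in G_\sigma}\ \mathcal{O}_{st({\bf o}_\sigma)}(m_g)\ \longrightarrow\ \mathcal{K}
\]
(a finite direct sum, as $\Sigma$ is finite and each $G_\sigma$ finite) is surjective. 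This is checked on stalks: for $x\in{\bf o}_\sigma$ one has $\mathcal{K}_x=(\mathcal{K}_{{\bf o}_\sigma})_x=L_\sigma\otimes_{A(st({\bf o}_\sigma))}\mathcal{O}_{X,x}$, and the germ of $\hat g$ at $x$ is the image of $g$ under $L_\sigma\to(\mathcal{K}_{{\bf o}_\sigma})_x$; since $G_\sigma$ generates $L_\sigma$, the germs $(\phi_g)_x(1)$, $g\in G_\sigma$, generate $\mathcal{K}_x$ over $\mathcal{O}_{X,x}$. Hence $\mathcal{K}$ is a quotient of a finite sum of sheaves $\mathcal{O}_{st({\bf o})}(m)$, i.e.\ $\mathcal{K}\in\mathcal{O}_{X,T}\text{-mod}$, which completes the reduced statement and the Proposition. (An alternative would be an induction on the number of orbits, peeling off a closed orbit ${\bf o}_\rho$ and using the exact sequence $0\to\mathcal{K}_{X\setminus{\bf o}_\rho}\to\mathcal{K}\to\mathcal{K}_{{\bf o}_\rho}\to0$, but there the recombination of the two pieces is itself nontrivial, whereas the direct construction above sidesteps it.)
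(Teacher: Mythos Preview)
Your proof is correct and follows essentially the same approach as the paper: reduce to showing that any subsheaf $\mathcal{K}$ of a finite sum $\bigoplus\mathcal{O}_{st({\bf o})}(m)$ lies in $\mathcal{O}_{X,T}\text{-mod}$, and then use the key lifting step---extending a homogeneous section $g\in\Gamma({\bf o}_\sigma,\mathcal{K}_{{\bf o}_\sigma})^{lf}$ to a section $\hat g\in\Gamma(st({\bf o}_\sigma),\mathcal{K})$ by $T$-equivariance (your translates-of-$U$ argument is exactly the paper's ``$U$ meets every orbit'' argument). The only difference is organizational: the paper first reduces to $\mathcal{K}=\mathcal{K}_{st({\bf o})}$ and then runs the induction on the number of orbits that you mention parenthetically at the end, whereas you build the global surjection $\Phi$ directly by doing the lifting for every $\sigma$ at once; your version is a little cleaner and avoids the inductive bookkeeping, but the substance is the same.
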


\begin{proof} Clearly the subcategory $\mathcal{O}_{X,T}\text{-mod}$ is closed under quotients. It suffices to prove that any subobject  $\mathcal{K}$ of a finite direct sum $\mathcal{P}:=\bigoplus _{{\bf o},m} \mathcal{O}_{st(o)}(m)$ belongs to $\mathcal{O}_{X,T}\text{-mod}$. For each orbit ${\bf o}$ the sheaf $\mathcal{K}_{st({\bf o})}$ is a subobject of $\mathcal{K}$. Hence $\mathcal{K}$ is the quotient of the direct sum $\bigoplus \mathcal{K}_{st({\bf o})}$ and we may assume that for a fixed orbit ${\bf o}\subset X$
$$\mathcal{K}=\mathcal{K}_{st({\bf o})}\subset \mathcal{P}_{st({\bf o})}$$
The sheaf $\mathcal{P}_{st({\bf o})}$ is a finite sum of sheaves
$\mathcal{O}_{st({\bf o}')}(m)$ for some orbits ${\bf o}'\subset st({\bf o})$.

First assume that ${\bf o}=T$ is the dense orbit. Then $\mathcal{K}_{st({\bf o})}=\mathcal{K}_{\bf o}$, $\mathcal{O}_{st({\bf o})}=\mathcal{O}_{X\vert {\bf o}}$ and so $\mathcal{K}_{st({\bf o})}$ is in $(\mathcal{O}_{X,T}\text{-mod})_{\bf o}\subset \mathcal{O}_{X,T}\text{-mod}$ by Corollary \ref{cor that small is ab cat}.

For a general orbit ${\bf o}$, let $V=st({\bf o})\backslash {\bf o}\subset st({\bf o})$, and consider the exact sequence in
$\mathcal{O}_{X,T}\text{-Mod}$
\begin{equation} 0\to \mathcal{K}_V\to \mathcal{K}\stackrel{\theta}\to \mathcal{K}_{\bf o}\to 0
\end{equation}
It suffices to find a morphism $\mathcal{Q}:=\bigoplus _n \mathcal{O}_{st({\bf o})}(n)\stackrel{q}{\to }\mathcal{K}$ such that the composition $\theta \cdot q :\mathcal{Q}\to \mathcal{K}_{\bf o}$ is surjective. Indeed, we may assume by induction on the number of orbits in $V$, that there exists a surjection from a finite direct sum
$$\mathcal{R}:=\bigoplus _{{\bf o}',m'}\mathcal{O}_{st({\bf o}')}(m')\stackrel{r}{\longrightarrow} \mathcal{K}_V$$
Then the map $(r,q):\mathcal{R}\oplus \mathcal{Q}\to \mathcal{K}$ is surjective and so $\mathcal{K}\in \mathcal{O}_{X,T}\text{-mod}$.

To find the desired morphism $q:\mathcal{Q}\to \mathcal{K}$ as above consider the embedding $\mathcal{K}_{\bf o}\subset \mathcal{P}_{\bf o}$. This is the situation of Proposition \ref{main prop one orbit} above. Recall that
$\Gamma ({\bf o},\mathcal{P}_{\bf o})^{lf}=P:=\bigoplus A(m)$ where $A=A(st({\bf o}))$.
That is the natural morphism $\mathcal{P}\to \mathcal{P}_{\bf o}$ induces an isomorphism $\Gamma (st({\bf o}),\mathcal{P})\to \Gamma ({\bf o},\mathcal{P}_{\bf o})^{lf}$.
The sheaf $\mathcal{K}_{\bf o}$ are generated (as $\mathcal{O}_X$-module) by the subspace $K:=\Gamma ({\bf o},\mathcal{K}_{\bf o})^{lf}=\Gamma ({\bf o},\mathcal{K}_{\bf o})\cap P$ (Prop. \ref{main prop one orbit}).
Choose a surjection of finitely generated graded $A$-modules $\bigoplus A(n)\to K$ and put
$$\mathcal{Q}:=\left(\bigoplus A(n)\right)\otimes _A \mathcal{O}_{st({\bf o})}=\bigoplus \mathcal{O}_{st({\bf o})}(n)$$

We have the commutative diagram in $\mathcal{O}_{X,T}\text{-Mod}$:
$$\begin{array}{ccc}
\mathcal{Q} & & \\
\downarrow & & \\
K\otimes _A\mathcal{O}_{st({\bf o})} & \hookrightarrow & \mathcal{P} \\
\downarrow & & \downarrow \\
\mathcal{K}_{\bf o} & \hookrightarrow & \mathcal{P}_{\bf o}
\end{array}
$$
where the composition $\mathcal{Q}\to \mathcal{K}_{\bf o}$ is surjective. So it suffices to prove that the subspace $K\subset \Gamma (st({\bf o}),\mathcal{P})$ belongs to $\Gamma (st({\bf o}),\mathcal{K})$, i.e. the map
$$\delta :\Gamma (st({\bf o}),\mathcal{K})^{lf}\to \Gamma ({\bf o},\mathcal{K}_{\bf o})^{lf}=K$$
is surjective.

Let ${\bf a}=\sum_{m\in M}{\bf a}^m\in K\subset P$. It suffices to prove that each $T$-eigenvector ${\bf a}^m$ comes from a section in $\Gamma (st({\bf o}),\mathcal{K})^{lf}$. Replacing the pair $\mathcal{K}\subset \mathcal{P}$ by the pair $\mathcal{K}(-m)\subset \mathcal{P}(-m)$
 we may assume that $m=0$, i.e. ${\bf a}^0\subset K$ is a $T$-invariant
element in $\Gamma ({\bf o},\mathcal{K}_{\bf o})^{lf}$. It comes from a section
${\bf a}_U\in \Gamma (U,\mathcal{K})$ for an open subset ${\bf o}\subset U\subset st({\bf o})$. Then $U$ intersects every orbit ${\bf o}'\subset st({\bf o})$. Therefore the $T$-invariant section ${\bf a}_U$ comes from an invariant section in
$\Gamma (st({\bf o}),\mathcal{K})$.  This shows that the map $\delta$ is surjective and finishes the proof of Proposition \ref{prop on general abelian subcat}.
\end{proof}

\begin{lemma}\label{gl sect on st and o} Let $\mathcal{M}\in \mathcal{O}_{X,T}\text{-mod}$. Fix an orbit ${\bf o}\subset X$ and let $V:=st({\bf o})\backslash {\bf o}$. Consider the short exact sequence in $\mathcal{O}_{X,T}\text{-Mod}$
$$0\to \mathcal{M}_V\to \mathcal{M}_{st({\bf o})}\to \mathcal{M}_{\bf o}\to 0$$
Then the map $\Gamma (st({\bf o}),\mathcal{M}_{st({\bf o})})^{lf}\stackrel{\beta}{\to} \Gamma ({\bf o},\mathcal{M}_{\bf o})^{lf}$ is an isomorphism.
\end{lemma}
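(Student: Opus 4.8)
The plan is to take global sections over $st({\bf o})$ of the displayed short exact sequence and then pass to $T$-finite vectors. Since $\Gamma(st({\bf o}),-)$ is left exact, the operation $R\mapsto R^{lf}$ is a left exact subfunctor of the identity on $T$-modules, and $\mathcal{M}_{\bf o}$ is supported on the closed subset ${\bf o}\subset st({\bf o})$ (so that $\Gamma(st({\bf o}),\mathcal{M}_{\bf o})=\Gamma({\bf o},\mathcal{M}_{\bf o})$), this produces an exact sequence
\[
0\to \Gamma(st({\bf o}),\mathcal{M}_V)^{lf}\to \Gamma(st({\bf o}),\mathcal{M}_{st({\bf o})})^{lf}\stackrel{\beta}{\longrightarrow} \Gamma({\bf o},\mathcal{M}_{\bf o})^{lf}.
\]
Thus it remains to show that $\beta$ is both injective and surjective.

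For injectivity I will show $\Gamma(st({\bf o}),\mathcal{M}_V)^{lf}=0$. A $T$-finite global section of $\mathcal{M}_V$ is a finite sum of $T$-eigenvectors, each still lying in the $T$-submodule $\Gamma(st({\bf o}),\mathcal{M}_V)\subset\Gamma(st({\bf o}),\mathcal{M}_{st({\bf o})})$. The support of an eigenvector section, regarded inside $\mathcal{M}_{st({\bf o})}$, is $T$-invariant and — by the description of the extension by zero in Section \ref{basics on restr of sheaves} — is a closed subset of $st({\bf o})$ contained in $V=st({\bf o})\setminus{\bf o}$. But $st({\bf o})$ is the affine toric variety with coordinate ring $A:=A(st({\bf o}))$, and ${\bf o}$ is its unique closed orbit, so every nonempty closed $T$-invariant subset of $st({\bf o})$ contains ${\bf o}$; hence the support is empty, the eigenvector vanishes, and $\beta$ is injective.

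For surjectivity I will compare $\mathcal{M}$ with a presentation by generators. Since $\mathcal{M}\in\mathcal{O}_{X,T}\text{-mod}$, choose a surjection $\pi:\mathcal{P}\to\mathcal{M}$ with $\mathcal{P}=\bigoplus_k\mathcal{O}_{st({\bf o}_k)}(m_k)$ a finite direct sum. Applying the exact functors $(-)_{st({\bf o})}$ and $(-)_{\bf o}$ of Section \ref{basics on restr of sheaves}, then $\Gamma(st({\bf o}),-)^{lf}$, and using naturality of $\mathcal{F}_{st({\bf o})}\to\mathcal{F}_{\bf o}$ in $\mathcal{F}$, I obtain a commutative square whose left vertical arrow is $\beta_{\mathcal{P}}$, whose right vertical arrow is $\beta$, whose top arrow is induced by $\pi_{st({\bf o})}$, and whose bottom arrow is $\gamma(\mathcal{P}_{\bf o})\to\gamma(\mathcal{M}_{\bf o})$ for the exact equivalence $\gamma:(\mathcal{O}_{X,T}\text{-mod})_{\bf o}\xrightarrow{\ \sim\ }A\text{-mod}$, $\gamma(\mathcal{N})=\Gamma({\bf o},\mathcal{N})^{lf}$, of Corollary \ref{equiv of cat for one orbit} (both $\mathcal{P}_{\bf o}$ and $\mathcal{M}_{\bf o}$ lie in $(\mathcal{O}_{X,T}\text{-mod})_{\bf o}$). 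The bottom arrow is surjective because $\gamma$ is exact and $\pi_{\bf o}$ is an epimorphism, so once $\beta_{\mathcal{P}}$ is shown to be an isomorphism a one-step diagram chase lifts any $\xi\in\Gamma({\bf o},\mathcal{M}_{\bf o})^{lf}$ to $\Gamma(st({\bf o}),\mathcal{M}_{st({\bf o})})^{lf}$. Since all the functors involved commute with finite direct sums and with the twists $(m)$, checking $\beta_{\mathcal{P}}$ reduces to one summand $\mathcal{O}_{st({\bf o}')}$; using that $[\tau']\cap[\sigma]=[\tau'\cap\sigma]$ in a fan, one has $(\mathcal{O}_{st({\bf o}')})_{st({\bf o})}=\mathcal{O}_{st({\bf o}'')}$ where ${\bf o}''$ is the orbit of $\tau'\cap\sigma$ ($\tau',\sigma$ the cones of ${\bf o}',{\bf o}$). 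If $\sigma\subseteq\tau'$ then $\mathcal{O}_{st({\bf o}'')}=\mathcal{O}_{st({\bf o})}$ and $(\mathcal{O}_{st({\bf o}')})_{\bf o}=\mathcal{O}_{X|{\bf o}}$, so both sides of $\beta_{\mathcal{P}}$ equal $A$ (the right side by $\Gamma^{lf}=A$, Lemma \ref{equal of loc fin elts}) and the map is the identity; if $\sigma\not\subseteq\tau'$ then $st({\bf o}'')\subsetneq st({\bf o})$, and since $st({\bf o}'')$ is integral and contains the dense torus $T$, a nonzero section of $\mathcal{O}_{st({\bf o}'')}$ over $st({\bf o})$ would have dense, hence non-closed, support — impossible — so that side vanishes, while $(\mathcal{O}_{st({\bf o}')})_{\bf o}=0$ as well. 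Either way $\beta_{\mathcal{P}}$ is an isomorphism, and hence so is $\beta$.

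The step I expect to be the main obstacle is the surjectivity of $\beta$: one cannot lift a section from the orbit ${\bf o}$ up to the star $st({\bf o})$ by hand, since the obstruction a priori sits in $H^1(st({\bf o}),\mathcal{M}_V)$; the device that dissolves it is to route the comparison through the exact equivalence $\gamma$ on the one-orbit category, thereby reducing everything to the generating objects $\mathcal{O}_{st({\bf o}_k)}(m_k)$ where $\beta_{\mathcal{P}}$ can be computed directly. Within that computation, the delicate point is the bookkeeping of which summands of $\mathcal{P}$ actually contribute global sections over $st({\bf o})$ (precisely those with $\sigma\subseteq\tau_k$) versus over ${\bf o}$ — but, as indicated, the two tallies coincide.
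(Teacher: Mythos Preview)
Your argument is correct and follows essentially the same route as the paper. The paper also observes that $\Gamma(st({\bf o}),\mathcal{M}_V)=0$ (you prove the weaker $\Gamma(st({\bf o}),\mathcal{M}_V)^{lf}=0$, which is all that is needed and which you justify carefully via the $T$-invariant support argument), then chooses a surjection $\mathcal{P}=\bigoplus\mathcal{O}_{st({\bf o}')}(m)\twoheadrightarrow\mathcal{M}$, forms the same commutative square, and concludes by noting that $\beta_{\mathcal{P}}$ is an isomorphism (the paper cites part~(1) of Proposition~\ref{main prop one orbit}) and that the bottom map is surjective (the paper cites part~(5) of Proposition~\ref{main prop one orbit}, whereas you cite the exactness of the equivalence $\gamma$ from Corollary~\ref{equiv of cat for one orbit}, which is derived from the same proposition). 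Your explicit case analysis on the summands $\mathcal{O}_{st({\bf o}')}$ according to whether $\sigma\subset\tau'$ is a perfectly good substitute for the paper's citation and makes the computation self-contained.
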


\begin{proof} Since $\Gamma (st({\bf o}),\mathcal{M}_V)=0$ it suffices to show that $\beta $ is surjective. Choose a surjection $\mathcal{P}:=\bigoplus _{{\bf o}',m}\mathcal{O}_{st({\bf o}')}(m)\to \mathcal{M}$. This gives the commutative diagram of equivariant sheaves
\[
\begin{CD}
\mathcal{P}_{st({\bf o})} @>>> \mathcal{M}_{st({\bf o})}\\
@VVV  @VVV \\
\mathcal{P}_{\bf o} @>>> \mathcal{M}_{\bf o}
\end{CD}
\]
and the corresponding commutative diagram of locally finite $T$-modules
\[
\begin{CD}
\Gamma (st ({\bf o}), \mathcal{P}_{st({\bf o})})^{lf} @>>> \Gamma (st({\bf o}), \mathcal{M}_{st({\bf o})})^{lf}\\
@VV\alpha V  @VV\beta V \\
\Gamma ({\bf o},\mathcal{P}_{\bf o})^{lf} @>\gamma >> \Gamma ({\bf o}, \mathcal{M}_{\bf o})^{lf}
\end{CD}
\]
Then the map $\alpha$ is an isomorphism and $\gamma $ is surjective by parts (1) and (5) of Proposition \ref{main prop one orbit} respectively.
Hence $\beta$ is also surjective.
\end{proof}

\begin{corollary} \label{last cor before equiv} (1) For any orbit ${\bf o}\subset X$ the functor $\Gamma (st({\bf o}),-)^{lf}$ is an exact functor
$$\Gamma (st({\bf o}),-)^{lf}:\mathcal{O}_{X,T}\text{-mod}\to A(st({\bf o}))\text{-mod}$$

(2) The sheaves $\{\mathcal{O}_{st({\bf o})}(m)\}_{{\bf o},m}$ are projective objects in $\mathcal{O}_{X,T}\text{-mod}$.

(3) Any object $\mathcal{M}\in \mathcal{O}_{X,T}\text{-mod}$ has a finite left (projective) resolution
$$\cdots \to \mathcal{P}_{-1}\to \mathcal{P}_0\to \mathcal{M}\to 0$$
where each $\mathcal{P}_{i}$ is a finite sum of sheaves in $\{\mathcal{O}_{st({\bf o})}(m)\}_{{\bf o},m}$.
\end{corollary}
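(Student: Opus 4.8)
The plan is to dispose of (1) and (2) quickly using the machinery already set up, and then to prove (3) by first showing that $\mathcal{O}_{X,T}\text{-mod}$ has finite global dimension. For (1), I would factor the functor $\Gamma(st({\bf o}),-)^{lf}$: applying Lemma~\ref{gl sect on st and o} to $0\to\mathcal{M}_V\to\mathcal{M}_{st({\bf o})}\to\mathcal{M}_{\bf o}\to 0$ and using $\Gamma(st({\bf o}),\mathcal{M}_{st({\bf o})})=\Gamma(st({\bf o}),\mathcal{M})$, the natural map is a functorial isomorphism $\Gamma(st({\bf o}),\mathcal{M})^{lf}\xrightarrow{\ \sim\ }\Gamma({\bf o},\mathcal{M}_{\bf o})^{lf}$. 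Hence $\Gamma(st({\bf o}),-)^{lf}$ is the composite of the exact functor $\mathcal{M}\mapsto\mathcal{M}_{\bf o}$ (into $(\mathcal{O}_{X,T}\text{-mod})_{\bf o}$) with the exact equivalence $(\mathcal{O}_{X,T}\text{-mod})_{\bf o}\simeq A(st({\bf o}))\text{-mod}$ of Corollary~\ref{equiv of cat for one orbit}; in particular it is exact and lands in finitely generated graded modules. For (2), the adjunction $j_!\dashv j^{-1}$ for $j:st({\bf o})\hookrightarrow X$ identifies $\mathrm{Hom}_{\mathcal{O}_{X,T}}(\mathcal{O}_{st({\bf o})}(m),\mathcal{N})$ with the set of $T$-eigensections of $\mathcal{N}$ over $st({\bf o})$ of a fixed weight, i.e.\ with a fixed homogeneous component of $\Gamma(st({\bf o}),\mathcal{N})^{lf}$; by (1) this is exact in $\mathcal{N}$, so $\mathcal{O}_{st({\bf o})}(m)$ is projective.

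For (3), the crux is a uniform bound on projective dimensions. I would first bound $\mathrm{pd}(\mathcal{O}_{X\vert{\bf o}_\sigma})$. The short exact sequence $0\to\mathcal{O}_V\to\mathcal{O}_{st({\bf o}_\sigma)}\to\mathcal{O}_{X\vert{\bf o}_\sigma}\to 0$ with $V=st({\bf o}_\sigma)\setminus{\bf o}_\sigma$ (open in $X$, since ${\bf o}_\sigma$ is closed in $st({\bf o}_\sigma)$) and $\mathcal{O}_{st({\bf o}_\sigma)}$ projective reduces this to resolving $\mathcal{O}_V$. Since $X$ is smooth, $\sigma$ is simplicial with exactly $\dim(\sigma)$ facets $\tau_1,\dots,\tau_{\dim\sigma}$; the $st({\bf o}_{\tau_j})$ cover $V$, and each multi-intersection $\bigcap_{j\in J}st({\bf o}_{\tau_j})=st({\bf o}_{\tau_J})$ is the star of the orbit of a face $\tau_J\subseteq\sigma$. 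The associated cellular (\v{C}ech-type) complex $\cdots\to\bigoplus_{|J|=2}\mathcal{O}_{st({\bf o}_{\tau_J})}\to\bigoplus_{|J|=1}\mathcal{O}_{st({\bf o}_{\tau_J})}\to\mathcal{O}_V\to 0$ is then a resolution of $\mathcal{O}_V$ (on stalks it is the augmented chain complex of a simplex) of length $\dim(\sigma)-1$ by finite sums of projective generators, so splicing gives $\mathrm{pd}(\mathcal{O}_{X\vert{\bf o}_\sigma})\le\dim(\sigma)$.

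Since $A(st({\bf o}_\sigma))\cong\bbC[x_1^{\pm},\dots,x_d^{\pm}]\otimes\bbC[y_1,\dots,y_{n-d}]$ has finite global dimension, Corollary~\ref{equiv of cat for one orbit} shows every object of $(\mathcal{O}_{X,T}\text{-mod})_{{\bf o}_\sigma}$ has a finite resolution by finite sums of $\mathcal{O}_{X\vert{\bf o}_\sigma}(m)$, hence finite projective dimension in $\mathcal{O}_{X,T}\text{-mod}$. Ordering the orbits of $X$ by non-decreasing dimension of the corresponding cone makes each partial union $U_k$ of the first $k$ orbits open, so a given $\mathcal{M}\in\mathcal{O}_{X,T}\text{-mod}$ is a finite iterated extension of the sheaves $\mathcal{M}_{{\bf o}^{(k)}}\in(\mathcal{O}_{X,T}\text{-mod})_{{\bf o}^{(k)}}$; as the projective dimension of an extension is at most the maximum of the two, $\mathrm{pd}(\mathcal{M})$ is finite, bounded uniformly in $\mathcal{M}$. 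Now build $\cdots\to\mathcal{P}_1\to\mathcal{P}_0\to\mathcal{M}\to 0$ with each $\mathcal{P}_i$ a finite sum of sheaves $\mathcal{O}_{st({\bf o})}(m)$, using that these are projective (by (2)), that $\mathcal{O}_{X,T}\text{-mod}$ is abelian (Proposition~\ref{prop on general abelian subcat}), and that the $i$-th syzygy stays in $\mathcal{O}_{X,T}\text{-mod}$. By finiteness of $\mathrm{pd}$ some syzygy $\mathcal{K}$ is projective, hence a direct summand of the finite sum of generators surjecting onto it; since each $\mathcal{O}_{st({\bf o})}(m)$ has endomorphism ring $\bbC$ (the degree-$0$ part of $A(st({\bf o}))$, by (2)) and so is indecomposable with local endomorphism ring, Krull--Schmidt forces $\mathcal{K}$ to be itself a finite sum of generators, and truncating there gives the desired resolution.

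I expect the main obstacle to lie inside (3): arranging the cellular/\v{C}ech resolution of $\mathcal{O}_V$ so that its terms are exactly the projective generators and it is exact --- which rests on the combinatorics of simplicial cones (all intersections of facets being faces, and their number being $\dim\sigma$) --- and the Krull--Schmidt step needed to bring the terminal syzygy into the prescribed form. Parts (1) and (2) are essentially immediate from Lemma~\ref{gl sect on st and o}, Corollary~\ref{equiv of cat for one orbit}, and the open-inclusion adjunction.
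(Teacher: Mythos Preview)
Your arguments for (1) and (2) match the paper's: reduce $\Gamma(st({\bf o}),-)^{lf}$ to $\Gamma({\bf o},(-)_{\bf o})^{lf}$ via Lemma~\ref{gl sect on st and o}, then use the equivalence of Corollary~\ref{equiv of cat for one orbit} (the paper instead rechecks surjectivity directly from parts (1) and (5) of Proposition~\ref{main prop one orbit}, but this is the same content); for (2) both use $\mathrm{Hom}(\mathcal{O}_{st({\bf o})}(m),-)\cong\Gamma(st({\bf o}),-)^m$.

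For (3) the approaches diverge. The paper does not give a self-contained argument: it notes that a possibly infinite resolution exists by definition of $\mathcal{O}_{X,T}\text{-mod}$ together with Proposition~\ref{prop on general abelian subcat}, and for finiteness simply points to the analogous inductive proof of Lemma~\ref{lemma on proj res in a-mod} or, alternatively, forward-references the equivalence $\theta:\mathcal{O}_{X,T}\text{-mod}\simeq\mathcal{A}_\Sigma\text{-mod}$ of Proposition~\ref{prop on comb equiv} (under which the generators match and finite resolutions on the combinatorial side are already known). Your route is more explicit and stays on the geometric side: you bound $\mathrm{pd}(\mathcal{O}_{X\vert{\bf o}_\sigma})$ via the \v{C}ech-type resolution of $\mathcal{O}_V$ by stars of facets of the simplicial cone $\sigma$, propagate this through $(\mathcal{O}_{X,T}\text{-mod})_{{\bf o}}$ using finite global dimension of $A(st({\bf o}))$, filter an arbitrary $\mathcal{M}$ by its restrictions to orbits, and finish with a Krull--Schmidt step (valid since $\mathrm{End}(\mathcal{O}_{st({\bf o})}(m))=A(st({\bf o}))^0=\bbC$). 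This buys you an intrinsic proof with an explicit bound on projective dimension and avoids the forward reference; the paper's version is shorter but leans on the combinatorial description it is about to establish.
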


\begin{proof} (1) Consider the functor
$$\mathcal{O}_{X,T}\text{-mod}\to (\mathcal{O}_{X,T}\text{-mod})_{\bf o},\quad \quad
\mathcal{M}\mapsto \mathcal{M}_{\bf o}$$
By Lemma \ref{gl sect on st and o} the functors $\Gamma (st({\bf o}),\mathcal{M})^{lf}$ and $\Gamma ({\bf o},\mathcal{M}_{\bf o})^{lf}$ are isomorphic on the category $\mathcal{O}_{X,T}\text{-mod}$. So it remains to prove the following statement: let $\mathcal{M}\to \mathcal{M}'$ be a surjection in $\mathcal{O}_{X,T}\text{-mod}$. Then the induced map
$$\Gamma ({\bf o},\mathcal{M}_{\bf o})^{lf}\stackrel{\mu}{\to }\Gamma ({\bf o},\mathcal{M}'_{\bf o})^{lf}$$
is a surjection of finitely generated graded $A(st({\bf o}))$-modules.

Choose a surjective map $\mathcal{P}=\bigoplus \mathcal{O}_{st({\bf o})}(m)\to \mathcal{M}$. By parts (1) and (5) of Proposition \ref{main prop one orbit} the induced maps
$$\Gamma ({\bf o},\mathcal{P}_{\bf o})^{lf}\to \Gamma ({\bf o},\mathcal{M}_{\bf o})^{lf}\quad \text{and}\quad \Gamma ({\bf o},\mathcal{P}_{\bf o})^{lf}\to \Gamma ({\bf o},\mathcal{M}'_{\bf o})^{lf}$$
are surjections of finitely generated graded $A(st({\bf o}))$-modules. Hence also $\mu$ is such.

(2) The functors $Hom (\mathcal{O}_{st({\bf o})}(m),(-))$ and $\Gamma (st({\bf o}),(-))^m$ are isomorphic on $\mathcal{O}_{X,T}\text{-mod}$. The second functor is exact by part (1), hence the object $\mathcal{O}_{st({\bf o})}(m)$ is projective.

(3) Let $\mathcal{M}\in \mathcal{O}_{X,T}\text{-mod}$. The existence of a possibly infinite resolution
$$\cdots \to \mathcal{P}_{-1}\to \mathcal{P}_0\to \mathcal{M}\to 0$$
where each $\mathcal{P}_{i}$ is a finite sum of sheaves in $\{\mathcal{O}_{st({\bf o})}(m)\}_{{\bf o},m}$ follows from the definition of the category $\mathcal{O}_{X,T}\text{-mod}$ and the fact that it is abelian. To show that one can choose a finite resolution we can argue as in the proof Lemma \ref{lemma on proj res in a-mod} or else use Proposition \ref{prop on comb equiv} below.

\end{proof}

\subsection{The functor $D^b(coh_{X,T})\to D^b(\mathcal{O}_{X,T}\text{-mod})$ is full and faithful}

\begin{prop} \label{prop on der eq} The embedding of abelian categories
$coh_{X,T}\to \mathcal{O}_{X,T}\text{-mod}$ induces the fully faithful
functor $D^b(coh_{X,T})\to D^b(\mathcal{O}_{X,T}\text{-mod})$.
\end{prop}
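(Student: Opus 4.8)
The plan is to use the standard criterion for when the derived functor between bounded derived categories of an inclusion of abelian categories $\mathcal{C}\hookrightarrow \mathcal{C}'$ is fully faithful: it suffices to check that for all $X,Y\in\mathcal{C}$ the natural maps $\operatorname{Ext}^i_{\mathcal{C}}(X,Y)\to\operatorname{Ext}^i_{\mathcal{C}'}(X,Y)$ are isomorphisms for all $i\ge 0$. Equivalently (and this is the version I would actually verify), it is enough to produce, for every object of $coh_{X,T}$, a finite left resolution inside $coh_{X,T}$ by objects that are \emph{adapted} to computing $\operatorname{Ext}$-groups in the bigger category $\mathcal{O}_{X,T}\text{-mod}$ — i.e. acyclic for $\operatorname{Hom}_{\mathcal{O}_{X,T}\text{-mod}}(-,\mathcal{F})$ for every $\mathcal{F}\in\mathcal{O}_{X,T}\text{-mod}$ — and such that $\operatorname{Hom}$ computed in the two categories agrees on these resolutions.

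First I would observe that every $T$-equivariant coherent sheaf on $X$ admits a finite left resolution in $coh_{X,T}$ by finite direct sums of twisted structure sheaves $\bigoplus\mathcal{O}_{st({\bf o})}(m)$: since $X$ is smooth, each $st({\bf o}_\sigma)$ is the open affine $\cong{\bf o}_\sigma\times\mathbb{A}^{\dim T_\sigma}$, so over it $T$-equivariant coherent sheaves correspond to finitely generated graded $A(st({\bf o}))$-modules, which have finite free resolutions; one then induces on the number of orbits using the exact sequences $0\to\mathcal{M}_V\to\mathcal{M}_{st({\bf o})}\to\mathcal{M}_{\bf o}\to 0$ exactly as in the proof of Proposition~\ref{prop on general abelian subcat} and Corollary~\ref{last cor before equiv}(3). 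Crucially, these same sheaves $\mathcal{O}_{st({\bf o})}(m)$ are \emph{projective} in the larger category $\mathcal{O}_{X,T}\text{-mod}$ by Corollary~\ref{last cor before equiv}(2). Hence a resolution of $\mathcal{F}\in coh_{X,T}$ by such objects is simultaneously a resolution valid for computing $\operatorname{RHom}_{\mathcal{O}_{X,T}\text{-mod}}(\mathcal{F},-)$.

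Next I would reduce the claim to the identity
\[
\operatorname{Hom}_{coh_{X,T}}\bigl(\mathcal{O}_{st({\bf o})}(m),\mathcal{G}\bigr)\;=\;\operatorname{Hom}_{\mathcal{O}_{X,T}\text{-mod}}\bigl(\mathcal{O}_{st({\bf o})}(m),\mathcal{G}\bigr)
\]
for $\mathcal{G}\in coh_{X,T}$, together with the analogous identity on all of $\mathcal{O}_{X,T}\text{-mod}$ used to splice the two $\operatorname{Hom}$-complexes. By the adjunction/restriction discussion in Section~\ref{sect on def of twisting} and in Corollary~\ref{last cor before equiv}(2), $\operatorname{Hom}_{\mathcal{O}_{X,T}\text{-mod}}(\mathcal{O}_{st({\bf o})}(m),\mathcal{G})$ is the degree-$m$ graded piece of the locally finite sections $\Gamma(st({\bf o}),\mathcal{G})^{lf}$, hence equals $\operatorname{Hom}_{coh_{st({\bf o}),T}}(\mathcal{O}_{st({\bf o})}(m),\mathcal{G}|_{st({\bf o})})=\operatorname{Hom}_{coh_{X,T}}(\mathcal{O}_{st({\bf o})}(m),\mathcal{G})$ by open restriction; the first equality uses that for coherent $\mathcal{G}$ the module $\Gamma(st({\bf o}),\mathcal{G})$ is already locally finite, so the superscript $lf$ is harmless. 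Feeding the projective (in $\mathcal{O}_{X,T}\text{-mod}$) resolution $\mathcal{P}_\bullet\to\mathcal{F}$ into both sides, the complex $\operatorname{Hom}_{\mathcal{O}_{X,T}\text{-mod}}(\mathcal{P}_\bullet,\mathcal{G})$ computes $\operatorname{Ext}^\bullet_{\mathcal{O}_{X,T}\text{-mod}}(\mathcal{F},\mathcal{G})$, while it is termwise equal to $\operatorname{Hom}_{coh_{X,T}}(\mathcal{P}_\bullet,\mathcal{G})$ which computes $\operatorname{Ext}^\bullet_{coh_{X,T}}(\mathcal{F},\mathcal{G})$ (the $\mathcal{P}_i$ being locally free equivariant sheaves, hence $\operatorname{Hom}(-, \mathcal{G})$-acyclic in $coh_{X,T}$ as well). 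Thus the $\operatorname{Ext}$-comparison maps are isomorphisms in every degree, and the induced functor on bounded derived categories is fully faithful.

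The main obstacle I anticipate is the bookkeeping in the comparison of $\operatorname{Hom}$-groups: one must be careful that passing from $\Gamma(st({\bf o}),-)$ to $\Gamma(st({\bf o}),-)^{lf}$ does not change the answer when the target is merely in $\mathcal{O}_{X,T}\text{-mod}$ rather than coherent, and that the resolution $\mathcal{P}_\bullet$ — which exists and is finite inside $coh_{X,T}$ — really can be chosen \emph{inside the smaller category}, not only in $\mathcal{O}_{X,T}\text{-mod}$. For the latter one invokes that $coh_{X,T}$ over the affine smooth pieces corresponds to finitely generated graded modules over polynomial-type rings $A(st({\bf o}))$, which have finite free resolutions by finitely generated frees; gluing these across orbits via the exact sequences above, and using that $coh_{X,T}$ is closed under kernels of surjections of coherent sheaves, produces the desired resolution. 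Everything else is a formal consequence of Corollary~\ref{last cor before equiv} and the exactness of $\Gamma(st({\bf o}),-)^{lf}$.
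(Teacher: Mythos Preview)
There is a genuine gap. The sheaves $\mathcal{O}_{st({\bf o})}(m)$ are by definition extensions by zero $j_!j^{-1}\mathcal{O}_X(m)$ from the open subset $st({\bf o})\subset X$ (Section~\ref{basics on restr of sheaves}); unless $st({\bf o})=X$ they are \emph{not} quasi-coherent, hence not objects of $coh_{X,T}$. So the resolution $\mathcal{P}_\bullet\to\mathcal{F}$ you build lives only in the larger category $\mathcal{O}_{X,T}\text{-mod}$, and cannot be used to compute $Ext^\bullet_{coh_{X,T}}(\mathcal{F},\mathcal{G})$ as you do in the last step. Your closing claim that ``the $\mathcal{P}_i$ [are] locally free equivariant sheaves, hence $Hom(-,\mathcal{G})$-acyclic in $coh_{X,T}$'' fails for the same reason: the $\mathcal{P}_i$ are not in $coh_{X,T}$ at all, let alone locally free on $X$. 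The ``gluing'' you sketch in the final paragraph does not repair this, because the short exact sequences $0\to\mathcal{M}_V\to\mathcal{M}_{st({\bf o})}\to\mathcal{M}_{\bf o}\to 0$ already leave the coherent world.

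The paper closes the gap differently. It also takes the projective resolution $\mathcal{P}_\bullet\to F$ in $\mathcal{O}_{X,T}\text{-mod}$ (after reducing to $F,G$ locally free), so that $H^i(Hom(\mathcal{P}_\bullet,G)^T)$ computes $Ext^i_{\mathcal{O}_{X,T}\text{-mod}}(F,G)$. But to identify this with $Ext^i_{coh_{X,T}}(F,G)$ it passes through $Qcoh_{X,T}$: the complex of \emph{sheaf}-Homs $\mathcal{H}om_{\mathcal{O}_X}(\mathcal{P}_\bullet,G)$ is an exact resolution of $\mathcal{H}om_{\mathcal{O}_X}(F,G)$ (stalks of $\mathcal{P}_\bullet$ are free), its terms are direct sums of $j_*G|_{st({\bf o})}(-m)\in Qcoh_{X,T}$, and these are $\Gamma(X,-)^T$-acyclic since each $st({\bf o})$ is affine. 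Hence the same complex computes ${\bf R}^i\Gamma(X,\mathcal{H}om_{\mathcal{O}_X}(F,G))^T=Ext^i_{Qcoh_{X,T}}(F,G)$, which agrees with $Ext^i_{coh_{X,T}}(F,G)$ by the separately proved full faithfulness of $D^b(coh_{X,T})\hookrightarrow D^b(Qcoh_{X,T})$. The missing ingredient in your argument is precisely this detour through quasi-coherent sheaves (or some substitute for it).
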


\begin{proof} Since $X$ is smooth, every object in $coh_{X,T}$ is quasi-isomorphic to a finite complex of equivariant vector bundles, i.e. locally free equivariant $\mathcal{O}_{X}$-modules of finite rank. Let $F$ and $G$ be equivariant vector bundles on $X$. It suffices to prove that the natural map
\begin{equation}\label{map of exts} Ext ^\bullet _{coh_{X,T}}(F,G)\to Ext ^\bullet _{\mathcal{O}_{X,T}\text{-mod}}(F,G)
\end{equation}
is an isomorphism.

To compute the right hand side of \eqref{map of exts} choose a  projective resolution
\begin{equation}\label{resol to prove eq} \cdots \to \mathcal{P}_{-1} \to \mathcal{P}_0 \to F\to 0
\end{equation}
as in part (3) of Corollary \ref{last cor before equiv}. That is, the equivariant sheaves $\mathcal{P}_{i}$ consist of finite direct sums of sheaves $\{\mathcal{O}_{st({\bf o})}(m)\}_{{\bf o},m}$. Then
$$Ext ^i _{\mathcal{O}_{X,T}\text{-mod}}(F,G)=H^i (Hom  _{\mathcal{O}_X} (\mathcal{P}_\bullet ,G)^T)=H^i (\Gamma (X,\mathcal{H}om _{\mathcal{O}_X}(\mathcal{P}_\bullet ,G))^T)$$
On the other hand notice that the complex of sheaves of $\mathcal{O}_X$-modules
\begin{equation}0\to \mathcal{H}om _{\mathcal{O}_X}(F,G)\to
\mathcal{H}om _{\mathcal{O}_X}(\mathcal{P}_0,G)\to \cdots
\end{equation}
is exact (because the stalks of all sheaves in the complex \eqref{resol to prove eq} are free modules). Also notice that the sheaves
$\mathcal{H}om _{\mathcal{O}_X}(\mathcal{P}_i,G)$ are finite direct sums
$$\mathcal{H}om _{\mathcal{O}_X}(\mathcal{O}_{st({\bf o})}(m),G)=j_*G_{st({\bf o})}(-m)$$
where $j:st ({\bf o})\hookrightarrow X$ is the open embedding.
It follows that the sheaves $\mathcal{H}om _{\mathcal{O}_X}(\mathcal{P}_i,G)$ are quasi-coherent equivariant sheaves which are acyclic for the functor $\Gamma (X,-)^T$ (because the open subsets $st({\bf o})\subset X$ are affine). By Lemma \ref{der of coh is full and faith in der of qcoh} we have
$$Ext ^i _{coh_{X,T}}(F,G)=Ext ^i _{Qcoh_{X,T}}(F,G)$$
Hence
$$Ext ^i _{coh_{X,T}}(F,G)={\bf R}^i\Gamma (X,\mathcal{H}om _{\mathcal{O}_X}(F,G))^T=H^i (\Gamma (X ,\mathcal{H}om _{\mathcal{O}_X}(\mathcal{P}_\bullet,G))^T)$$
which proves the proposition.
\end{proof}

\begin{lemma}\label{der of coh is full and faith in der of qcoh}
The natural functor $D^b(coh_{X,T})\rightarrow D^b(Qcoh_{X,T})$ is full and faithful.
\end{lemma}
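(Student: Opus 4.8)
This is an instance of a standard principle in homological algebra, which I will recall and then apply. Suppose $\mathcal B$ is an abelian category and $\mathcal A\subset\mathcal B$ is a full abelian subcategory which is a \emph{Serre subcategory} (closed in $\mathcal B$ under subobjects, quotients and extensions) with the additional property that every object of $\mathcal B$ is the filtered union of its subobjects lying in $\mathcal A$. Then the natural functor $D^b(\mathcal A)\to D^b(\mathcal B)$ is fully faithful; more precisely it identifies $D^b(\mathcal A)$ with the full triangulated subcategory $D^b_{\mathcal A}(\mathcal B)\subset D^b(\mathcal B)$ of complexes whose cohomology objects lie in $\mathcal A$. The plan is to recall why this holds and then to verify the two hypotheses for $\mathcal A=coh_{X,T}\subset\mathcal B=Qcoh_{X,T}$; note that smoothness of $X$ plays no role here, only that $X$ is noetherian.

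For the abstract statement: using the truncation triangles in $D^b(\mathcal A)$ together with the five lemma applied to the long exact $Hom$-sequences, full faithfulness of $D^b(\mathcal A)\to D^b(\mathcal B)$ is reduced to showing that for all $F,G\in\mathcal A$ the maps $Ext^i_{\mathcal A}(F,G)\to Ext^i_{\mathcal B}(F,G)$ are isomorphisms. In the Yoneda description this says that every $i$-fold extension of $F$ by $G$ in $\mathcal B$ is equivalent to one all of whose terms lie in $\mathcal A$, and that two such extensions becoming Yoneda-equivalent in $\mathcal B$ are already equivalent in $\mathcal A$. Both follow from the usual trimming argument: given an exact sequence $0\to G\to E_{i-1}\to\cdots\to E_0\to F\to 0$ in $\mathcal B$, one replaces $E_0$ by a subobject $E_0'\subset E_0$ lying in $\mathcal A$ which still surjects onto $F$ — such $E_0'$ exists because $F\in\mathcal A$, the images in $F$ of the subobjects of $E_0$ lying in $\mathcal A$ form a filtered family of subobjects of $F$ with union $F$, and $F$ is a noetherian object, so this family stabilizes at $F$ — then replaces $E_1$ by the fiber product $E_1\times_{E_0}E_0'$, and iterates; after $i$ steps every term lies in $\mathcal A$, the last one because $\mathcal A$ is closed under extensions. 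Injectivity is handled by the same device applied to a map between two extensions.

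It remains to check the two hypotheses for $coh_{X,T}\subset Qcoh_{X,T}$. The Serre property is local on $X$: over each affine $T$-invariant chart $st({\bf o}_\sigma)$ the categories of equivariant quasi-coherent, resp.\ coherent, sheaves are identified (Section \ref{sect on def of twisting}) with $M$-graded, resp.\ finitely generated $M$-graded, modules over the noetherian $\bbC$-algebra $A(st({\bf o}_\sigma))$, and there the finitely generated graded modules are closed under graded submodules, quotients and extensions inside all graded modules. For the filtered-union property: again locally, over each chart the graded module corresponding to $\mathcal F\in Qcoh_{X,T}$ is the filtered union of its finitely generated graded submodules; since the finitely many charts $st({\bf o}_\sigma)$, $\sigma\in\Sigma$, cover $X$, one assembles these into a single filtered system of equivariant coherent subsheaves of $\mathcal F$ with union $\mathcal F$ (to a finite collection of sections of $\mathcal F$ over the various charts one associates the equivariant coherent subsheaf they generate). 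Since every object of $D^b(coh_{X,T})$ has cohomology in $coh_{X,T}$ and so lands in $D^b_{coh_{X,T}}(Qcoh_{X,T})$, the abstract statement yields the lemma. I expect the main obstacle to be the careful bookkeeping in the trimming argument for the $Ext$-comparison, together with making the $T$-equivariant gluing in the filtered-union step precise; the remaining verifications (Serre-ness, and the statement about graded modules over $A(st({\bf o}_\sigma))$) are routine.
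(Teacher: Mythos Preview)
Your proof is correct and follows the same overall strategy as the paper: both reduce to the standard principle that $D^b(\mathcal A)\to D^b(\mathcal B)$ is fully faithful once every object of $\mathcal B$ is a filtered union of subobjects from $\mathcal A$, and then verify this filtered-union property for $coh_{X,T}\subset Qcoh_{X,T}$.

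The only noteworthy difference is in how that verification is carried out. You work locally via the equivalence with graded modules over $A(st({\bf o}_\sigma))$ and then glue finitely many homogeneous sections into a global equivariant coherent subsheaf. The paper instead starts from an ordinary (non-equivariant) coherent subsheaf $\mathcal F_i\subset\mathcal F$ and forms the $\mathcal O_X$-submodule $\mathcal H$ generated by all translates $\{t_*\mathcal F_i\}_{t\in T}$; this is automatically $T$-stable, and its coherence follows because on any $T$-invariant affine the $T$-action on sections is locally finite, so the translates of a finite generating set span a finite-dimensional space. The paper's argument is slightly slicker in that it avoids any gluing and directly exploits the local finiteness of the $T$-action; your approach is more in the spirit of the ambient combinatorial description via graded modules. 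Both are equally valid.
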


\begin{proof} The fact that the non-equivariant functor
$D^b(coh_{X})\rightarrow D^b(Qcoh_{X})$ is full and faithful is well known and follows from the fact that every quasi-coherent sheaf is a union of its coherent subsheaves.

In the equivariant situation the same holds: let $\mathcal{F}\in Qcoh_{X,T}$ be a union of its (non-equivariant) subsheaves $\{\mathcal{F}_i\}$. Fix one $\mathcal{F}_i$ and let $\mathcal{H}\subset \mathcal{F}$ be an $\mathcal{O}_X$-submodule of $\mathcal{F}$ generated by the shifts $\{t_*\mathcal{F}_i\}\vert _{t\in T}$. Clearly $\mathcal{H}$ is a $T$-equivariant quasi-coherent subsheaf of $\mathcal{F}$. It remains to show that $\mathcal{H}$ is coherent. Let $U\subset X$ be a $T$-invariant open subset. The $A(U)$-module $\mathcal{F}_i(U)$ is generated by a finite set of sections $\{s_j\}\subset  \mathcal{F}_i(U)$. Then the $A(U)$-module $\mathcal{H}(U)$ is generated by the shifts $\{t_*s_j\}\vert _{t\in T}$, which are contained in a finite dimensional vector space, because the $T$-action on $\mathcal{F}(U)$ is locally finite.
\end{proof}

\subsection{Combinatorial description of the category $\mathcal{O}_{X,T}\text{-mod}$}


Our goal is to extend the description of the category
$(\mathcal{O}_{X,T}\text{-mod})_{\bf o}$ in Corollary \ref{equiv of cat for one orbit} to that of $\mathcal{O}_{X,T}\text{-mod}$.

\subsubsection{}
Recall the topological space $\Sigma \simeq \overline{X}:=X/T$, and $q:X\to \overline{X}$ - the quotient map. Then the sheaf of graded algebras $\mathcal{A}_\Sigma$ is the direct image $q_*\mathcal{O}_X$
(Section \ref{sect on sheaf A_sigma}).

Recall that for any orbit ${\bf o}\subset X$ and any $\mathcal{M}\in \mathcal{O}_{X,T}\text{-mod}$ the space $\Gamma (st({\bf o}),\mathcal{M})^{lf}$ is a finitely generated graded $A(st({\bf o}))$-module (Corollary \ref{last cor before equiv}). This implies that we have the functor
\begin{equation} \label{defn of the functor theta} \theta :\mathcal{O}_{X,T}\text{-mod}\to
\mathcal{A}_\Sigma \text{-mod},
\quad \text{where}\quad \theta (\mathcal{M})_\sigma =\Gamma (st({\bf o}_\sigma ),\mathcal{M})^{lf}
\end{equation}

\begin{prop} \label{prop on comb equiv} The functor $\theta :\mathcal{O}_{X,T}\text{-mod}\to
\mathcal{A}_\Sigma \text{-mod}$ is an equivalence of abelian categories. It induces the equivalence of full subcategories
$\theta :coh _{X,T}\to \mathcal{A}_{\Sigma }\text{-coh}$
\end{prop}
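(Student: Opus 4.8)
The plan is to follow the pattern of the proof of Proposition~\ref{equiv of cat of modules}, exploiting that both categories have small projective generators and that $\theta$ is exact. First I would record that $\theta$ is exact: exactness can be tested stalkwise, and $\theta(\mathcal{M})_\sigma=\Gamma(st({\bf o}_\sigma),\mathcal{M})^{lf}$ is exact in $\mathcal{M}$ by Corollary~\ref{last cor before equiv}(1). The crucial input is the computation of $\theta$ on the projective generators $\mathcal{O}_{st({\bf o}_\sigma)}(m)$ of $\mathcal{O}_{X,T}\text{-mod}$ (Corollary~\ref{last cor before equiv}(2),(3)):
$$\theta\bigl(\mathcal{O}_{st({\bf o}_\sigma)}(m)\bigr)\cong\mathcal{A}_\Sigma(m)_{[\sigma]}.$$
Since the $M$-action commutes with $\theta$ one may take $m=0$. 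For a cone $\tau$ the open subset $st({\bf o}_\tau)\cap st({\bf o}_\sigma)\subset X$ corresponds to $[\tau]\cap[\sigma]=[\tau\cap\sigma]$, hence equals $st({\bf o}_{\tau\cap\sigma})$. By the "sections with closed support" description of extension by zero from an open subset (Section~\ref{basics on restr of sheaves}), $\Gamma\bigl(st({\bf o}_\tau),\mathcal{O}_{st({\bf o}_\sigma)}\bigr)$ is the space of regular functions on $st({\bf o}_{\tau\cap\sigma})$ whose support is closed in $st({\bf o}_\tau)$; this is all of $A(st({\bf o}_\tau))$ when $\tau\subset\sigma$, and is $0$ otherwise, since in that case $st({\bf o}_{\tau\cap\sigma})$ is a proper dense open subset of the irreducible variety $st({\bf o}_\tau)$, so a nonzero regular function on it has support that is not closed in $st({\bf o}_\tau)$. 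Taking $T$-finite vectors changes nothing here, and the restriction maps are the evident ones, which identifies $\theta(\mathcal{O}_{st({\bf o}_\sigma)})$ with $(\mathcal{A}_\Sigma)_{[\sigma]}$.

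For full faithfulness I would use that $Hom\bigl(\mathcal{O}_{st({\bf o}_\sigma)}(m),-\bigr)\cong\Gamma(st({\bf o}_\sigma),-)^m$ on $\mathcal{O}_{X,T}\text{-mod}$ (recorded in the proof of Corollary~\ref{last cor before equiv}(2)) and that $Hom\bigl(\mathcal{A}_\Sigma(m)_{[\sigma]},-\bigr)\cong(-)_\sigma^m$ on $\mathcal{A}_\Sigma\text{-mod}$ (Lemma~\ref{lemma on proj res in a-mod}(1)); together with the previous step, these give, naturally in $\mathcal{M}$, an isomorphism $Hom(\mathcal{O}_{st({\bf o}_\sigma)}(m),\mathcal{M})\cong Hom(\theta\mathcal{O}_{st({\bf o}_\sigma)}(m),\theta\mathcal{M})$ realized by $\theta$. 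For arbitrary $\mathcal{M},\mathcal{N}$ I would then resolve $\mathcal{M}$ by a presentation $\mathcal{P}_1\to\mathcal{P}_0\to\mathcal{M}\to 0$ with $\mathcal{P}_i$ finite sums of these generators (Corollary~\ref{last cor before equiv}(3)), apply the right exact $\theta$ to get a presentation of $\theta\mathcal{M}$ by finite sums of the $\mathcal{A}_\Sigma(m)_{[\sigma]}$, and deduce the isomorphism on $Hom(\mathcal{M},\mathcal{N})$ from the five lemma applied to the two resulting left exact $Hom$-sequences. For essential surjectivity I would start from $G\in\mathcal{A}_\Sigma\text{-mod}$, present it as $Q_1\to Q_0\to G\to 0$ with $Q_i$ finite sums of $\mathcal{A}_\Sigma(m)_{[\sigma]}=\theta(\mathcal{O}_{st({\bf o}_\sigma)}(m))$ (Lemma~\ref{lemma on proj res in a-mod}(2)), lift $Q_1\to Q_0$ through $\theta$ using fullness, and put $\mathcal{M}:=\coker(\mathcal{P}_1\to\mathcal{P}_0)$; exactness of $\theta$ gives $\theta\mathcal{M}\cong\coker(Q_1\to Q_0)=G$.

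For the last assertion I would use the standard facts that $st({\bf o}_\sigma)=Spec\,\bbC[M\cap\sigma^\vee]$ is the affine chart $U_\sigma$ of $X$, that $\{st({\bf o}_\sigma)\}_{\sigma}$ is an affine open cover of $X$, and that $U_\tau\hookrightarrow U_\sigma$ is a principal open immersion for $\tau\subset\sigma$. If $\mathcal{F}\in coh_{X,T}$ then $\theta(\mathcal{F})_\sigma=\Gamma(U_\sigma,\mathcal{F})$ (the $T$-action on sections over an affine is already locally finite, Section~\ref{sect on def of twisting}), and the localization isomorphism $A(U_\tau)\otimes_{A(U_\sigma)}\Gamma(U_\sigma,\mathcal{F})\cong\Gamma(U_\tau,\mathcal{F})$ is exactly the coherence condition, so $\theta(\mathcal{F})\in\mathcal{A}_\Sigma\text{-coh}$. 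Conversely, an object $G\in\mathcal{A}_\Sigma\text{-coh}$ is precisely an equivariant coherent gluing datum over this cover — finitely generated graded modules $G_\sigma$ over $A(U_\sigma)$ with $G_\tau$ the localization of $G_\sigma$ for $\tau\subset\sigma$ — hence arises from a $T$-equivariant coherent sheaf $\mathcal{F}$ on $X$ with $\Gamma(U_\sigma,\mathcal{F})=G_\sigma$; since $coh_{X,T}\subset\mathcal{O}_{X,T}\text{-mod}$ (remark after Definition~\ref{defn of good subcat}) and $\theta(\mathcal{F})\cong G$, uniqueness of $\theta^{-1}(G)$ forces $\theta^{-1}(G)\cong\mathcal{F}\in coh_{X,T}$. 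Thus $\theta$ restricts to an equivalence $coh_{X,T}\to\mathcal{A}_\Sigma\text{-coh}$.

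The step I expect to be the main obstacle is the vanishing $\theta(\mathcal{O}_{st({\bf o}_\sigma)})_\tau=0$ for $\tau\not\subset\sigma$, which must be argued with some care from the closed-support characterization of extension by zero together with irreducibility of $st({\bf o}_\tau)$; a close second is the clean identification, in the final step, of $\mathcal{A}_\Sigma\text{-coh}$ with equivariant coherent gluing data on the standard affine cover, which is essentially the fact that $\mathcal{A}_\Sigma=q_*\mathcal{O}_X$. The remaining steps are formal and rest on Corollary~\ref{last cor before equiv} and Lemma~\ref{lemma on proj res in a-mod}, which we are free to cite.
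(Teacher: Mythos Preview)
Your proposal is correct and follows essentially the same approach as the paper: both use exactness of $\theta$ (Corollary~\ref{last cor before equiv}), the identification $\theta(\mathcal{O}_{st({\bf o}_\sigma)}(m))\cong\mathcal{A}_\Sigma(m)_{[\sigma]}$, the matching $Hom$-formulas for the projective generators, and then the argument pattern of Proposition~\ref{equiv of cat of modules} for full faithfulness and essential surjectivity. You supply more detail than the paper at the two places it says ``clear'' --- the computation of $\theta$ on the generators via the closed-support description of extension by zero, and the identification of $\mathcal{A}_\Sigma\text{-coh}$ with equivariant coherent gluing data over the affine cover $\{st({\bf o}_\sigma)\}$ --- but the logical skeleton is the same.
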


\begin{proof} By Corollary \ref{last cor before equiv} we know that
the functor $\theta $ is exact. The category $\mathcal{O}_{X,T}\text{-mod}$ (resp. $\mathcal{A}_\Sigma \text{-mod}$) has a system of projective generators $\{\mathcal{O}_{st({\bf o}_\sigma)}(m)\}_{\sigma ,m}$ (resp. $\{\mathcal{A}_{[\sigma]}(m)\}_{\sigma ,m}$). It is clear that
$$\theta (\mathcal{O}_{st({\bf o}_\sigma)}(m))=\mathcal{A}_{[\sigma]}(m)$$

For any $\mathcal{M}\in \mathcal{O}_{X,T}\text{-mod}$ and ${\bf o}\subset X$ we have a functorial isomorphism
\begin{equation}\label{isom of homs} Hom (\mathcal{O}_{st({\bf o})}(m),\mathcal{M})=\Gamma (st({\bf o}),\mathcal{M})^m
\end{equation}
Likewise, for any $\sigma \in \Sigma$ and any $\mathcal{N}\in \mathcal{A}_{\Sigma}\text{-mod}$ we have
\begin{equation}\label{second isom of homs}Hom (\mathcal{A}_{[\sigma]}(m),\mathcal{N})=\mathcal{N}_{\sigma}^m
\end{equation}
It follows that $\theta$ induces an isomorphism
$$\begin{array}{rcl}Hom (\mathcal{O}_{st({\bf o}_\sigma)}(m),\mathcal{M})& = & \Gamma (st({\bf o}_\sigma),\mathcal{M})^m\\
& = & \theta (\mathcal{M})_\sigma ^m\\
& = & Hom (\mathcal{A}_{[\sigma]}(m),\theta(\mathcal{M}))\\
& = & Hom (\theta(\mathcal{O}_{st({\bf o}_\sigma)}(m)),\theta(\mathcal{M}))
\end{array}
$$
Now arguing as in the proof of Proposition \ref{equiv of cat of modules}, one shows that $\theta$ is full and faithful and that it is an equivalence of categories. The last assertion of the proposition is clear.
\end{proof}

Combining the equivalence $\theta$ with the equivalence $\delta ^*$ in Proposition \ref{equiv of cat of modules}
we find the following corollary

\begin{corollary} The composition  $\delta ^*\cdot \theta :\mathcal{O}_{X,T}\text{-mod}\to \mathcal{B}_{\Sigma}\text{-mod}$
is an equivalence of abelian categories. It induces an equivalence
$coh_{X,T}\simeq \mathcal{B}_{\Sigma}\text{-coh}$.
\end{corollary}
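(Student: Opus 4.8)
The plan is to obtain the statement by simply composing two equivalences already established. By Proposition \ref{prop on comb equiv} the functor $\theta:\mathcal{O}_{X,T}\text{-mod}\to\mathcal{A}_\Sigma\text{-mod}$ is an equivalence of abelian categories, and by Proposition \ref{equiv of cat of modules} the extension-of-scalars functor $\delta^*:\mathcal{A}_\Sigma\text{-Mod}\to\mathcal{B}_\Sigma\text{-Mod}$ restricts to an equivalence $\mathcal{A}_\Sigma\text{-mod}\to\mathcal{B}_\Sigma\text{-mod}$. Since a composition of equivalences of abelian categories is again such an equivalence, $\delta^*\cdot\theta$ is an equivalence $\mathcal{O}_{X,T}\text{-mod}\to\mathcal{B}_\Sigma\text{-mod}$, and nothing more is needed for the first assertion.

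For the second assertion I would just trace the two functors on the distinguished full subcategories. Proposition \ref{prop on comb equiv} already records that $\theta$ restricts to an equivalence $coh_{X,T}\to\mathcal{A}_\Sigma\text{-coh}$, and Proposition \ref{equiv of cat of modules} records that $\delta^*$ restricts to an equivalence $\mathcal{A}_\Sigma\text{-coh}\to\mathcal{B}_\Sigma\text{-coh}$. Composing, the restriction of $\delta^*\cdot\theta$ to $coh_{X,T}$ is an equivalence onto $\mathcal{B}_\Sigma\text{-coh}$. The only thing worth remarking is that these subcategory statements genuinely match up: each of $\theta$ and $\delta^*$ is an equivalence on the ambient category and carries the relevant full subcategory \emph{onto} (not merely into) the target subcategory, so restricting along a pair of corresponding full subcategories again produces an equivalence.

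There is no real obstacle here; the substantive work is entirely contained in Propositions \ref{prop on comb equiv} and \ref{equiv of cat of modules}, and the corollary is a formal consequence. Accordingly I would expect the write-up to be essentially one sentence: the composition of the equivalences $\theta$ and $\delta^*$ gives the claimed equivalences on $\mathcal{O}_{X,T}\text{-mod}$ and on $coh_{X,T}$.
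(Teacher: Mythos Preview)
Your proposal is correct and matches the paper's approach exactly: the paper simply states that the corollary follows by combining the equivalence $\theta$ of Proposition \ref{prop on comb equiv} with the equivalence $\delta^*$ of Proposition \ref{equiv of cat of modules}, and gives no further argument. Your observation that the subcategory restrictions line up is accurate and is precisely what the paper implicitly relies on.
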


By Proposition \ref{prop on der eq} we obtain the following theorem.

\begin{thm} \label{summary on der equiv and emb} (1) We have fully faithful functors
$$D^b(coh_{X,T})\hookrightarrow D^b(\mathcal{A}_{\Sigma}\text{-mod})\quad \text{and}\quad
D^b(coh_{X,T})\hookrightarrow D^b(\mathcal{B}_{\Sigma}\text{-mod})$$
whose essential images are the categories $D^b(\mathcal{A}_{\Sigma}\text{-coh})$\quad \text{and}\quad  $D^b(\mathcal{B}_{\Sigma}\text{-coh})$ respectively.

(2) In particular, the functors
$$D^b(\mathcal{A}_{\Sigma}\text{-coh})\to
D^b(\mathcal{A}_{\Sigma}\text{-mod})\quad \text{and}\quad
D^b(\mathcal{B}_{\Sigma}\text{-coh})\to D^b(\mathcal{B}_{\Sigma}\text{-mod})$$
are also full and faithful.
\end{thm}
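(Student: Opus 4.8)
The plan is to deduce everything formally from the three results already in hand, by transport of structure. Recall first that an exact equivalence of abelian categories induces an equivalence of bounded derived categories; hence Proposition \ref{prop on comb equiv} gives an equivalence of triangulated categories $D^b(\theta)\colon D^b(\mathcal{O}_{X,T}\text{-mod})\xrightarrow{\ \sim\ }D^b(\mathcal{A}_\Sigma\text{-mod})$, and Proposition \ref{equiv of cat of modules} gives an equivalence $D^b(\delta^*)\colon D^b(\mathcal{A}_\Sigma\text{-mod})\xrightarrow{\ \sim\ }D^b(\mathcal{B}_\Sigma\text{-mod})$. I would then \emph{define} the first functor of the theorem as the composite
$$D^b(coh_{X,T})\to D^b(\mathcal{O}_{X,T}\text{-mod})\xrightarrow{\ D^b(\theta)\ }D^b(\mathcal{A}_\Sigma\text{-mod}),$$
where the first arrow is fully faithful by Proposition \ref{prop on der eq}. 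A composite of a fully faithful functor with an equivalence is fully faithful, so this composite is fully faithful; composing once more with $D^b(\delta^*)$ produces the second fully faithful functor of part (1).

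For the statement about essential images I would consider the square
\[
\begin{CD}
D^b(coh_{X,T}) @>>> D^b(\mathcal{O}_{X,T}\text{-mod}) \\
@VV{D^b(\theta|)}V @VV{D^b(\theta)}V \\
D^b(\mathcal{A}_{\Sigma}\text{-coh}) @>>> D^b(\mathcal{A}_{\Sigma}\text{-mod})
\end{CD}
\]
in which the horizontal arrows are induced by the full abelian subcategory inclusions and the left vertical arrow is induced by the equivalence $\theta\colon coh_{X,T}\xrightarrow{\sim}\mathcal{A}_\Sigma\text{-coh}$ of Proposition \ref{prop on comb equiv}. For this square to make sense one needs the inclusions $coh_{X,T}\hookrightarrow\mathcal{O}_{X,T}\text{-mod}$ and $\mathcal{A}_\Sigma\text{-coh}\hookrightarrow\mathcal{A}_\Sigma\text{-mod}$ to be exact (so that the induced triangulated functors exist and $\theta$ restricts to an \emph{exact} equivalence of the subcategories). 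For the first inclusion this is the classical fact that on the Noetherian scheme $X$ a kernel or cokernel of a morphism of coherent equivariant sheaves — formed in the ambient abelian category $\mathcal{O}_{X,T}\text{-mod}$ — is again coherent and equivariant; the exactness of the second inclusion then follows by transporting this along $\theta$, and that of $\mathcal{B}_\Sigma\text{-coh}\hookrightarrow\mathcal{B}_\Sigma\text{-mod}$ along $\delta^*$. Granting this, the square commutes up to natural isomorphism and both vertical arrows are equivalences.

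The theorem now drops out. In the square, $D^b(\theta)$ and the left vertical arrow are equivalences while the top arrow is fully faithful, so the bottom arrow $D^b(\mathcal{A}_\Sigma\text{-coh})\to D^b(\mathcal{A}_\Sigma\text{-mod})$ is fully faithful and has the same essential image as the composite $D^b(coh_{X,T})\to D^b(\mathcal{A}_\Sigma\text{-mod})$ of the first paragraph — this is exactly part (1) for $\mathcal{A}_\Sigma$, and appending $D^b(\delta^*)$ gives the $\mathcal{B}_\Sigma$ statement. Part (2) is precisely the full faithfulness of these bottom arrows, already obtained. I expect the only step needing genuine care — everything else being a formal game with fully faithful functors and equivalences — to be the exactness of the "coh" subcategory inclusions; but this should be routine once it is reduced to the Noetherian statement and then carried across the equivalences $\theta$ and $\delta^*$.
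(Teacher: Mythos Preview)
Your proposal is correct and follows essentially the same approach as the paper, which simply records the theorem as an immediate consequence of Proposition~\ref{prop on der eq} combined with the equivalences of Propositions~\ref{prop on comb equiv} and~\ref{equiv of cat of modules}. You are more explicit than the paper about the formal transport-of-structure square and about the exactness of the ``coh'' subcategory inclusions (which the paper tacitly assumes by calling them full abelian subcategories), but the underlying argument is the same.
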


\section{Koszul duality and the Serre functor}\label{kos dual and serre funct}

\subsection{} 

Let us define an exact functor $\phi : \text{co-}\mathcal{A}_{\Sigma }\text{-Mod}\to C(Qcoh_{X,T})$ - the category of complexes in
$Qcoh_{X,T}$.
Fix an orientation of each simplex $\sigma \in \Sigma$. Let $\mathcal{N}=\{\mathcal{N}_\sigma \}
\in \text{co-}\mathcal{A}_{\Sigma }\text{-Mod}$. Then each $\mathcal{N}_\sigma$ is a graded $A(st({\bf o}_\sigma))$-module,
which corresponds to a $T$-equivariant quasi-coherent sheaf on
the affine open subset $st({\bf o}_\sigma)$. Let $\phi (\mathcal{N}_\sigma)\in Qcoh_{X,T}$ be the direct image of this sheaf under the inclusion $st({\bf o}_\sigma)\hookrightarrow X$. If $\tau \subset \sigma$ then the structure morphism of $A(st({\bf o}_\sigma))$-modules $\beta _{\tau \sigma}:\mathcal{N}_\tau \to \mathcal{N}_\sigma$ gives the morphism of equivariant quasi-coherent sheaves $\phi (\beta _{\tau \sigma}):\phi (\mathcal{N}_\tau) \to \phi (\mathcal{N}_\sigma)$. Define $\phi (\mathcal{N})$ as the complex
$$\phi (\mathcal{N}):=\cdots \bigoplus _{\dim (\tau )=i}\phi(\mathcal{N}_\tau) \stackrel{d^i}{\longrightarrow }
\bigoplus _{\dim (\sigma )=i+1}\phi(\mathcal{N}_\sigma) \cdots$$
where the summand $\phi(\mathcal{N}_\tau)$ appears in cohomological degree $\dim (\tau)$ and each differential $d^i$ is the sum of maps $\phi (\beta _{\tau \sigma})$ with $\pm$ sign depending on whether the orientations of $\tau$ and $\sigma$ agree or not. This defines the exact functor
$\phi :\text{co-}\mathcal{A}_{\Sigma }\text{-Mod}\to C(Qcoh_{X,T})$.
We will be interested in the restriction of this functor to the subcategory $\text{co-}\mathcal{A}_{\Sigma }\text{-tcf}\subset \text{co-}\mathcal{A}_{\Sigma }\text{-Mod}$ and the corresponding derived functor
$$\phi :D^b(\text{co-}\mathcal{A}_{\Sigma }\text{-tcf})\to D^b(Qcoh _{X,T})$$

Let $\psi :D^b(coh _{X,T})\to D^b(\mathcal{A}_{\Sigma }\text{-mod})$ be the fully faithful embedding (Theorem \ref{summary on der equiv and emb}) and denote the Koszul duality functor ${\bf K}_A$ in Theorem \ref{koz dual thm} by ${\bf K}$.

\begin{theorem} \label{main theorem nice formulation} Assume that the toric variety is complete (and smooth). Then the composition of functors $\phi \cdot {\bf K}\cdot \psi$ has its essential image in the full subcategory $D^b(coh_{X,T})\subset D^b(Qcoh_{X,T})$, and is isomorphic to the Serre functor
$$S:D^b(coh_{X,T})\to D^b(coh_{X,T}),\quad S(\mathcal{F})=\omega_X[n]\otimes _{\mathcal{O}_X}\mathcal{F}$$ (where $\omega _X$ has the natural equivariant structure).
That is we have the commutative diagram of functors
\begin{equation}\label{comm diag of functors main thm}
\begin{CD}
D^b(coh _{X,T}) @>S>> D^b(coh _{X,T})\\
@VV\psi V  @AA\phi A \\
D^b(\mathcal{A}_{\Sigma}\text{-mod}) @>{\bf K} >> D^b(\text{co-}\mathcal{A}_{\Sigma}\text{-tcf})
\end{CD}
\end{equation}
\end{theorem}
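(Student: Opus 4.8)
The plan is to reduce the statement to a spanning class, compute both functors explicitly on it, and recognize the output as an equivariant Cousin complex. Since $X$ is smooth, every object of $coh_{X,T}$ is quasi-isomorphic to a bounded complex of equivariant vector bundles (as already used in the proof of Proposition~\ref{prop on der eq}), so the equivariant locally free sheaves form a spanning class for $D^b(coh_{X,T})$. Both $S=\omega_X[n]\otimes_{\mathcal O_X}(-)$ and $\phi\cdot{\bf K}\cdot\psi$ are exact (triangulated) functors $D^b(coh_{X,T})\to D^b(Qcoh_{X,T})$ — the first obviously, the second because $\psi$ is induced by an exact functor of abelian categories, ${\bf K}$ is a triangulated equivalence, and $\phi$ comes from an exact functor to complexes. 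Hence it suffices to produce a natural transformation between them and to check that it is an isomorphism on equivariant vector bundles: the full subcategory of objects on which a natural transformation of exact functors is invertible is thick and triangulated, so it is then all of $D^b(coh_{X,T})$; since $S$ lands in $D^b(coh_{X,T})$ this also forces $\phi{\bf K}\psi$ to do so. (Equivalently, $X$ being smooth and complete, $D^b(coh_{X,T})$ has a Serre functor, unique up to canonical isomorphism, and one could instead exhibit $\phi{\bf K}\psi$ as a Serre functor.)

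First I would compute ${\bf K}(\psi\mathcal F)$ for $\mathcal F$ an equivariant vector bundle. By Proposition~\ref{prop on comb equiv}, $\psi\mathcal F=\theta\mathcal F$ lies in $\mathcal A_\Sigma\text{-coh}$, and each stalk $(\psi\mathcal F)_\sigma=\Gamma(st({\bf o}_\sigma),\mathcal F)^{lf}$ is a finitely generated \emph{free} graded $\mathcal A_{\Sigma,\sigma}$-module, because $st({\bf o}_\sigma)\simeq{\bf o}_\sigma\times\mathbb A^{\dim\sigma}$ is equivariantly an affine-space bundle over the orbit torus, so $\mathcal F|_{st({\bf o}_\sigma)}$ is equivariantly a sum of line bundles. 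Therefore, exactly as in Lemma~\ref{lemma half of koz d}(2) and Proposition~\ref{addition to comb in case of complete fan}, ${\bf K}(\psi\mathcal F)$ is computed directly by the cellular complex $C^\bullet(\mathcal K\otimes_{\mathcal A_\Sigma}\psi\mathcal F)$, with no projective resolution needed. Using the ``coh'' condition $\mathcal A_{\Sigma,\tau}\otimes_{\mathcal A_{\Sigma,\sigma}}(\psi\mathcal F)_\sigma\simeq(\psi\mathcal F)_\tau$ for $\tau\subseteq\sigma$, the stalk of that complex at a cone $\tau$ becomes the cellular complex of the \emph{constant} diagram over the poset $\{\sigma:\tau\subseteq\sigma\}$ with value $\mathcal G_\tau:=\mathcal A_{\Sigma,\tau}^\vee\otimes_{\mathcal A_{\Sigma,\tau}}(\psi\mathcal F)_\tau$; since $\Sigma$ is complete this poset is the face poset of a ball, so by the argument of Proposition~\ref{addition to comb in case of complete fan} the corresponding augmented complex is acyclic. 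Consequently ${\bf K}(\psi\mathcal F)\simeq\mathcal G_{\mathcal F}[n]$, where $\mathcal G_{\mathcal F}=\{\mathcal G_\tau\}\in\text{co-}\mathcal A_\Sigma\text{-tcf}$ is the cosheaf just described (that $\mathcal G_\tau$ is torsion co-finite is a direct computation); the quasi-isomorphism is given by the manifestly functorial augmentation map, so $\mathcal F\mapsto\mathcal G_{\mathcal F}$ underlies a natural transformation.

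Next I would apply $\phi$ and identify $\phi(\mathcal G_{\mathcal F})$ with the equivariant Cousin complex of $\omega_X\otimes_{\mathcal O_X}\mathcal F$ for the orbit stratification of $X$ (recalled in the Appendix). In cohomological degree $\dim\tau$, the complex $\phi(\mathcal G_{\mathcal F})$ has the pushforward to $X$ of the quasi-coherent sheaf on $st({\bf o}_\tau)$ attached to $\mathcal G_\tau$, while the Cousin complex has there the pushforward to $X$ of $\underline H^{\dim\tau}_{{\bf o}_\tau}\bigl((\omega_X\otimes\mathcal F)|_{st({\bf o}_\tau)}\bigr)$. Writing $R_\tau=\mathcal A_{\Sigma,\tau}=\mathbb C[x^{\pm}]\otimes\mathbb C[y_1,\dots,y_{\dim\tau}]$ and $\mathfrak a_\tau=(y_1,\dots,y_{\dim\tau})$, the generating regular top form on $st({\bf o}_\tau)$ — the wedge of the invariant form $\bigwedge_i\tfrac{dx_i}{x_i}$ on the torus factor with $\bigwedge_j dy_j$ on the $\mathbb A^{\dim\tau}$-factor — exhibits $\omega_{st({\bf o}_\tau)}\simeq R_\tau(-\kappa_\tau)$ for an explicit weight $\kappa_\tau\in M$, and the graded local-cohomology identity $H^{\dim\tau}_{(y)}\bigl(\mathbb C[y]\bigr)\simeq\mathbb C[y]^\vee(\kappa_\tau)$ gives $H^{\dim\tau}_{\mathfrak a_\tau}(R_\tau)\simeq R_\tau^\vee(\kappa_\tau)$; hence for any free $M$ one gets $H^{\dim\tau}_{\mathfrak a_\tau}(\omega_{R_\tau}\otimes_{R_\tau}M)\simeq R_\tau^\vee\otimes_{R_\tau}M$, which for $M=(\psi\mathcal F)_\tau$ is exactly $\mathcal G_\tau$. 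Matching the differentials — the sign-twisted maps $\pm\phi(\beta_{\tau\sigma})$ induced by the dualized restrictions $\mathcal A_{\Sigma,\tau}^\vee\to\mathcal A_{\Sigma,\sigma}^\vee$ — with the residue maps of the Cousin complex, one concludes that $\phi(\mathcal G_{\mathcal F})$ \emph{is} that Cousin complex. Because $\mathcal F$ is locally free on the smooth $X$, each stratum ${\bf o}_\tau$ is smooth of codimension $\dim\tau$ and $\underline H^j_{{\bf o}_\tau}$ of a locally free sheaf vanishes for $j\neq\dim\tau$; so the Cousin complex is a resolution of $\omega_X\otimes\mathcal F$, whence $\phi(\mathcal G_{\mathcal F})\xrightarrow{\sim}\omega_X\otimes\mathcal F$ in $D^b(Qcoh_{X,T})$ with coherent cohomology, and $\phi\cdot{\bf K}\cdot\psi(\mathcal F)\simeq\omega_X[n]\otimes\mathcal F=S(\mathcal F)$, functorially. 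With the extension step of the first paragraph this yields the commutative diagram~\eqref{comm diag of functors main thm}.

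The step I expect to be the main obstacle is the identification in the third paragraph, specifically: (a) checking that the sign-twisted combinatorial differentials $\pm\phi(\beta_{\tau\sigma})$ coincide with the residue (connecting) maps of the Cousin complex — this requires the local-cohomology presentation of the residue to be matched against the dualized restriction $R_\sigma\hookrightarrow R_\tau$, and getting the signs right needs a fixed global orientation of $N_{\mathbb R}$ as in Proposition~\ref{addition to comb in case of complete fan}; and (b) the compatible weight bookkeeping for $\omega_{st({\bf o}_\tau)}$, $H^{\dim\tau}_{(y)}$ and $\mathcal A_{\Sigma,\tau}^\vee$ across all faces of a given maximal cone. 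The remaining ingredients — exactness of $\psi$, ${\bf K}$, $\phi$, freeness of the stalks $(\psi\mathcal F)_\sigma$, the ball-acyclicity argument, the Cousin resolution property, and the passage from the spanning class to all of $D^b(coh_{X,T})$ — are routine given what is already established here and in the Appendix.
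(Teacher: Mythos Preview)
Your proposal is correct and would yield the theorem, but it takes a genuinely different route from the paper. The paper proceeds in two steps: first it treats the single case $\mathcal F=\mathcal O_X$, showing directly (Lemma~\ref{acycl of augm diag}) that the augmented complex $\omega_X\to\phi(\mathcal A_\Sigma^\vee)$ is acyclic by a bare-hands monomial-tracking argument on each affine chart $U\cong\mathbb A^n$, which combined with Proposition~\ref{addition to comb in case of complete fan} gives $\phi\cdot{\bf K}\cdot\psi(\mathcal O_X)\simeq\omega_X[n]$; second it proves a projection-formula type statement (Lemma~\ref{last lemma}), namely a natural isomorphism $(\phi\cdot{\bf K}\cdot\psi(\mathcal O_X))\stackrel{\bf L}{\otimes}\mathcal F\simeq\phi\cdot{\bf K}\cdot\psi(\mathcal F)$, and concludes immediately. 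The identification of $\phi\cdot{\bf K}\cdot\psi(\mathcal F)$ with the Cousin complex of $\omega_X\otimes\mathcal F$ is mentioned only afterward as a comment (Section~6.3), not used in the proof.

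Your approach instead computes $\phi\cdot{\bf K}\cdot\psi(\mathcal F)$ for every locally free $\mathcal F$ in one shot by identifying it with the Cousin complex. This is more conceptual and explains the subsequent remark in the paper, but it forces you to confront exactly the obstacle you flag: matching the sign-twisted combinatorial differentials $\pm\phi(\beta_{\tau\sigma})$ with the connecting (residue) maps of the Cousin complex, together with the weight bookkeeping across all faces. The paper's route sidesteps this entirely---once the $\mathcal O_X$ case is done by elementary means, the tensor compatibility lemma handles everything, and no comparison of differentials is ever needed. So the trade-off is: your argument gives more (it identifies the actual complex, not just its quasi-isomorphism class), at the cost of a step that is believable but fiddly; the paper's argument is shorter and avoids that step, at the cost of being less explicit about what $\phi\cdot{\bf K}\cdot\psi(\mathcal F)$ looks like term by term.
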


\subsection{Proof of Theorem \ref{main theorem nice formulation}}
 Since $\psi (\mathcal{O}_{X})=\mathcal{A}_{\Sigma}$, by Proposition \ref{addition to comb in case of complete fan} we have
\begin{equation}\label{formula for complete}
{\bf K}\cdot \psi (\mathcal{O}_X)\simeq \mathcal{A}_{\Sigma}^\vee
\end{equation}

Consider the complex in $C(Qcoh_{X,T})$
\begin{equation}\label{phi of str}
\phi (\mathcal{A}_{\Sigma}^\vee):\ \phi(\mathcal{A}_{\Sigma ,0}^\vee) \stackrel{d^0}{\to} \bigoplus _{\dim (\tau )=1}\phi(\mathcal{A}_{\Sigma ,\tau}^\vee) \stackrel{d^1}{\longrightarrow }
\bigoplus _{\dim (\sigma )=2}\phi(\mathcal{A}_{\Sigma ,\sigma}^\vee) \cdots
\end{equation}
where $0\in \Sigma$ is the zero cone, $\mathcal{A}_{\Sigma ,0}^\vee  \simeq \mathcal{A}_{\Sigma ,0}=A(T)$, and $\phi (\mathcal{A}_{\Sigma ,0}^\vee)$ is the direct image $j_*\mathcal{O}_T$ under the open embedding $j:T\hookrightarrow X$. In particular there is a natural injective morphism of equivariant sheaves $\mathcal{O}_X\to
\phi(\mathcal{A}_{\Sigma ,0}^\vee)$.

Recall that the canonical line bundle on $X$ corresponds to the negative of the "boundary divisor", i.e.
$$\omega _X=\mathcal{O}_X(-\Sigma D_i)$$
where $D_i$'s are the distinct $T$-invariant divisors. We have the composition of inclusions $\omega _X\subset \mathcal{O}_X\subset \phi(\mathcal{A}_{\Sigma ,0}^\vee)$ which we denote by $\eta$. Let us consider the augmented diagram

\begin{equation}\label{augm diagr}
0\to \omega _X\stackrel{\eta}{\to} \phi(\mathcal{A}_{\Sigma ,0}^\vee) \stackrel{d^0}{\to} \bigoplus _{\dim (\tau )=1}\phi(\mathcal{A}_{\Sigma ,\tau}^\vee) \stackrel{d^1}{\longrightarrow }
\bigoplus _{\dim (\sigma )=2}\phi(\mathcal{A}_{\Sigma ,\sigma}^\vee) \cdots
\end{equation}

\begin{lemma} \label{acycl of augm diag} The diagram \eqref{augm diagr} is an acyclic complex.
\end{lemma}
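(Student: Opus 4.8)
The plan is to check acyclicity of \eqref{augm diagr} on the affine open cover $\{st({\bf o}_\sigma)\}_{\sigma\in\Sigma}$ of $X$ and then to split everything into $M$-homogeneous pieces. Conceptually \eqref{phi of str} is the equivariant Cousin complex of $\omega_X$ for the stratification of $X$ by orbits, so the statement could also be deduced from the generalities recalled in the Appendix, but the direct argument is short.

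First I would reduce to modules. Every term of \eqref{augm diagr} is a quasi-coherent $T$-equivariant $\mathcal{O}_X$-module --- the line bundle $\omega_X$, and direct images under $st({\bf o}_\tau)\hookrightarrow X$ of quasi-coherent sheaves on affine opens --- so the complex is acyclic iff its restriction to each $st({\bf o}_\sigma)$ is acyclic, and, $st({\bf o}_\sigma)$ being affine, iff the complex of $A(st({\bf o}_\sigma))$-modules obtained by applying $\Gamma(st({\bf o}_\sigma),-)$ is exact. Fix $\sigma$. For a cone $\tau$ one has $\Gamma(st({\bf o}_\sigma),\phi(\mathcal{A}_{\Sigma,\tau}^\vee))=\Gamma(st({\bf o}_\sigma)\cap st({\bf o}_\tau),\mathcal{G}_\tau)$, where $\mathcal{G}_\tau$ is the quasi-coherent sheaf on $st({\bf o}_\tau)$ attached to $A(st({\bf o}_\tau))^\vee$, and $st({\bf o}_\sigma)\cap st({\bf o}_\tau)=st({\bf o}_{\sigma\cap\tau})$. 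Since $X$ is smooth, $A(st({\bf o}_\tau))\cong\bbC[x_1^\pm,\dots,x_d^\pm][y_1,\dots,y_{n-d}]$ with $d=\dim{\bf o}_\tau$, the ideal $(y_1,\dots,y_{n-d})$ cutting out the closed orbit ${\bf o}_\tau\subset st({\bf o}_\tau)$, and the graded dual $A(st({\bf o}_\tau))^\vee$ is torsion over this ideal; hence $\mathcal{G}_\tau$ is supported on ${\bf o}_\tau$, so $\Gamma(st({\bf o}_\sigma),\phi(\mathcal{A}_{\Sigma,\tau}^\vee))=A(st({\bf o}_\tau))^\vee$ when $\tau$ is a face of $\sigma$ (then $st({\bf o}_\tau)\subset st({\bf o}_\sigma)$ and ${\bf o}_\tau\subset st({\bf o}_{\sigma\cap\tau})$) and $0$ otherwise. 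Likewise $\omega_X=\mathcal{O}_X(-\sum_\rho D_\rho)$ gives $\Gamma(st({\bf o}_\sigma),\omega_X)=\chi^{m_\sigma}A(st({\bf o}_\sigma))$, where $m_\sigma\in M$ satisfies $\langle m_\sigma,u_\rho\rangle=1$ for every ray $\rho\subset\sigma$ ($u_\rho$ the primitive generator); such an $m_\sigma$ exists because $X$ is smooth.

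Now I would argue degree by degree. The resulting complex of $A(st({\bf o}_\sigma))$-modules is $M$-graded, so it suffices to prove exactness in each degree $m\in M$. Let $\rho_1,\dots,\rho_k$ be the rays of $\sigma$; its faces are the cones $\tau_I$ spanned by $\{\rho_i:i\in I\}$, $I\subseteq\{1,\dots,k\}$, with $\dim\tau_I=|I|$. By the above, the degree-$m$ part of $A(st({\bf o}_{\tau_I}))^\vee$ is $\bbC$ if $\langle m,u_{\rho_i}\rangle\le0$ for all $i\in I$ and $0$ otherwise, i.e. it is nonzero exactly for $I\subseteq J:=\{i:\langle m,u_{\rho_i}\rangle\le0\}$; and the degree-$m$ part of $\Gamma(st({\bf o}_\sigma),\omega_X)$ is $\bbC$ if $\langle m,u_{\rho_i}\rangle\ge1$ for all $i$, i.e. (these being integers) exactly when $J=\varnothing$, and $0$ otherwise. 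For the chosen orientations the differentials then identify the degree-$m$ part of the subcomplex $\bigoplus_{\tau\le\sigma}\phi(\mathcal{A}_{\Sigma,\tau}^\vee)$ of \eqref{augm diagr} with the augmented simplicial cochain complex $0\to\bigoplus_{I\subseteq J,\,|I|=0}\bbC\to\bigoplus_{I\subseteq J,\,|I|=1}\bbC\to\cdots$ of the simplex on vertex set $J$ (the summand indexed by $I$ in cohomological degree $|I|$). If $J\ne\varnothing$ this simplex is contractible, so this complex is exact, and the $\omega_X$-term is $0$, so the whole degree-$m$ complex is exact. If $J=\varnothing$, the only nonzero term of that subcomplex is the summand $\bbC$ in cohomological degree $0$ coming from the cone $\{0\}$, the $\omega_X$-term is likewise $\bbC$, and $\eta$ induces between them the identity $\bbC\chi^m\to\bbC\chi^m$ (on sections over $st({\bf o}_\sigma)$ it is $\omega_X\subset\mathcal{O}_X\subset j_*\mathcal{O}_T$); so the degree-$m$ complex is exact here too. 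In passing this also checks $d^0\cdot\eta=0$, i.e. that \eqref{augm diagr} really is a complex.

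The only point requiring genuine care is the bookkeeping of the first step --- the identification of $\Gamma(st({\bf o}_\sigma),\phi(\mathcal{A}_{\Sigma,\tau}^\vee))$, in particular its vanishing when $\tau$ is not a face of $\sigma$, which rests on the torsion module $A(st({\bf o}_\tau))^\vee$ being supported, as a sheaf, on the closed orbit of $st({\bf o}_\tau)$ --- together with the sign matching that makes the degree-$m$ complex literally the standard cochain complex of a simplex. Granting that, the lemma reduces, exactly as Lemma \ref{lemma-acycl-1} did, to the contractibility of a simplex.
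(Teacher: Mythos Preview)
Your proof is correct and follows essentially the same strategy as the paper's: restrict to an affine $T$-invariant open (the paper uses stars of fixed points, i.e.\ maximal cones, which already suffice for a cover), pass to graded modules, and check exactness monomial by monomial. Your identification of the degree-$m$ piece with the augmented cochain complex of the simplex on $J=\{i:\langle m,u_{\rho_i}\rangle\le 0\}$ is precisely what the paper means by ``tracing the appearances of a fixed monomial'', just spelled out more explicitly; the paper illustrates this with the $n=2$ example rather than stating the general combinatorics.
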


\begin{proof} It suffices to prove the assertion locally on $X$. Since $X$ is complete, it is covered by open subsets which are the stars of $T$-fixed points. Choose one such open subset $U$. Then $U$ is isomorphic as a toric variety to the affine $n$-space, i.e. $A(U)=\bbC [x_1,...,x_n]$ with the standard $T$-action. The category $Qcoh_{U,T}$ is equivalent to the category of graded $A(U)$-modules. Under this equivalence the dualizing sheaf $\omega _U$ corresponds to the principal ideal $(x_1\cdots x_n)\subset A(U)$. Choose a coordinate plane $L:\{x_{1}=\cdots =x_{s}=0\}\subset U$. It is the closure of a $T$-orbit ${\bf o}_\tau$ for a cone $\tau \in \Sigma$ of dimension $s$. Then the graded $A(U)$-module corresponding to the sheaf $\phi(\mathcal{A}_{\Sigma ,\tau}^\vee)$ is $\bbC [x_1^{-1},...,x_s^{-1},x_{s+1}^{\pm},...,x_n^{\pm}]$. (Similarly for the other $T$-orbits in $U$). So the fact that the restriction of the diagram \eqref{augm diagr} to $U$ is an exact complex can be verified by tracing the appearances of a fixed monomial $x_i^{\pm}$ in the modules $\phi(\mathcal{A}_{\Sigma ,\tau}^\vee)$. For example, if $n=2$, this diagram is the exact complex
$$0\to (x_1x_2) \to \bbC[x_1^{\pm},x_2^{\pm}]\to \left(\bbC[x_1^{-1},x_2^{\pm}]\oplus \bbC[x_1^{\pm},x_2^{-1}]\right)\to \bbC[x_1^{-1},x_2^{-1}]\to 0$$
\end{proof}

\begin{corollary}\label{isom on str one obj} There exists a canonical isomorphism of objects in  $D^b(coh_{X,T})$: $$\theta :\omega _X[n]\to \phi \cdot {\bf K}\cdot \psi (\mathcal{O}_X)$$
\end{corollary}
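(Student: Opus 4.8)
The plan is simply to chain together the identifications already in place in this section. By the discussion preceding the corollary, $\psi(\mathcal{O}_X) = \mathcal{A}_\Sigma$ (indeed $\theta(\mathcal{O}_X)_\sigma = \Gamma(st({\bf o}_\sigma),\mathcal{O}_X)^{lf} = A(st({\bf o}_\sigma)) = \mathcal{A}_{\Sigma,\sigma}$), and since the fan $\Sigma$ is complete Proposition~\ref{addition to comb in case of complete fan} gives a canonical isomorphism ${\bf K}\psi(\mathcal{O}_X) = {\bf K}(\mathcal{A}_\Sigma) \simeq \mathcal{A}_\Sigma^\vee[n]$ in $D^b(\text{co-}\mathcal{A}_\Sigma\text{-tcf})$. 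Here $\mathcal{A}_\Sigma^\vee$ denotes the cosheaf with stalks $\mathcal{A}_{\Sigma,\sigma}^\vee$; it lies in $\text{co-}\mathcal{A}_\Sigma\text{-tcf}$ because $\mathcal{A}_\Sigma\in\mathcal{A}_\Sigma\text{-mod}$ and $(-)^\vee$ is an anti-equivalence onto $\text{co-}\mathcal{A}_\Sigma\text{-tcf}$ by~\eqref{comm diag of equiv of sheaves}.

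Next I would push this object through $\phi$. Since $\phi$ is exact (direct image under an affine open immersion is exact), its derived functor preserves quasi-isomorphisms and commutes with the shift, so $\phi\cdot{\bf K}\cdot\psi(\mathcal{O}_X) \simeq \phi(\mathcal{A}_\Sigma^\vee)[n]$, and $\phi(\mathcal{A}_\Sigma^\vee)$ is by construction the complex~\eqref{phi of str}, in which the summand $\phi(\mathcal{A}_{\Sigma,\tau}^\vee)$ sits in cohomological degree $\dim(\tau)$; in particular the zero cone contributes in degree $0$. Now I invoke Lemma~\ref{acycl of augm diag}: prepending the canonical inclusion $\eta\colon \omega_X\hookrightarrow\mathcal{O}_X\hookrightarrow\phi(\mathcal{A}_{\Sigma,0}^\vee)$ yields the acyclic complex~\eqref{augm diagr}. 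Reading off cohomology, $\eta$ is injective with image $\ker d^0 = H^0$ of~\eqref{phi of str}, and $H^i$ of~\eqref{phi of str} vanishes for $i>0$; hence $\eta$ is a quasi-isomorphism $\omega_X\xrightarrow{\sim}\phi(\mathcal{A}_\Sigma^\vee)$, i.e.~\eqref{phi of str} is an equivariant resolution of $\omega_X$. As $\omega_X$ is coherent, the resulting object lies in $D^b(coh_{X,T})\subset D^b(Qcoh_{X,T})$. Shifting by $n$ and composing the canonical isomorphisms $\omega_X[n]\xrightarrow{\sim}\phi(\mathcal{A}_\Sigma^\vee)[n]\xrightarrow{\sim}\phi{\bf K}(\mathcal{A}_\Sigma)=\phi{\bf K}\psi(\mathcal{O}_X)$ produces the asserted $\theta$.

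No real obstacle remains at this stage: the two substantive inputs --- Proposition~\ref{addition to comb in case of complete fan} and Lemma~\ref{acycl of augm diag} --- are already proved, and the corollary is an assembly. The only points deserving care are formal: matching the cohomological-degree conventions so that the zero-cone term lands in degree $0$ (whence the overall shift $[n]$ is contributed solely by Proposition~\ref{addition to comb in case of complete fan}, not by $\phi$); the fact that the exact functor $\phi$ commutes with shifts and preserves quasi-isomorphisms; and the coherence of $\omega_X$, which is what lands the composite in $D^b(coh_{X,T})$. Canonicity of $\theta$ is clear since each of its three constituent isomorphisms is canonical. (The genuine work --- promoting this value on $\mathcal{O}_X$ to an isomorphism of \emph{functors} $\phi{\bf K}\psi\simeq S$ --- belongs to the proof of Theorem~\ref{main theorem nice formulation}, not to the corollary.)
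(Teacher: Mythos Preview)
Your proof is correct and follows exactly the paper's approach: combine formula~\eqref{formula for complete} (i.e.\ Proposition~\ref{addition to comb in case of complete fan}) with Lemma~\ref{acycl of augm diag}. You spell out the degree bookkeeping more carefully than the paper does; in fact formula~\eqref{formula for complete} as written in the paper omits the shift $[n]$, which is a typo, and your version ${\bf K}\psi(\mathcal{O}_X)\simeq\mathcal{A}_\Sigma^\vee[n]$ is the correct statement needed for the corollary.
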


\begin{proof} This follows immediately from formula \eqref{formula for complete} and Lemma \ref{acycl of augm diag}.
\end{proof}

\begin{lemma}\label{last lemma} For any $\mathcal{F}\in D^b(coh_{X,T})$ there exists a functorial isomorphism
$$(\phi \cdot {\bf K}\cdot \psi (\mathcal{O}_{X}))\stackrel{\bf L}{\otimes}\mathcal{F}\to \phi \cdot {\bf K}\cdot \psi (\mathcal{F})$$
\end{lemma}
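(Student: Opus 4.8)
The plan is to derive the lemma from a projection formula for the composite functor $\phi\cdot{\bf K}\cdot\psi$, first on equivariant vector bundles and then on all of $D^b(coh_{X,T})$. Since $X$ is smooth, every object of $D^b(coh_{X,T})$ is represented by a bounded complex of equivariant vector bundles, and both functors occurring in the statement --- $(\phi\cdot{\bf K}\cdot\psi)(\mathcal O_X)\stackrel{\bf L}{\otimes}(-)$ and $\phi\cdot{\bf K}\cdot\psi$ --- are triangulated, each being computed on such a complex by applying the relevant operations termwise and totalizing ($\psi=\theta$ is exact on $coh_{X,T}$ by Proposition~\ref{prop on comb equiv}, while ${\bf K}$ and $\phi$ are by construction ``termwise-plus-totalization'' on complexes built from the appropriate projective resp.\ ``good'' terms). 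So it will be enough to produce a natural isomorphism of the two functors on equivariant vector bundles and then pass to totalizations. The starting point is that for a vector bundle $\mathcal F$ the object $\psi(\mathcal F)=\theta(\mathcal F)$ lies in $\mathcal A_\Sigma\text{-coh}$ and has \emph{projective} stalks $\theta(\mathcal F)_\sigma=\Gamma(st({\bf o}_\sigma),\mathcal F)$, the restriction of a vector bundle to the affine open $st({\bf o}_\sigma)$; consequently ${\bf K}$ can be applied to $\theta(\mathcal F)$ directly, without passing to a projective resolution, just as in the proof of Lemma~\ref{lemma half of koz d}(2) and of Proposition~\ref{addition to comb in case of complete fan}, and $\mathcal N\otimes_{\mathcal A_\Sigma}\theta(\mathcal F)$ remains torsion cofinite for every $\mathcal N\in\text{co-}\mathcal A_\Sigma\text{-tcf}$.

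Next I would prove two auxiliary projection formulas. First, for ${\bf K}$: if $\mathcal G\in\mathcal A_\Sigma\text{-coh}$ has projective stalks, there is a natural isomorphism ${\bf K}(\mathcal A_\Sigma\otimes_{\mathcal A_\Sigma}\mathcal G)\cong{\bf K}(\mathcal A_\Sigma)\otimes_{\mathcal A_\Sigma}\mathcal G$, obtained by observing that both sides are the cellular complex of the $\Sigma$-diagram $\{\mathcal A_{[\sigma]}^\vee\otimes_{\mathcal A_{\Sigma,\sigma}}\mathcal G_\sigma\}$ in $\text{co-}\mathcal A_\Sigma\text{-tcf}$, the identification over a face $\rho\subset\sigma$ reading
\[
\mathcal A_{\Sigma,\rho}^\vee\otimes_{\mathcal A_{\Sigma,\sigma}}\mathcal G_\sigma\;=\;\mathcal A_{\Sigma,\rho}^\vee\otimes_{\mathcal A_{\Sigma,\rho}}(\mathcal A_{\Sigma,\rho}\otimes_{\mathcal A_{\Sigma,\sigma}}\mathcal G_\sigma)\;=\;\mathcal A_{\Sigma,\rho}^\vee\otimes_{\mathcal A_{\Sigma,\rho}}\mathcal G_\rho,
\]
where the coherence condition $\mathcal G_\rho\cong\mathcal A_{\Sigma,\rho}\otimes_{\mathcal A_{\Sigma,\sigma}}\mathcal G_\sigma$ is used in the last step. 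Second, for $\phi$: for $\mathcal N\in\text{co-}\mathcal A_\Sigma\text{-tcf}$ and a vector bundle $\mathcal F$ I would produce a natural isomorphism of complexes $\phi(\mathcal N\otimes_{\mathcal A_\Sigma}\theta(\mathcal F))\cong\phi(\mathcal N)\otimes_{\mathcal O_X}\mathcal F$ in $C(Qcoh_{X,T})$. Termwise this is the ordinary projection formula $j_{\tau*}(\mathcal G_\tau)\otimes_{\mathcal O_X}\mathcal F\cong j_{\tau*}(\mathcal G_\tau\otimes_{\mathcal O_{st({\bf o}_\tau)}}j_\tau^{*}\mathcal F)$ for the affine open embeddings $j_\tau:st({\bf o}_\tau)\hookrightarrow X$ and the locally free $\mathcal F$, applied to the quasi-coherent sheaf $\mathcal G_\tau$ on $st({\bf o}_\tau)$ attached to $\mathcal N_\tau$, together with the identification, valid over the affine $st({\bf o}_\tau)$, of $\mathcal G_\tau\otimes j_\tau^{*}\mathcal F$ with the sheaf attached to $\mathcal N_\tau\otimes_{A(st({\bf o}_\tau))}\Gamma(st({\bf o}_\tau),\mathcal F)$ (note that $\Gamma(st({\bf o}_\tau),\mathcal F)$ is locally finite, hence equals $\theta(\mathcal F)_\tau$); one then checks that these termwise isomorphisms commute with the co-sheaf structure maps $\beta_{\rho\tau}$ and so glue to an isomorphism of the cellular complexes $\phi(-)$.

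Granting the two formulas, for a vector bundle $\mathcal F$ one simply chains them, using $\psi(\mathcal O_X)=\mathcal A_\Sigma$ and $\psi(\mathcal F)=\theta(\mathcal F)$:
\[
\phi\cdot{\bf K}\cdot\psi(\mathcal F)=\phi\bigl({\bf K}(\mathcal A_\Sigma\otimes_{\mathcal A_\Sigma}\theta(\mathcal F))\bigr)\cong\phi\bigl({\bf K}(\mathcal A_\Sigma)\otimes_{\mathcal A_\Sigma}\theta(\mathcal F)\bigr)\cong\bigl(\phi\cdot{\bf K}\cdot\psi(\mathcal O_X)\bigr)\otimes_{\mathcal O_X}\mathcal F .
\]
As $\mathcal F$ is $\mathcal O_X$-flat the last term also represents the derived tensor product, and every isomorphism used is built from canonical maps, hence natural in $\mathcal F$. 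To finish, I would represent an arbitrary object of $D^b(coh_{X,T})$ by a bounded complex $\mathcal F^\bullet$ of equivariant vector bundles, apply the above termwise, and totalize: the termwise natural isomorphisms assemble into the required functorial isomorphism $(\phi\cdot{\bf K}\cdot\psi(\mathcal O_X))\stackrel{\bf L}{\otimes}\mathcal F\to\phi\cdot{\bf K}\cdot\psi(\mathcal F)$ in $D^b(Qcoh_{X,T})$. (Together with Corollary~\ref{isom on str one obj} this then identifies $\phi\cdot{\bf K}\cdot\psi$ with the Serre functor, completing Theorem~\ref{main theorem nice formulation}.)

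The hard part is not a single estimate but the need to carry out the two projection-formula computations while simultaneously tracking which ring one tensors over at each stalk, the sheaf/co-sheaf distinction, and the behaviour under all the structure maps. Concretely, for ${\bf K}$ the delicate point is that it may legitimately be evaluated ``directly'' on the stalkwise-projective sheaves $\mathcal A_\Sigma$ and $\theta(\mathcal F)$, and that this direct evaluation commutes with $-\otimes_{\mathcal A_\Sigma}\theta(\mathcal F)$ precisely because $\theta(\mathcal F)$ satisfies the coherence condition; for $\phi$ the delicate point is that the termwise projection-formula isomorphisms must be shown to intertwine the co-sheaf maps $\beta_{\rho\tau}$, so that one obtains an isomorphism of the cellular complexes $\phi(-)$ and not merely of their terms.
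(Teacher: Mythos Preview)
Your proposal is correct and follows essentially the same route as the paper: reduce to an equivariant vector bundle $\mathcal F$, observe that $\psi(\mathcal F)$ has free stalks so ${\bf K}$ may be applied directly, unwind ${\bf K}\cdot\psi(\mathcal F)$ as the cellular complex with terms $\mathcal A_{[\sigma]}^\vee\otimes_{A(st({\bf o}_\sigma))}\mathcal F(st({\bf o}_\sigma))$, and then use the projection formula for the affine open immersions $j_\tau:st({\bf o}_\tau)\hookrightarrow X$ to pull $\mathcal F$ outside. The paper carries this out in one pass by computing the stalk at $\tau\subset\sigma$ and invoking the projection formula for $j_{\tau*}$ directly; you organize the same computation into two separate projection formulas (one for ${\bf K}$, one for $\phi$) and are more explicit about the passage from vector bundles to general complexes via totalization. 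The only point to be careful about is that your expression ``${\bf K}(\mathcal A_\Sigma)\otimes_{\mathcal A_\Sigma}\mathcal G$'' tensors a co-sheaf with a sheaf, which is not defined in general; it acquires a co-sheaf structure precisely because $\mathcal G\in\mathcal A_\Sigma\text{-coh}$, as your stalk identification $\mathcal A_{\Sigma,\rho}^\vee\otimes_{\mathcal A_{\Sigma,\rho}}\mathcal G_\rho=\mathcal A_{\Sigma,\rho}^\vee\otimes_{\mathcal A_{\Sigma,\sigma}}\mathcal G_\sigma$ shows --- you use this, but it is worth stating explicitly that this is where coherence enters.
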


\begin{proof} Let $\mathcal{F}\in coh_{X,T}$ be locally free.
Recall that ${\bf K}\cdot \psi (\mathcal{F})$ is a complex consisting of direct sums of cosheaves
$$\mathcal{A}_{[\sigma]}^\vee \otimes _{\mathcal{A}_{\Sigma ,\sigma}}\psi(\mathcal{F})_\sigma=\mathcal{A}_{[\sigma]}^\vee \otimes _{A(st({\bf o}_\sigma))}\mathcal{F}(st({\bf o}_\sigma))$$
The stalk at $\tau \subset \sigma$ of such a cosheaf is the following graded $A(st({\bf o}_\tau))$-module which we consider as a quasi-coherent sheaf on $st({\bf o}_\tau)$:
$$A(st({\bf o}_\tau))^\vee \otimes _{A(st({\bf o}_\sigma))}\mathcal{F}(st({\bf o}_\sigma))\in Qcoh_{st({\bf o}_\tau) ,T}$$
Finally the functor $\phi$ takes the direct image under the open embedding $j:st({\bf o}_\tau)\hookrightarrow X$:
$$j_*\left(A(st({\bf o}_\tau))^\vee \otimes _{A(st({\bf o}_\sigma))}\mathcal{F}(st({\bf o}_\sigma))\right)\in Qcoh_{X,T}$$
which is naturally isomorphic to
$$j_*\left(A(st({\bf o}_\tau))^\vee \otimes _{A(st({\bf o}_\sigma))}\mathcal{O}_X(st({\bf o}_\sigma))\right)\otimes \mathcal{F}$$
This proves the lemma.
\end{proof}

Theorem \ref{main theorem nice formulation} now follows. Indeed, by Corollary \ref{isom on str one obj} and Lemma \ref{last lemma} we have for $\mathcal{F}\in D^b(coh_{X,T})$ the functorial isomorphism
$$\omega _X[n]\otimes \mathcal{F}\stackrel{\theta \otimes id}{\longrightarrow}\phi \cdot {\bf K}\cdot \psi (\mathcal{O}_X)\stackrel{{\bf L}}{\otimes}\mathcal{F}\simeq
\phi \cdot {\bf K}\cdot \psi (\mathcal{F})$$

\subsection{A comment on Theorem \ref{main theorem nice formulation}}

Starting with a locally free sheaf $\mathcal{F} \in coh_{X,T}$ the composition of functors $\phi \cdot {\bf K}\cdot \psi$ constructs a complex $\phi \cdot {\bf K}\cdot \psi(\mathcal{F})$ which is a resolution of the shifted sheaf $\omega _X[n]\otimes _{\mathcal{O}_X}\mathcal{F}$. This is the $T$-equivariant Cousin resolution
$$\mathcal{\omega} _X[n]\otimes _{\mathcal{O}_X}\mathcal{F} \to Cousin(\omega _X[n]\otimes _{\mathcal{O}_X}\mathcal{F})$$ as defined in [Kempf] with respect to the filtration of $X$ by closed subsets
$$X=Z_0\supset Z_1\supset \cdots$$
where
$$Z_i:=\coprod _{\text{codim}({\bf o})\geq i}{\bf o}$$
(See Appendix).

\section{Appendix. Local cohomology and Cousin complex}

We recall the Cousin complex in the equivariant setting according to \cite{Kempf}. Let $X$ be a smooth $T$-toric variety.

Let $W\subset Z\subset X$ be closed subsets. We have the left exact functor of
"sections supported on $Z$"
$$\underline{\Gamma}_Z(-):Qcoh_X\to Qcoh_X$$
and its right derived functors $\mathcal{H}_Z^j$. Similarly for $W$. Define the functor $\underline{\Gamma}_{Z/W}(-)$ as the quotient
$$\underline{\Gamma}_{Z/W}(-):=\underline{\Gamma}_{Z}(-)/
\underline{\Gamma}_{W}(-)$$
This functor is not necessarily left exact, but still we consider its right derived functors
$$\mathcal{H}^j_Z(-):={\bf R}^j\underline{\Gamma}_{Z/W}(-):Qcoh_X\to Qcoh_X$$

Given closed subsets $Z_1\supseteq Z_2\supseteq Z_3$ in $X$ and $\mathcal{F}\in Qcoh_X$ one has the long exact sequence in $Qcoh_X$
\begin{equation}\label{long ex of loc coh}
0\to \mathcal{H}^0_{Z_2/Z_3}(\mathcal{F})\to
\mathcal{H}^0_{Z_1/Z_3}(\mathcal{F})\to \mathcal{H}^0_{Z_1/Z_2}(\mathcal{F})\stackrel{d}{\to }\mathcal{H}^1_{Z_2/Z_3}(\mathcal{F})\to ...
\end{equation}

Consider a decreasing filtration of the space $X$ by closed subsets
\begin{equation} \label{decr filtr}
X=Z_0\supseteq Z_1\supseteq ...
\end{equation}
Then for any $\mathcal{F}\in Qcoh_X$ the connecting homomorphisms $d$ in \eqref{long ex of loc coh} give rise to the (local) Cousin complex
\begin{equation}\label{loc cous compl}
Cousin(\mathcal{F}):\quad  \mathcal{H}^0_{Z_0/Z_1}(\mathcal{F})\stackrel{d_0}{\to}
\mathcal{H}^1_{Z_1/Z_2}(\mathcal{F})\stackrel{d_1}{\to}
\mathcal{H}^2_{Z_2/Z_3}(\mathcal{F})\stackrel{d_2}{\to}\cdots
\end{equation}
which comes with an augmentation map $\mathcal{F}\stackrel{e}{\to}Cousin(\mathcal{F})$.

Let us now choose the closed subsets as follows
$$Z_i:=\coprod _{\text{codim}({\bf o})\geq i}{\bf o}$$
The following theorem summarizes the properties of the complex $Cousin(\mathcal{F})$ that are relevant for us.

\begin{thm} Assume that $\mathcal{F}$ is a locally free coherent sheaf on $X$.

(1) The augmented Cousin complex $\mathcal{F}\stackrel{e}{\to}Cousin(\mathcal{F})$ is exact.

(2) Let $i_{{\bf o}}:{\bf o}\hookrightarrow X$ be the locally closed inclusion of the orbit ${\bf o}$. Then
\begin{equation}\label{dir sum over orbits}
\mathcal{H}^i_{Z_i/Z_{i+1}}(\mathcal{F})=\bigoplus _{\text{codim}({\bf o})=i}i_{{\bf o}*}\mathcal{H}^i_{{\bf o}}(\mathcal{F}\vert _{st({\bf o})})
\end{equation}
and $\mathcal{H}^j_{Z_i/Z_{i+1}}(\mathcal{F})=0$ for $j\neq i$.


Now assume in addition that $\mathcal{F}\in coh_{X,T}$.

(3) The augmented Cousin complex $\mathcal{F}\stackrel{e}{\to}Cousin(\mathcal{F})$ is a complex in $Qcoh_{X,T}$. Moreover, the sheaves $\mathcal{H}^i_{Z_i/Z_{i+1}}(\mathcal{F})\in Qcoh_{X,T}$ are injective, hence $Cousin(\mathcal{F})$ is an injective resolution of $\mathcal{F}$.
\end{thm}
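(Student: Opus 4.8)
The plan is to obtain the three assertions from the general theory of Cousin complexes of \cite{Kempf} once the toric filtration $\{Z_i\}$ has been described combinatorially, and then to superimpose the equivariant bookkeeping by hand. First I would record that, since $\overline{{\bf o}_\tau}=\coprod_{\sigma\supset\tau}{\bf o}_\sigma$ and $X$ is smooth (so every cone of dimension $\geq p$ possesses a face of dimension $p$), the set $Z_p=\coprod_{\text{codim}({\bf o})\geq p}{\bf o}=\bigcup_{\text{codim}({\bf o})=p}\overline{{\bf o}}$ is a $T$-invariant closed subvariety with $Z_p\setminus Z_{p+1}=\coprod_{\text{codim}({\bf o})=p}{\bf o}$. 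In a maximal affine toric chart $U\cong(\bbC^*)^{n-d}\times\bbA^{d}$ with coordinates $x_1^{\pm},\dots,x_{n-d}^{\pm},y_1,\dots,y_d$, the orbit attached to the face indexed by $J\subseteq\{1,\dots,d\}$ is $\{y_j=0\ (j\in J),\ y_j\neq0\ (j\notin J)\}$, so $Z_p\cap U=W_p\times(\bbC^*)^{n-d}$, where $W_p\subseteq\bbA^{d}$ is the locus on which at least $p$ of the coordinates vanish; moreover a codimension-$p$ orbit ${\bf o}$ is closed in $st({\bf o})\cong(\bbC^*)^{n-p}\times\bbA^{p}$ and is cut out there by the regular sequence $y_1,\dots,y_p$.

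To prove (2), since $\mathcal{F}$ is locally free I would reduce the computation over $U$ (the torus factor being harmless, as local cohomology commutes with the flat extension $\bbC[y]\hookrightarrow\bbC[y][x^{\pm}]$) to $\mathcal{H}^j_{W_p/W_{p+1}}(\mathcal{O}_{\bbA^{d}})$. The key input is that $W_p$ is Cohen--Macaulay of pure codimension $p$ --- its Stanley--Reisner complex is the $(d-p-1)$-skeleton of the simplex on $d$ vertices, which is Cohen--Macaulay --- so $\mathcal{H}^j_{W_p}(\mathcal{O})=0$ for $j\neq p$; substituting this into the long exact sequence \eqref{long ex of loc coh} of the pair $(W_p,W_{p+1})$ gives $\mathcal{H}^j_{W_p/W_{p+1}}(\mathcal{O})=0$ for $j\neq p$, hence $\mathcal{H}^j_{Z_p/Z_{p+1}}(\mathcal{F})=0$ for $j\neq p$ globally. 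For the decomposition, on $U':=X\setminus Z_{p+1}$ the subset $Z_p\cap U'$ is a disjoint union of codimension-$p$ orbits, each closed and smooth in $U'$, so $\mathcal{H}^p_{Z_p/Z_{p+1}}(\mathcal{F})|_{U'}=\bigoplus_{\text{codim}({\bf o})=p}\mathcal{H}^p_{{\bf o}}(\mathcal{F}|_{U'})$; there is a natural map to $\mathcal{H}^p_{Z_p/Z_{p+1}}(\mathcal{F})$ from the direct sum of the pushforwards to $X$ of the sheaves $\mathcal{H}^p_{{\bf o}}(\mathcal{F}|_{st({\bf o})})$, which is an isomorphism over $U'$ and is verified to be an isomorphism over each $U$ by the same affine-space bookkeeping (each summand is supported on $\overline{{\bf o}}$, and the required identity of graded $\bbC[y]$-modules is direct). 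Part (1) would then follow from the spectral sequence of the finite filtration $X=Z_0\supseteq Z_1\supseteq\cdots\supseteq Z_{n+1}=\emptyset$ assembled from the sequences \eqref{long ex of loc coh}: it has $E_1^{p,q}=\mathcal{H}^{p+q}_{Z_p/Z_{p+1}}(\mathcal{F})$ and converges to $\mathcal{H}^{p+q}_{Z_0}(\mathcal{F})=\mathcal{H}^{p+q}_X(\mathcal{F})$, which equals $\mathcal{F}$ for $p+q=0$ and vanishes otherwise. By (2) the $E_1$-page is concentrated in the row $q=0$, where $(E_1^{\bullet,0},d_1)$ is exactly $Cousin(\mathcal{F})$; degeneration together with the shape of the abutment force $H^0(Cousin(\mathcal{F}))=\mathcal{F}$ (via the edge map, which is the augmentation $e$) and $H^p(Cousin(\mathcal{F}))=0$ for $p>0$, that is, exactness of the augmented complex.

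For (3), equivariance is formal: each $Z_p$ is $T$-invariant, so $\underline{\Gamma}_{Z_p}$ and $\underline{\Gamma}_{Z_p/Z_{p+1}}$ preserve $Qcoh_{X,T}$, which has enough injectives; hence their derived functors and the connecting maps all live in $Qcoh_{X,T}$, and exactness of the augmented complex there is part (1) applied to the underlying sheaves. For the injectivity of $\mathcal{H}^p_{Z_p/Z_{p+1}}(\mathcal{F})$ in $Qcoh_{X,T}$, I would note that the pushforward $Qcoh_{st({\bf o}),T}\to Qcoh_{X,T}$ along the open immersion $st({\bf o})\hookrightarrow X$ has the exact left adjoint given by restriction, hence preserves injectives; so by (2) it suffices that each $\mathcal{H}^p_{{\bf o}}(\mathcal{F}|_{st({\bf o})})$ be injective in $Qcoh_{st({\bf o}),T}\simeq A(st({\bf o}))\text{-Mod}$. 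Under this equivalence $\mathcal{F}|_{st({\bf o})}$ is a free graded module, and the stable Koszul complex on $y_1,\dots,y_p$ identifies $\mathcal{H}^p_{{\bf o}}(\mathcal{F}|_{st({\bf o})})$, up to a grading shift, with a direct sum of copies of the graded dual $A(st({\bf o}))^\vee$ (equivalently, with a twist of $(\mathcal{F}|_{st({\bf o})})^\vee$ in the sense of Section \ref{cosh and kos dual}); since $A(st({\bf o}))$ is free over itself as a graded module and has finite-dimensional graded pieces, $A(st({\bf o}))^\vee$ is injective. Thus each term of $Cousin(\mathcal{F})$ is injective in $Qcoh_{X,T}$ and, by (1), the augmentation makes it an injective resolution of $\mathcal{F}$.

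The step I expect to be the main obstacle is the vanishing $\mathcal{H}^j_{Z_p/Z_{p+1}}(\mathcal{F})=0$ for $j\neq p$: because $\{Z_i\}$ is strictly coarser than the filtration of $X$ by codimension of points, this does not follow merely from $\mathcal{F}$ being Cohen--Macaulay, but uses both that the loci $W_p$ are Cohen--Macaulay of pure codimension $p$ and that the local cohomology of a regular ring along a Cohen--Macaulay subscheme of pure codimension $p$ is concentrated in cohomological degree $p$; the remaining ingredients --- the spectral sequence for (1), and the equivariance and injectivity statements in (3) --- are routine, and several of them may alternatively be quoted directly from \cite{Kempf}.
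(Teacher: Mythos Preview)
Your overall approach is sound and considerably more explicit than the paper's, which simply cites \cite{Kempf} at every step: Theorem 10.9 for (1) (using that $\mathcal{F}$ is Cohen--Macaulay, that $\operatorname{codim} Z_i\geq i$, and that each $Z_i\setminus Z_{i+1}$ is affine), Lemma 8.5(e) together with the condition (L.V.) for the direct-sum decomposition in (2), and Theorem 11.6(e) for (3). Your spectral-sequence derivation of (1) from (2) and your direct injectivity argument for (3) are both fine, modulo one small remark: the injectivity of $A(st({\bf o}))^\vee$ in \emph{all} graded $A(st({\bf o}))$-modules is not a formal consequence of ``$A$ is free over itself''; what you need is the graded Matlis statement that $A^\vee$ is the graded injective hull of the residue field, which is standard once noted.

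The one genuine soft spot is your vanishing argument for (2). You claim $\mathcal{H}^j_{W_p}(\mathcal{O}_{\bbA^d})=0$ for $j\neq p$ because $W_p$ is Cohen--Macaulay of pure codimension $p$, and in your final paragraph you elevate this to the principle ``local cohomology of a regular ring along a Cohen--Macaulay subscheme of pure codimension $p$ is concentrated in degree $p$''. That implication is \emph{not} a standard theorem and is delicate in general; what rescues you here is that $W_p$ is cut out by a squarefree monomial ideal, and for such ideals Lyubeznik's formula $\operatorname{cd}(I)=\operatorname{pd}(R/\sqrt{I})$ combined with Auslander--Buchsbaum does give $\operatorname{cd}=\operatorname{ht}$ when $R/I$ is CM. You should say this explicitly rather than invoke the general principle. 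By contrast, the paper avoids the singular variety $W_p$ altogether: it observes that on $U_{i+1}:=X\setminus Z_{i+1}$ the closed set $Z_i\cap U_{i+1}$ is a disjoint union of smooth codimension-$i$ orbits, so $\mathcal{H}^j_{Z_i\setminus Z_{i+1}}(\mathcal{F}\vert_{U_{i+1}})=0$ for $j\neq i$ immediately, and then Kempf's Lemma 8.5(e) (granted the condition (L.V.)) identifies $\mathcal{H}^j_{Z_i/Z_{i+1}}(\mathcal{F})$ with the pushforward from $U_{i+1}$. That route is shorter and bypasses any Stanley--Reisner or cohomological-dimension input.
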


\begin{proof} (1) The following assumptions are satisfied: (a) The sheaf $\mathcal{F}$ is a Cohen-Macauley $\mathcal{O}_X$-module, (b) $\text{codim}(Z_i)\geq i$ for all $i$, (c) the variety
$$Z_i-Z_{i+1}=\coprod _{\text{codim}({\bf o})=i}{\bf o}$$
is affine for all $i$. Hence the augmented Cousin complex $\mathcal{F}\stackrel{e}{\to}Cousin(\mathcal{F})$ is exact by Theorem 10.9 in [Kempf].

(2) Let $U_i:=X-Z_i$. Since $\mathcal{F}$ is locally free, the local cohomology sheaves $\mathcal{H}^j_{Z_i-Z_{i+1}}(\mathcal{F}\vert _{U_i})$ are zero if $j\neq i$ and $\mathcal{H}^i_{Z_i-Z_{i+1}}(\mathcal{F}\vert _{U_i})\in Qcoh_{U_i}$ is supported on $Z_i-Z_{i+1}$. Moreover
$$\mathcal{H}^i_{Z_i-Z_{i+1}}(\mathcal{F}\vert _{U_i})=\bigoplus _{\text{codim}{\bf o}=i}\mathcal{H}^i_{{\bf o}}(\mathcal{F}\vert _{st({\bf o})})$$

The condition (L.V.) on p. 362 of \cite{Kempf} is satisfied. Hence by Lemma 8.5 (e) in loc. cit. the equality \eqref{dir sum over orbits} holds.

(3) This follows from Theorem 11.6(e) in \cite{Kempf}.
\end{proof}

\end{document}